\renewcommand{\vec}[1]{{\mbox{\mathversion{bold}\ensuremath{#1}}}}
\newtheoremstyle{nicetheoremstyle}% name of the style to be used
  {2\topsep}% measure of space to leave above the theorem. E.g.: 3pt
  {2\topsep}% measure of space to leave below the theorem. E.g.: 3pt
  {\itshape}% name of font to use in the body of the theorem
  {}% measure of space to indent
  {\large\scshape}% name of head font
  {:}% punctuation between head and body
  {\topsep}% space after theorem head; " " = normal interword space
  {}% Manually specify head
\newtheoremstyle{nicedefinitionstyle}% name of the style to be used
  {\topsep}% measure of space to leave above the theorem. E.g.: 3pt
  {\topsep}% measure of space to leave below the theorem. E.g.: 3pt
  {\itshape}% name of font to use in the body of the theorem
  {}% measure of space to indent
  {\scshape}% name of head font
  {:}% punctuation between head and body
  {\topsep}% space after theorem head; " " = normal interword space
  {}% Manually specify head
\theoremstyle{nicetheoremstyle}
\newtheorem{theorem}{Theorem}
\theoremstyle{nicedefinitionstyle}
\newtheorem{definition}[theorem]{Definition}
\newtheorem{proposition}[theorem]{Proposition}
\newtheorem{lemma}[theorem]{Lemma}
\title{Sparse Error Localization in Complex Dynamic Networks}
\author{Dominik Kahl, Andreas Weber, Maik Kschischo}
\begin{document}

\maketitle

\begin{abstract}
	Understanding the dynamics of complex systems is a central task in many different areas
	ranging form biology via epidemics to economics and engineering. Unexpected behaviour 
	of dynamic systems or even systems failure is sometimes difficult to comprehend. 
	Such unexpected dynamics 
	can be caused by systematic model errors, unknown inputs 
	from the environment and systems faults. Localizing the root cause of these errors or faults and 
	reconstructing their dynamics is only possible if the measured outputs of the system are 
	sufficiently informative. 
	Here, we present a mathematical theory for the measurements required to localize the position 
	of error sources in large dynamic networks. We assume, that faults or errors occur at 
	a limited number of positions in the network. This sparsity assumption facilitates the  
	accurate reconstruction of the dynamic time courses of the errors by solving a 
	convex optimal control problem. For cases, where 
	the sensor measurements are not sufficiently informative to pinpoint the error position 
	exactly,
	we provide methods to restrict the error location to a smaller subset of network nodes. 
	We also suggest strategies to  efficiently select additional measurements for narrowing down the 
	error location. 
\end{abstract}

\clearpage\begin{Large} \textbf{E}\end{Large}rrors in complex dynamic systems can be difficult to find. The presence of an error or a fault
is detected by the system's unexpected behaviour. Localizing the root cause
of the error can, however, be much harder or even impossible. Modern technical devices like cars
or planes have dedicated sensors for fault diagnosis or fault isolation. These designs 
are based on fault detection theory \cite{isermann_fault_diagnosis_2011, blanke_diagnosis_2016}, 
an important research field in control theory. During the design process, engineers specify 
a set of components whose failure or malfunction must be detected and isolated. To achieve the detectability 
and isolationability of these prespecified faults, they place sensors with output patterns
uniquely indicating a specific type of error. However, there is a trade-off: Increasing the number
of errors which can be uniquely isolated requires a larger number of sensors. Indeed, one can show from 
invertibility theory, that the number of error signals which can uniquely be distinguished is never
smaller than the number of sensors required~\cite{silverman_inversion_1969, sain_invertibility_1969, fliess_note_1986, wey_rank_1998, kahl_structural_2019}. 

Finding the root cause of errors or faults is also a crucial task in the analysis of naturally evolved systems such as ecological food webs, biochemical reaction networks or infectious disease models. For example, a protein biomarker
like the prostate-specific antigen (PSA) can indicate a disease, here prostate cancer. But, there
can be other causes for increased levels of PSA including older age and hyperplasia~\cite{patel_new_2017}.  A reliable 
diagnosis requires additional tests, or, stated in engineering terms, the isolation of the fault requires more
sensors. 

An important difference between engineered and natural or evolved systems is the degree of uncertainty 
about the interactions between the system's state variables. The couplings between the states of a biochemical reaction network or an ecological food web are usually only partially known. These uncertainties 
make the development of a useful mathematical model difficult; missing, spurious or misspecified interactions generate structural model errors. Moreover, real world dynamic systems are open, i.e. they receive unknown inputs from their environment. Together, these uncertainties often result in 
incorrect model predictions. Detection of genuine faults is even harder, if model errors, 
unknown inputs and faults are interfering. Also, the engineering approach of specifying certain 
faults in advance is difficult to realize in the case of an incomplete or erroneous model.  

In this paper, we provide a mathematical theory for the localization and reconstruction of 
\textit{sparse} faults  and model errors in complex dynamic networks described by ordinary differential equations (ODEs). Sparsity means here, that there is a maximum number of state variables (state nodes) $k$ affected by an error. Typically, $k$ is much smaller than the total number of state nodes $N$. In contrast to fault isolation approaches~\cite{isermann_fault_diagnosis_2011, blanke_diagnosis_2016}, we do not require the a priori specification of certain types of faults, but we allow for the possibility that each state node in the network 
can potentially be targeted by errors (or faults). The sparse error assumption is often realistic in both the model error and the fault detection context. Faults often affect only a small number of nodes in the network, because the simultaneous failure of several components in a system is unlikely to occur spontaneously. For example, a hardware error or a network failure usually occurs at one or two points in the system, unless the system has been deliberately attacked simultaneously at several different positions. Similarly, gene mutations often affect a restricted number
of proteins in a larger signal transduction or gene regulatory network. In the context of model error localization and reconstruction, the sparsity assumption implies that the model is incorrect only at a limited number of positions or, alternatively, that small inaccuracies are ignored and that we focus only on the few (less than $k$) state variables with grossly misspecified governing equations.
Throughout this paper, we will exploit the fact that faults, model errors and interactions with the environment can 
all mathematically be represented as unknown inputs to the system~\cite{mook_minimum_1987, moreno_observabilitydetectability_2012,engelhardt_learning_2016, engelhardt_bayesian_2017, kahl_structural_2019}. Thus, we
use model error, fault and unknown input as synonyms. 

An example for sparse error localization is provided in Fig.~\ref{fig:Fig1}. The nodes of the network in~Fig.~\ref{fig:Fig1}(a) represent the $N=30$ state variables $\vec{x} = (x_1,\ldots, x_{30})^T$ of the dynamic system and the edges their interactions. The squares indicate the subset of $P=10$ states which are directly measured with output time courses  $\vec{y}(t)=\left( y_1 (t), \ldots, y_{10} (t)\right)$ plotted in 
Fig.~\ref{fig:Fig1}(b). In our simulation, we randomly chose state node $x_6$  to be affected by an unknown input $$\vec{w}^* (t)=(0,\ldots,0,w_6^*(t),0,\ldots,0)^T \, ,$$ 
as highlighted by the wiggly arrow in Fig.~\ref{fig:Fig1}(a). In reality, the location of this error would be unknown and only 
the deviations of the output measurements $\vec{y}(t)$ from the expected behaviour would indicate the presence of an error somewhere in the network. Our first result in this paper is a criterion for the 
localizability of an unknown input in nonlinear systems from output measurements. Based on this, 
we can decide that a single unknown input ($k=1$) can be localized 
and reconstructed from the output data $\vec{y}(t)$. The estimate $\hat{\vec{w}}(t)$ in Fig.~\ref{fig:Fig1}(c) provides a satisfying reconstruction of the  true input $\hat{\vec{w}}(t)$.
This illustrates our second result: An optimization criterion for the reconstruction of $k$-sparse model errors and conditions guaranteeing the accuracy of the reconstruction for linear systems. 

In many real world systems, there are practical restrictions on the number or the location of measurements. This restricted set of sensor nodes might render the system non-localizable for a given sparsity $k$, i.e. the state nodes (at most $k$) affected by the error or fault cannot exactly be determined from the available output. For example, it is often difficult to
measure the dynamic time course of a large number of proteins and other molecules in a biochemical reaction network
and the root cause for observed errors (e.g. diseases) might be impossible to find. Then,  it might still be useful to at least narrow down the 
location of the errors to subsets of states. As a further result, we present here a coherence measure which quantifies, how difficult it is to distinguish different states as root causes of a $k$-sparse error. This coherence measure can be used to cluster states 
into indistinguishable subsets and to subsequently identify those subsets targeted by model errors. 

Once the subsets of the states targeted by errors are known, additional output data are needed to further 
narrow down the exact error location. We provide an efficient sensor node placement strategy for reducing 
the uncertainty about the error location. In combination with coherence based output clustering,
this sensor selection strategy can be iterated to restrict the exact error location 
to smaller and smaller subsets until the exact position is isolated. 
	\begin{figure*}
		\centering
		\includegraphics[width=\textwidth]{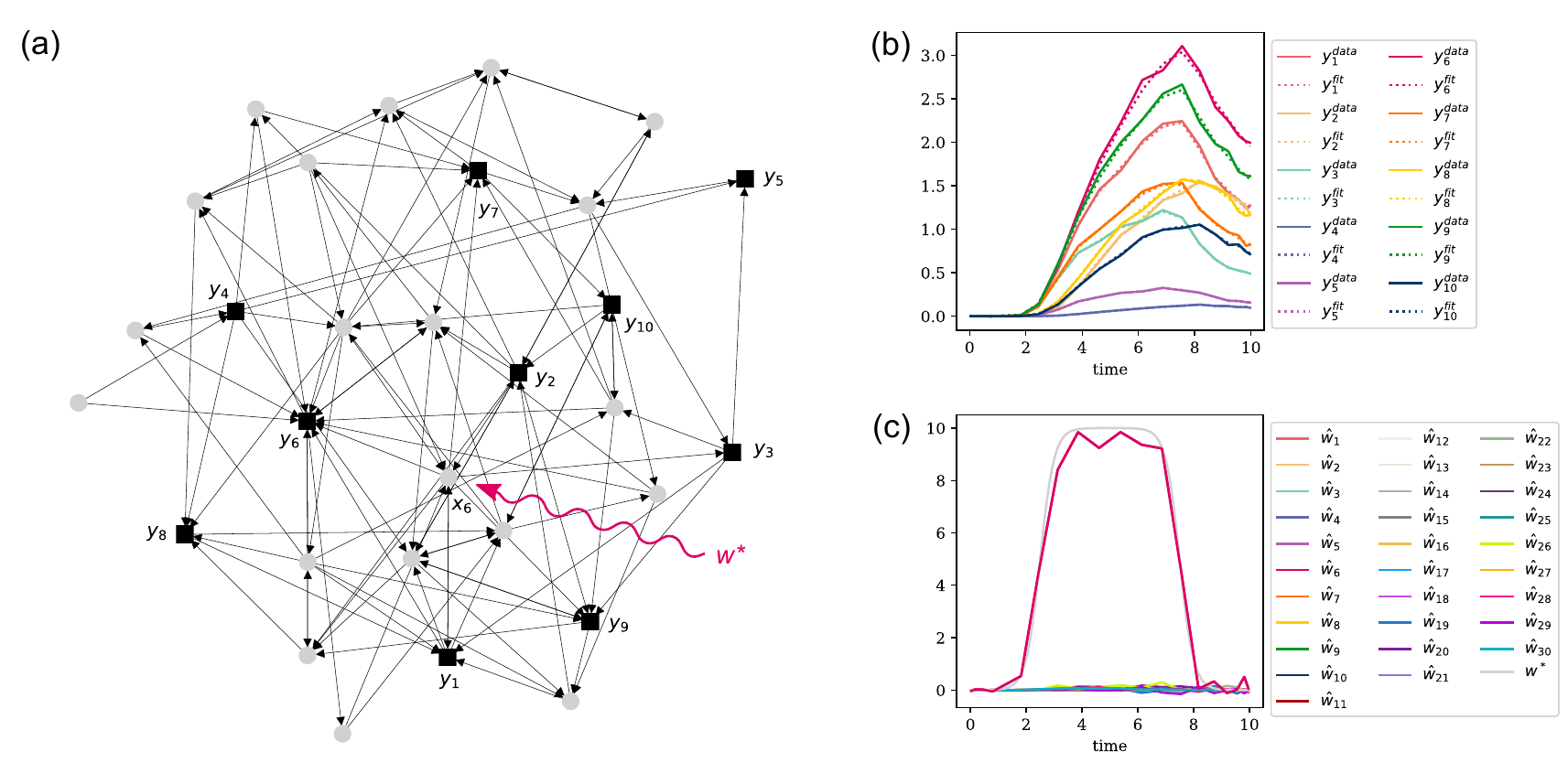}
		\caption{Reconstruction of a sparse unknown input.  (a) The influence graph of 
			a linear dynamic system with $N=30$ states. The nodes correspond to the state variables and 
			the edges indicate their interactions. The simulated error signal $\vec{w}^* (t)=(0,\ldots,
			0,w_6^*(t),0,\ldots,0)^T$ targets the state variable $x_6$. The squares indicate the $P=10$
			sensor nodes  providing the output $\vec{y}=(y_1,\ldots, y_{10})^T$ (b) 
			The  measured output data 
			$\vec{y}^{\text{data}}(t)=(y^{\text{data}}_1(t),\ldots, y^{\text{data}}_{10}(t))^T$ (solid lines) can be fitted 
			(dashed lines) by the output $\hat{\vec{y}}(t)$
			corresponding to the solution $\hat{\vec{w}}(t)$ (see (c)) of the 
			convex optimal control problem in~\eqref{eq:optimization}. (c) This estimate $\hat{\vec{w}}(t)$ 
			simultaneously reconstructs the true unknown input $\vec{w}^* (t)$. One can see, that 
			among the thirty inputs the node $i=6$ ($w_6$) was localized as the root cause of the error.
			}
		\label{fig:Fig1}
	\end{figure*}
\section*{Background}
\subsection*{Open dynamic systems with errors and faults}
We consider input-output systems of the form 
	\begin{equation}
		\begin{aligned} \label{eq:DynamicSystem}
		\dot{\vec{x}}(t) &= \vec{f}(\vec{x}(t)) + \vec{w}(t) \\
		\vec{x}(0) &= \vec{x}_0	 \\
		\vec{y}(t) &= \vec{c}(\vec{x}(t))
		\end{aligned}
	\end{equation}
where $\vec{x}(t) \subseteq \mathbb{R}^N$ denotes the state of the system at time $t\in [0,T]$ and $\vec{x}_0\in\mathbb{R}^N$ is the initial state. The vector field $\vec{f}$ encodes the model of the system and is assumed to be Lipshitz. The function $\vec{c}:\mathbb{R}^N \to \mathbb{R}^P$  describes the measurement process and maps the system state $\vec{x}$ to the directly
observable output~$\vec{y}$. 

Model errors or faults are represented as unknown input functions $$\vec{w}:[0,T] \to \mathbb{R}^N \, .$$ 
This ansatz incorporates all types of errors, including missing and wrongly specified interactions, parameter errors~\cite{mook_minimum_1987, kahm_potassium_2012, schelker_comprehensive_2012, engelhardt_learning_2016, engelhardt_bayesian_2017, tsiantis_optimality_2018} as well as faults~\cite{isermann_fault_diagnosis_2011, blanke_diagnosis_2016} and  unobserved inputs from the environment~\cite{kahl_structural_2019}. 

The system in~\eqref{eq:DynamicSystem} can be seen as an input-output map $\Phi:\mathcal{W}\to \mathcal{Y}, \vec{w}\mapsto \vec{y}$. The input space $\mathcal{W}=\mathcal{W}_1 \oplus \ldots \oplus \mathcal{W}_N$ is assumed to be the direct sum of suitable (see below) function spaces $\mathcal{W}_i,\,i=1,\ldots,N$. For zero errors $\vec{w} \equiv \vec{0}$ (i.e. $\vec{w}(t) = \vec{0} \, \forall t\in [0,T]$) we call the system~\eqref{eq:DynamicSystem} a closed dynamic system.  Please note that we do not exclude the possibility of known inputs for control, but we suppress them from our notation. 

An error or fault can be detected from the residual 
\begin{equation}
	\vec{r}(t) := \vec{y}^\text{data}(t)-\vec{y}^{(0)}(t)
\end{equation}
between the measured output data $\vec{y}^\text{data}(t)$ and the output $\vec{y}^{(0)}(t)=\Phi (\vec{0}) (t)$ of the closed system. To infer the model error $\vec{w}(t)$ we have to solve the equation
	\begin{equation}
		\Phi (\vec{w}) = \vec{y}^\text{data}   \label{eq:DataProblem} 
	\end{equation}
for $\vec{w}$. In general, there can be several solutions to the problem~\eqref{eq:DataProblem}, unless we either measure the full state of the system or we restrict the set of unknown inputs $\vec{w}$. 
In fault detection applications~\cite{isermann_fault_diagnosis_2011, blanke_diagnosis_2016}, the restriction is given by prior assumptions about the states which are targeted by errors. We will 
use a sparsity assumption instead. For both cases, we need some notation: Let $\mathcal{N}=\{1,2,\ldots, N\}$ be the index set of the $N$ state variables and $S \subseteq \mathcal{N}$ be a subset with complement $S^c = \mathcal{N}\setminus S$. By $\vec{w}_{S} (t)$ we indicate the 
vector function obtained from $\vec{w}(t)$ by  setting the entries $(\vec{w}_S)_i$ with $i\in S^c$ to the zero function. If $S$ is of minimal cardinality and $\vec{w}_{S} = \vec{w}$
we call $S$ the \textit{support} of $\vec{w}$. The corresponding restriction on the input space is defined via 
\begin{equation}
	\mathcal{W}_S := \left\{\vec{w}\in \mathcal{W}\left| \, \text{supp}\,\vec{w} \subseteq S \right.\right \}\,.
\end{equation} 
Thus, $S$ characterizes the states $x_i$ with $i\in S$ which can \textit{potentially} be
affected by a non-zero unknown input $w_i$. We will also refer to $S$ as the set of input or source nodes. The restricted input-output map $\Phi_S:\mathcal{W}_S \to \mathcal{Y}$ is again given by~\eqref{eq:DynamicSystem}, but all input components $\vec{w}_i$ with $i\not \in S$ are restricted to be 
zero functions.  

Now, we can formally define invertibility \cite{silverman_inversion_1969,sain_invertibility_1969}:
	%
	% Definition
	%
	\begin{definition}\label{def:invertible} 
		The system~\eqref{eq:DynamicSystem} with input set $S$ and input-output map $\Phi$ is called 
		\textbf{invertible}, if for two 
		different solutions $\vec{w}^{(1)},\vec{w}^{(2)} \in \mathcal{W}_S$ of \eqref{eq:DataProblem} and for any data set 
		$\vec{y}^\text{data}:[0,T]\to \mathbb{R}^P$ we have 
		\begin{equation}	
			\vec{w}^{(1)} (t) -  \vec{w}^{(2)} (t)=\vec{0}
		\end{equation}					
		almost everywhere in $[0,T]$.		
	\end{definition}
In other words, invertibility guarantees that \eqref{eq:DataProblem} with an input set $S$ has only one solution $\vec{w}^*$ (up to differences of measure zero), which corresponds to the true model error. In the following, we mark this true model error with an asterisk, while $\vec{w}$ without asterisk denotes an indeterminate input function.	
		
\subsection*{Structural Invertibility and Independence of Input Nodes}
There are several algebraic or geometric conditions for invertibility~\cite{silverman_inversion_1969,sain_invertibility_1969,fliess_note_1986,fliess_nonlinear_1987,basile_new_1973}, which are, however, difficult to test for large systems and require exact knowledge of the systems equations~\eqref{eq:DynamicSystem}, including all the parameters. \emph{Structural invertibility} of a system 
is prerequisite for its invertibility and can be decided from a graphical 
criterion \cite{wey_rank_1998}, see
also Theorem~\ref{theorem:structuralinvertibility} below. Before, we define
the influence graph~(see e.g.~\cite{liu_control_2016})
	%
	% Definition
	%
	\begin{definition} \label{def:influencegraph}	
		The \textbf{influence graph}
		$g=(\mathcal{N},\mathcal{E})$ of the system \eqref{eq:DynamicSystem} is a digraph, where the set 
		of nodes~$\mathcal{N}=\{1,2,\ldots,N\}$
		represents the $N$ state variables \\$\vec{x}=(x_1,\ldots,x_N)$ and the set of directed edges
		$\mathcal{E}=\{i_1 \to l_1, i_2 \to l_2,\ldots\}$ represents the interactions between those
		states in the following way: There is a directed edge $i\to l$ for each pair of state nodes
		$i,l\in \mathcal{N}$ if and only if $\frac{\partial f_l}{\partial x_i} (\vec{x}) \ne 0$ for some
		$\vec{x}$ in the state space $\mathcal{X}$.  		
	\end{definition}

In addition to the set of input nodes $S \subseteq \mathcal{N}$ we define the output nodes $Z\subseteq \mathcal{N}$ of the system in~\eqref{eq:DynamicSystem}. The latter are determined by the measurement function $\vec{c}$. Without restriction of generality we assume in the following that a subset $Z\subseteq \{1,2,\ldots,N \}$ of $P$  state nodes are sensor nodes, i.e.~they can directly be measured, which corresponds to  $c_i(\vec{x})=x_i$ for $i\in Z$. All states $x_l$ with $l \not \in Z$ are not directly monitored. 
   
A necessary criterion for structural invertibility is given by the following graphical condition \cite{wey_rank_1998}:
	%
	% Theorem
	%
	\begin{theorem}\label{theorem:structuralinvertibility}
		Let $g=(\mathcal{N}, \mathcal{E})$ be an influence graph and $S, Z \subseteq \mathcal{N}$  be 
		known input and output node sets with cardinality $M=\text{card}\,S$ and 
		$P=\text{card}\,Z$, respectively. If there 
		is a family of directed paths $\Pi=\{\pi_1,\ldots, \pi_M\}$ with the properties
		\begin{enumerate}
			\item each path $\pi_i$ starts in $S$ and terminates in $Z$,
			\item any two paths $\pi_i$ and $\pi_j$ with $i\neq j$ are node-disjoint,
		\end{enumerate}	
		then the system is structurally invertible.
		If such a family of paths exists, we say \textbf{$S$ is linked in $g$ into~$Z$}.
	\end{theorem}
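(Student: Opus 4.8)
The plan is to reduce structural invertibility to a generic rank condition on the system's transfer matrix and then to read that rank off combinatorially from the vertex-disjoint paths in $g$. Since structural invertibility is a property of the influence graph alone, I would work with the structured linearization of \eqref{eq:DynamicSystem} about a generic state: the triple $(A,B,C)$ in which $A = \partial \vec{f}/\partial \vec{x}$ has nonzero entries exactly the edge weights $a_{il} = \partial f_l/\partial x_i$ of $g$ (Definition~\ref{def:influencegraph}), $B$ is the $N\times M$ matrix whose columns are the standard basis vectors $\vec{e}_i$ for $i\in S$ (the input enters additively and is supported on $S$), and $C$ is the $P\times N$ matrix whose rows are $\vec{e}_j^T$ for $j\in Z$ (the outputs are the states in $Z$). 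Treating the nonzero entries of $A$ as free parameters, structural invertibility means invertibility for almost all such choices, which for the linear map is equivalent to the transfer matrix $G(s) = C(sI-A)^{-1}B$ having full column rank $M$ over the field of rational functions $\mathbb{R}(s)$, i.e.\ being left-invertible. The claim thus becomes: the linking condition forces $\mathrm{rank}_{\mathbb{R}(s)}\,G(s) = M$ for generic edge weights.

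Next I would connect this rank to paths. Expanding $(sI-A)^{-1} = \sum_{k\ge 0} s^{-k-1}A^k$ shows that the entry $G_{ji}(s)$ is a generating function over directed walks from an input node $i\in S$ to an output node $j\in Z$ in $g$, each walk weighted by the product of its edge weights $a_{il}$. I would then select the $M\times M$ submatrix of $G$ whose columns are indexed by $S$ and whose rows are indexed by the $M$ terminal nodes in $Z$ of the given family $\Pi=\{\pi_1,\dots,\pi_M\}$ (these terminal nodes are distinct because the paths are node-disjoint, and the startpoints exhaust $S$ for the same reason). By the Cauchy--Binet / signal-flow (Mason) expansion, the determinant of this submatrix is a signed sum of products of walk-generating functions; the contribution coming from matching each startpoint $s_k$ to its endpoint $z_k$ along $\pi_k$ is a monomial in the edge weights whose edge multiset is the disjoint union of the edges of $\pi_1,\dots,\pi_M$.

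The main obstacle is the non-cancellation step. Because the paths $\pi_i$ are vertex-disjoint, this monomial is squarefree and uses a unique set of edges, so it cannot be reproduced --- hence cannot be cancelled --- by any other permutation or any other choice of walks in the expansion. This is a Menger / Lindstr\"om--Gessel--Viennot-type argument, and it is the combinatorial heart of the result, as well as the precise reason disjointness (rather than mere connectivity) is required. Once it is in place, the selected $M\times M$ minor is a rational function that does not vanish identically in $s$ and in the generic parameters, so $\mathrm{rank}_{\mathbb{R}(s)}\,G(s)\ge M$; since $G$ has only $M$ columns, the rank equals $M$.

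Finally, full column rank $M$ over $\mathbb{R}(s)$ means $G(s)$ is left-invertible, so the input-output map $\vec{w}\mapsto\vec{y}$ is injective for generic edge weights. By Definition~\ref{def:invertible} this injectivity --- any two solutions of \eqref{eq:DataProblem} in $\mathcal{W}_S$ agree almost everywhere --- is exactly what structural invertibility asserts, and because it holds on a dense open set of parameter values consistent with the structure of $g$, the system is structurally invertible whenever $S$ is linked into $Z$.
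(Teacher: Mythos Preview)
The paper does not prove Theorem~\ref{theorem:structuralinvertibility}; it attributes the result to \cite{wey_rank_1998} and only discusses around it (the appendix remarks that the path condition is necessary for invertibility and that sufficiency for actual, as opposed to structural, invertibility is open). So there is no in-paper proof to compare against.

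Your argument is sound and is essentially the standard structural-systems proof one finds in that literature: reduce to the generic full-column-rank condition $\mathrm{Rank}\,C(sI-A)^{-1}B=M$ over $\mathbb{R}(s)$, expand entries as walk generating functions (this is exactly the paper's Proposition~\ref{prop:Transferfunction}), and exhibit a nonvanishing $M\times M$ minor by isolating the monomial contributed by the given vertex-disjoint family $\Pi$. The non-cancellation step you flag is indeed the crux, and your reasoning is correct: vertex-disjointness makes the edge monomial squarefree, and in the subgraph formed by the edges of $\Pi$ every non-terminal vertex has out-degree at most one, so the only way to decompose that edge set into $M$ walks from $S$ to the chosen terminals is $\Pi$ itself with the identity matching. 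Hence no other (permutation, walk-family) pair reproduces the monomial, the minor is nonzero in $\mathbb{R}(a_{ij})(s)$, and generic left-invertibility follows. One small point worth tightening in a full write-up: you should also note that the monomial carries a unique power of $s$ (namely $s^{-(M+\sum_i \text{len}\,\pi_i)}$), so cancellation would have to occur within a single $s$-coefficient, which is where the edge-multiset argument bites. With that caveat your sketch is complete and matches the spirit of the cited reference.
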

In the Appendix we discuss why we have the strong indication that this theorem provides also a sufficient criterion 
for structural invertibility up to some pathological cases.
	
A simple consequence of this theorem is that for an invertible system, the number $P$ of sensor nodes cannot be smaller than the number of input nodes $M$. This is the reason, why for fault detection the set of potentially identifiable error sources $S$ is selected in advance \cite{blanke_diagnosis_2016}. Without a priori restriction on the set of potential error sources, we would need to measure all states. Please note, that there are efficient algorithms to check, whether a system with a given influence graph $g$ and given input and sensor node set $S$ and $Z$ is invertible~(see \cite{kahl_structural_2019} for a concrete algorithm and references therein). 
\subsection*{Independence of Input Nodes}
If the path condition for invertibility in Theorem~\ref{theorem:structuralinvertibility} 
is fulfilled for a given triplet $(S,g,Z)$
we can decide, whether the
unknown inputs targeting~$S$ can be identified in the given graph $g$ using 
the set of sensor nodes $Z$. 
Without a priori knowledge about the model errors, however, the input set $S$ is unknown as well.
Therefore, we will consider the case that the input set $S$ is unknown in the results section. To this end, we define an independence structure on the union of all possible input sets:
	%
	% Definition
	%
	\begin{definition}\label{def:Gammoid}
		The triple $\Gamma:=(\mathcal{L}, g,Z)$ consisting of an influence graph $g=(\mathcal{N},\mathcal{E})$,
		and \textbf{input ground set} $\mathcal{L}\subseteq \mathcal{N}$, and 
		an output set $Z$ is called a \textbf{gammoid}. A subset $S\subseteq\mathcal{L}$ is understood as an 
		input set. 
		An input set $S$ is called
		\textbf{independent in $\Gamma$}, if $S$ is linked in $g$ into $Z$.
	\end{definition}
The notion of (linear) independence of vectors is well known from vector space theory. For finite
dimensional vector spaces, there is the rank-nullity theorem relating the dimension of the
vector space to the dimension of the null space of a linear map. The difference between the 
dimension of the vector space and the null space is called the rank of the map. 
The main advantage of the gammoid interpretation lies in the following rank-nullity concept:	
	%
	% Definition
	%
	 \begin{definition}\label{def:rank}
			Let $\Gamma=(\mathcal{L},g,Z)$ be a gammoid.
			\begin{enumerate}
				\item The \textbf{rank} of a set $S\subseteq \mathcal{L}$ is the size of the 
				largest independent subset $\tilde{S} \subseteq S$.
				\item The \textbf{nullity} is defined by the rank-nullity theorem 
				\begin{equation}
					\text{rank}\, S + \text{null}\, S = \text{card}\, S \, .
				\end{equation}
			\end{enumerate}
	\end{definition}
Note, that the equivalence of a consistent independence structure and a rank function 
(see definition \ref{def:rank} 1.) as well as the existence of a rank-nullity theorem (see definition \ref{def:rank} 2.) goes back to the early works on matroid theory \cite{whitney_abstract_1935}. It has 
already been shown \cite{perfect_applications_1968}, that the graph theoretical idea of linked sets (see definition \ref{def:Gammoid}) fulfils the axioms of matroid theory and therefore inherits its properties.	
The term gammoid for such a structure of linked sets was probably first used in \cite{pym_linking_1969} 
and since then investigated under this name, with slightly varying definitions. We find the formulation
above to be suitable for our purposes (see also the SI Appendix for more information about gammoids).  
\section*{\label{sec:Results}Results}
Here, we consider the localization problem, where the input set $S$ is unknown. However, we make a 
sparsity assumption by assuming that $S$ is a small subset of the ground set 
$\mathcal{L}\subseteq \mathcal{N}$. 
Depending on the prior information, the ground set can be the set of all state variables $\mathcal{N}$ or a subset. 

The sparsity assumption together with the definition of independence of input nodes in Definitions~\ref{def:Gammoid} and \ref{def:rank} can be exploited to generalize the idea of sparse sensing \cite{donoho_optimally_2003, candes_decoding_2005, donoho_compressed_2006,yonina_c_eldar_compressed_nodate}  to the solution of the dynamic problem \eqref{eq:DataProblem}. Sparse sensing for matrices is a well established field in signal and image 
processing (see e.g. \cite{yonina_c_eldar_compressed_nodate, foucart_mathematical_2013}). There are, however, 
some nontrivial differences: First, the input-output 
map $\Phi$ is not necessarily linear. Second, even if $\Phi$ is linear, it is a compact operator between 
infinite dimensional vector spaces and therefore the inverse $\Phi^{-1}$ is not continuous. 
This makes the inference of unknown inputs $\vec{w}$ an ill-posed problem, even if \eqref{eq:DataProblem} has 
a unique solution~\cite{nakamura_potthast_inverse_2015}.

\subsection*{Sparse Error Localization and Spark}
Definition \ref{def:Gammoid} enables as to transfer the concept of the \emph{spark} \cite{donoho_optimally_2003} to dynamic systems:
	%
	% Definition
	%
	\begin{definition}\label{def:spark}
			Let $\Gamma=(\mathcal{L},g,Z)$ be a gammoid. The \textbf{spark} 
			of~$\,\Gamma$ is defined as the largest integer, such that for each 
			input set $S\subseteq \mathcal{L}$
			\begin{equation}
				\text{card}\, S < \text{spark}\,\Gamma \,\Rightarrow \,
				\text{null}\, S = 0 \, .
			\end{equation}
	\end{definition}
Let's assume we have a given dynamic system with influence graph 
$g=(\mathcal{N}, \mathcal{E})$ and with an output set $Z\subset \mathcal{N}$. In addition, we haven chosen an input 
ground set $\mathcal{L}$. Together, we have the gammoid $\Gamma=(\mathcal{L},g,Z)$. 
The spark gives the smallest number of inputs that are dependent. 
As for the compressed sensing problem for matrices \cite{donoho_optimally_2003}, we can use the spark 
to check, under which condition a sparse solution is unique: 
	% Theorem
	%
	\begin{theorem}\label{theorem:spark2}			
			For an input $\vec{w}$ we denote $\Vert \vec{w} \Vert_0$ the number of 
			non-zero components.
			Assume $\vec{w}$ solves \eqref{eq:DataProblem}.
			If 
			\begin{equation}\label{eq:spark_localizability}
				\Vert \vec{w} \Vert_0 < \frac{\text{spark}\,\Gamma }{2} \, ,
			\end{equation}
			then $\vec{w}$ is the unique sparsest solution.
	\end{theorem}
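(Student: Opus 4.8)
The plan is to mirror the classical spark argument from compressed sensing, replacing the null space of a sensing matrix by the gammoid nullity of Definition~\ref{def:rank} and replacing linear uniqueness by invertibility of the restricted input--output map. First I would assume, aiming at the uniqueness claim, that besides the given $\vec{w}$ there exists a second solution $\vec{w}'$ of~\eqref{eq:DataProblem} that is at least as sparse, i.e. $\Vert \vec{w}' \Vert_0 \le \Vert \vec{w} \Vert_0$. Proving that $\vec{w}' = \vec{w}$ almost everywhere for \emph{every} such competitor simultaneously shows that $\vec{w}$ attains the minimal number of non-zero components and that this minimizer is unique, which is exactly the assertion ``unique sparsest solution''.

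The combinatorial core is to control the joint support. I would set $S := \text{supp}\,\vec{w} \cup \text{supp}\,\vec{w}'$ and use subadditivity of the cardinality of a union together with the hypothesis~\eqref{eq:spark_localizability} to get
\begin{equation}
  \text{card}\, S \le \Vert \vec{w} \Vert_0 + \Vert \vec{w}' \Vert_0 \le 2\,\Vert \vec{w} \Vert_0 < \text{spark}\,\Gamma \,.
\end{equation}
By Definition~\ref{def:spark}, any input set of cardinality strictly below $\text{spark}\,\Gamma$ has vanishing nullity, hence $\text{null}\, S = 0$. Through Definitions~\ref{def:Gammoid} and~\ref{def:rank} this says that $S$ is independent in $\Gamma$, i.e. $S$ is linked in $g$ into $Z$.

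Next I would invoke the bridge to the dynamics. Since $S$ is linked into $Z$, Theorem~\ref{theorem:structuralinvertibility} yields that the system restricted to the input set $S$ is invertible. Both candidate solutions have support contained in $S$, so $\vec{w}, \vec{w}' \in \mathcal{W}_S$, and both satisfy $\Phi(\vec{w}) = \Phi(\vec{w}') = \vec{y}^\text{data}$. Definition~\ref{def:invertible} then forces $\vec{w}(t) - \vec{w}'(t) = \vec{0}$ almost everywhere on $[0,T]$. As $\vec{w}'$ was an arbitrary solution no sparser than $\vec{w}$, there can be neither a strictly sparser solution nor a distinct equally sparse one, so $\vec{w}$ is the unique sparsest solution.

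The step I expect to be the genuine obstacle is the last bridge, not the counting. In finite-dimensional compressed sensing, ``independent columns'' instantly means ``trivial null space of the submatrix'', and uniqueness is immediate linear algebra; here independence is a purely graph-theoretic linking property, and I must cross from it to the analytic statement that $\Phi_S$ separates inputs. This rests on reading Theorem~\ref{theorem:structuralinvertibility} as delivering invertibility in the sense of Definition~\ref{def:invertible}, whereas that theorem only guarantees \emph{structural} invertibility; the argument therefore inherits the caveat (discussed in the Appendix) that structural invertibility coincides with invertibility outside pathological, non-generic parameter choices. The accompanying measure-zero and infinite-dimensional function-space bookkeeping is then routine.
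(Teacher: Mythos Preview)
Your proof is correct and follows essentially the same route as the paper: take the union of the two supports, bound its cardinality by $\text{spark}\,\Gamma$, deduce that the restricted system is invertible, and conclude equality. The paper's version phrases the contradiction slightly differently (assuming two distinct solutions both with $\Vert\cdot\Vert_0 < \tfrac{1}{2}\,\text{spark}\,\Gamma$ rather than a competitor no sparser than $\vec{w}$), but the combinatorics and the invertibility step are identical; your explicit flag that the passage from ``$S$ linked into $Z$'' to ``$\Phi_S$ invertible'' relies on the structural-invertibility caveat is exactly the implicit assumption the paper also makes at that point.
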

This theorem provides a necessary condition for the localizability of a $k$-sparse error 
in a nonlinear dynamic system. For instance, if we expect an error or input to target a single
state node like in Fig.~\ref{fig:Fig1}(a), we have  $\Vert \vec{w}^* \Vert_0=1$  and we need 
$\text{spark}\,\Gamma \ge 3$ to pinpoint the exact position of the error in the network.
If an edge in the network is the error source, then two nodes are affected and $\Vert \vec{w}^* \Vert_0=2$. Such an error could be a misspecified reaction rate in a biochemical reaction or a cable break in an electrical network. To localize such an error we need $\text{spark}\,\Gamma \ge 5$. 

For smaller networks like the one in Fig.~\ref{fig:Fig1}(a), it is possible to 
exactly compute the spark (Definition~\ref{def:spark}) of a gammoid (Definition \ref{def:Gammoid}) using an combinatorial algorithm iterating over all possible input node sets. However, the computing time grows rapidly with the size of the network. Below we present bounds for the spark, which can efficiently be computed.

\subsection*{Convex Optimization for Sparse Input Reconstruction}
As in compressed sensing for matrices, finding the solution of ~\eqref{eq:DataProblem} with a minimum number of non-zero components $\Vert \vec{w} \Vert_0$ is an NP-hard combinatorial problem. Here, we formulate a convex optimal control problem as a relaxed version of this combinatorial problem. 
We define a Restricted-Isometry-Property (RIP) \cite{candes_decoding_2005} for the input-output operator 
$\Phi$ defined by~\eqref{eq:DynamicSystem} and provide conditions for the exact sparse recovery of errors in linear dynamic systems by solutions of the relaxed problem. As a first step it is necessary to introduce a 
suitable norm promoting the sparsity of the vector of input functions~$\vec{w}(t)$. 

Say, $\mathcal{L}$ is an input ground set of size $L$. The space of input functions 
\begin{equation}
	\mathcal{W} := \bigoplus_{i\in\mathcal{L}} \mathcal{W}_i
\end{equation}
is composed of all function spaces $\mathcal{W}_i$ corresponding to input component $w_i$. 
By definition of the input ground set, $w_i\equiv 0$ for $i\not\in \mathcal{L}$. Thus 
$\mathcal{W}_i=\{0\}$ for $i\not\in\mathcal{L}$.
Assume, that each function space $\mathcal{W}_i=L^p([0,T])$ is a Lebesque space equipped with the 
$p$-norm
\begin{equation}
	\Vert w_i \Vert_p = \left(\int_0^T |w_i(t)|^p dt\right)^{1/p} \, .
\end{equation}
We indicate the vector  
\begin{equation}
	\underline{\vec{w}} := \begin{pmatrix}
		\Vert w_1 \Vert_p \\ \vdots \\ \Vert w_L \Vert_p 
	\end{pmatrix} \, \in\mathbb{R}^L \label{eq:pnorm}
\end{equation}
collecting all the component wise function norms by an underline. 
Taking the $q$-norm in $\mathbb{R}^L$ 
\begin{equation}
	\Vert \underline{\vec{w}} \Vert_q = \left( 
		\underline{w}_1^q + \ldots + \underline{w}_L^q
	\right)^{1/q} 
\end{equation}
of $\underline{\vec{w}}$ yields the $p$-$q$-norm on $\mathcal{W}$ 
\begin{equation}
	\Vert \vec{w} \Vert_{q} : = \Vert \underline{\vec{w}} \Vert_q \, . \label{eq:qnorm}
\end{equation}
The parameter $p$ appears implicitly in the underline. Since our results are valid for all 
$p\in [1,\infty)$, we will suppress it from the notation.   

Similarly, for the $P$ outputs of the system, the output space
\begin{equation}
	\mathcal{Y} = \mathcal{Y}_1 \oplus \ldots \oplus \mathcal{Y}_P \, .
\end{equation}
can be equipped with a $p$-$q$-norm 
\begin{equation}
	\Vert \vec{y} \Vert_{q} : = \Vert \underline{\vec{y}} \Vert_q \, .
\end{equation}
An important subset of the input space $\mathcal{W}$ is the space $\Sigma_k$ of $k$-sparse inputs
	\begin{equation}
		\vec{w}\in \Sigma_k \Rightarrow \Vert \vec{w} \Vert_0 \leq k \, .
\end{equation}

	In analogy to a well known property \cite{candes_decoding_2005} from compressed sensing we define for our 
	dynamic problem:
	%
	% Definition
	%
	\begin{definition}\label{def:RIP}
		The \textbf{Restricted-Isometry-Property} (RIP) of order $2k$ is fulfilled, if there is 
			a constant $\delta_{2k}\in (0,1)$ such that for any two vector functions 
			$\vec{u},\vec{v}\in\Sigma_k$ 
			the inequalities
			\begin{equation}
				(1-\delta_{2k}) \Vert \underline{\vec{u}} - \underline{\vec{v}}\Vert_2^2
				\leq \Vert \underline{\Phi (\vec{u}) }   - \underline{\Phi (\Vec{v}) } \Vert^2_2
			\end{equation}
			and
			\begin{equation}
				 \Vert \underline{\Phi (\vec{u}) } + \underline{\Phi (\vec{v}) } \Vert^2_2
				\leq (1+\delta_{2k}) \Vert \underline{\vec{u}} + \underline{\vec{v}}\Vert_2^2
			\end{equation}		 
			hold.
	\end{definition}
	The reconstruction of sparse unknown inputs can be formulated as the optimization problem
	\begin{equation}
		\text{minimize } \Vert \vec{w} \Vert_0 \text{ subject to } \Vert \Phi(\vec{w})-
		\vec{y}^\text{data} \Vert_2 \leq \epsilon \label{eq:L0}
	\end{equation}
	where $\epsilon > 0$ incorporates uniform bounded measurement noise. A solution 
	$\hat{\vec{w}}$ of this problem will reproduce the data $\vec{y}^\text{data}$ according to 
	the dynamic equations~\eqref{eq:DynamicSystem} of the system with a minimal set of nonzero 
	components, i.e., with a minimal set $S$ of input nodes. As before, finding this minimal 
	input set is a NP-complete problem. Therefore, let us consider the relaxed problem
	\begin{equation}
		\text{minimize } \Vert \vec{w} \Vert_1 \text{ subject to } \Vert \Phi(\vec{w})-
		\vec{y}^\text{data} \Vert_2 \leq \epsilon \, . \label{eq:L1}
	\end{equation}
		The following result implies, that for a linear system of ODEs with $\vec{f}(\vec{x}) = A\vec{x}$ 
	and $\vec{c}(\vec{x}) = C \vec{x}$ with matrices $A \in \mathbb{R}^{N \times N}$ and 
	$C\in \mathbb{R}^{P \times N}$ in~\eqref{eq:DynamicSystem} the optimization problem~\eqref{eq:L1} 
	has a unique solution.
	%
	% Theorem
	%
	\begin{theorem} \label{theorem:convex}
		If $\Phi$ is linear, then \eqref{eq:L1} is a convex optimization problem.
	\end{theorem}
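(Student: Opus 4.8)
The plan is to verify the two defining properties of a convex optimization problem: that the objective functional is convex and that the feasible set is convex. First I would recall that a functional $h$ on a vector space is convex if $h(\lambda \vec{u} + (1-\lambda)\vec{v}) \le \lambda\, h(\vec{u}) + (1-\lambda)\, h(\vec{v})$ for all $\lambda\in[0,1]$, and that a set is convex if it contains the whole segment between any two of its points. The task then splits cleanly into controlling the objective $\Vert\vec{w}\Vert_1$ and the constraint set $\{\vec{w}:\Vert\Phi(\vec{w})-\vec{y}^\text{data}\Vert_2\le\epsilon\}$ separately.

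For the objective, I would observe that $\Vert\cdot\Vert_1$ is just the $p$-$1$-norm on $\mathcal{W}$ from \eqref{eq:qnorm}. Before invoking convexity I would briefly confirm that this composite object really is a norm: absolute homogeneity and positive definiteness are inherited directly from the inner $L^p$-norms, while the triangle inequality follows by applying the $L^p$ triangle inequality componentwise to obtain $\Vert w_i+v_i\Vert_p \le \Vert w_i\Vert_p + \Vert v_i\Vert_p$ and then combining monotonicity and subadditivity of the outer $\ell^1$-norm on nonnegative vectors. Once $\Vert\cdot\Vert_1$ is known to be a norm, its convexity is immediate, since $\Vert\lambda\vec{u}+(1-\lambda)\vec{v}\Vert_1 \le \lambda\Vert\vec{u}\Vert_1+(1-\lambda)\Vert\vec{v}\Vert_1$ is exactly the triangle inequality together with homogeneity.

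For the feasible set, the decisive step is to exploit the hypothesis that $\Phi$ is linear. Linearity makes the map $\vec{w}\mapsto\Phi(\vec{w})-\vec{y}^\text{data}$ affine, so the constraint functional $g(\vec{w}):=\Vert\Phi(\vec{w})-\vec{y}^\text{data}\Vert_2$ is the composition of the $p$-$2$-norm on $\mathcal{Y}$ (a norm by the same componentwise argument as above, now with $\ell^2$ in place of $\ell^1$) with an affine map, and such a composition is convex. Concretely, for $\vec{w}=\lambda\vec{w}^{(1)}+(1-\lambda)\vec{w}^{(2)}$ one has $\Phi(\vec{w})-\vec{y}^\text{data}=\lambda(\Phi(\vec{w}^{(1)})-\vec{y}^\text{data})+(1-\lambda)(\Phi(\vec{w}^{(2)})-\vec{y}^\text{data})$, so the triangle inequality yields $g(\vec{w})\le\lambda\, g(\vec{w}^{(1)})+(1-\lambda)\, g(\vec{w}^{(2)})\le\epsilon$ whenever both $\vec{w}^{(1)}$ and $\vec{w}^{(2)}$ are feasible. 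Hence $\{\vec{w}:g(\vec{w})\le\epsilon\}$ is a sublevel set of a convex functional and is therefore convex.

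Combining the two parts, \eqref{eq:L1} minimizes a convex functional over a convex set, so it is a convex optimization problem. I expect the only genuinely nonroutine point to be confirming that the mixed $p$-$q$-norms of \eqref{eq:qnorm} satisfy the triangle inequality in the infinite-dimensional function-space setting; once that fact is secured, the linearity of $\Phi$ does all the remaining work through the standard argument that a norm composed with an affine map is convex.
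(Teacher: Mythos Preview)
Your proposal is correct and follows essentially the same approach as the paper: verify that the feasible set $\{\vec{w}:\Vert\Phi(\vec{w})-\vec{y}^\text{data}\Vert_2\le\epsilon\}$ is convex via linearity of $\Phi$ and the triangle inequality, and that the objective $\Vert\cdot\Vert_1$ is convex because it is a genuine norm on $\mathcal{W}$. The paper handles the ``nonroutine'' point you flag by establishing separately (Proposition~\ref{prop:properNorm}) that the mixed $p$-$q$-norm is a proper norm, then simply citing the triangle inequality in the proof of Theorem~\ref{theorem:convex}; your inline sketch of that fact is equivalent.
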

	For a given input vector $\vec{w}\in\mathcal{W}$ we define the best $k$-sparse approximation 
	in $q$-norm as \cite{foucart_mathematical_2013}
	\begin{equation}
		\sigma_k(\vec{w})_q := \min_{\vec{u}\in\Sigma_k} \Vert \vec{w} - \vec{u} \Vert_q 
	\end{equation}
	i.e. we search for the function $\vec{u}$ that has minimal distance to the desired function 
	$\vec{w}$ under the condition that $\vec{u}$ has at most $k$ non-vanishing components. 
	If $\vec{w}$ is $k$-sparse itself, then we can choose 
	$\vec{u}=\vec{w}$ and thus the distance between the approximation and the desired function 
	vanishes, $\sigma_k(\vec{w})_q=0$.
	%
	% Theorem
	%
	\begin{theorem} \label{theorem:RIP}
			Assume $\Phi$ is linear and the RIP of order $2k$ holds.
			Let $\vec{w}^*$ be the solution of \eqref{eq:L0}. The
			optimal solution $\hat{\vec{w}}$ of \eqref{eq:L1} obeys
			\begin{equation}
				\Vert \hat{\vec{w}}- \vec{w}^* \Vert_2 \leq C_0 \frac{\sigma_k(\vec{w}^*)_1}{
				\sqrt{k}} + C_2 \epsilon 
			\end{equation}
			with non-negative constants $C_1$ and $C_2$~\footnote{Formulas for the constants 
			$C_1$ and $C_2$ can be found in the supplemental material.}.
	\end{theorem}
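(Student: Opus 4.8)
The plan is to adapt the classical RIP-based stability proof for $\ell_1$ minimization (as in \cite{candes_decoding_2005, foucart_mathematical_2013}) to the present operator setting, carrying out every step at the level of the \emph{underline} vectors $\underline{\vec{w}}\in\mathbb{R}^L$ rather than the function-valued inputs themselves. First I would set $\vec{h}:=\hat{\vec{w}}-\vec{w}^*$ and let $T_0$ be the index set of the $k$ largest entries of $\underline{\vec{w}^*}$; then I would order the remaining coordinates of $\underline{\vec{h}}$ by decreasing magnitude and partition them into consecutive blocks $T_1,T_2,\ldots$ of size $k$, writing $T_{01}:=T_0\cup T_1$. The goal is to bound $\Vert\underline{\vec{h}}\Vert_2$ by separately controlling the head $\Vert\underline{\vec{h}}_{T_{01}}\Vert_2$ and the tail $\sum_{j\ge 2}\Vert\underline{\vec{h}}_{T_j}\Vert_2$.

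Two ingredients feed this estimate. The tube (feasibility) bound: since both $\hat{\vec{w}}$ and $\vec{w}^*$ satisfy the constraint in \eqref{eq:L1}, the triangle inequality gives $\Vert\Phi(\hat{\vec{w}})-\Phi(\vec{w}^*)\Vert_2\le 2\epsilon$, and applying the componentwise reverse triangle inequality $\bigl|\Vert a\Vert_p-\Vert b\Vert_p\bigr|\le\Vert a-b\Vert_p$ I would pass from this to $\Vert\underline{\Phi(\hat{\vec{w}})}-\underline{\Phi(\vec{w}^*)}\Vert_2\le 2\epsilon$, which is the quantity that Definition~\ref{def:RIP} actually controls. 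The cone ($\ell_1$-optimality) bound: from $\Vert\hat{\vec{w}}\Vert_1\le\Vert\vec{w}^*\Vert_1$ and the decomposition of the $1$-norm over $T_0$ and $T_0^c$ I would derive $\Vert\underline{\vec{h}}_{T_0^c}\Vert_1\le\Vert\underline{\vec{h}}_{T_0}\Vert_1+2\sigma_k(\vec{w}^*)_1$, which in turn yields the tail estimate $\sum_{j\ge2}\Vert\underline{\vec{h}}_{T_j}\Vert_2\le k^{-1/2}\Vert\underline{\vec{h}}_{T_0^c}\Vert_1\le\Vert\underline{\vec{h}}_{T_{01}}\Vert_2+2k^{-1/2}\sigma_k(\vec{w}^*)_1$.

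The crucial algebraic step is a near-orthogonality estimate extracted from the two inequalities of Definition~\ref{def:RIP}. For $k$-sparse $\vec{u},\vec{v}$ supported on disjoint index sets one has $\underline{\vec{u}\pm\vec{v}}=\underline{\vec{u}}+\underline{\vec{v}}$ and $\langle\underline{\vec{u}},\underline{\vec{v}}\rangle=0$, so a polarization identity combined with the $(1\pm\delta_{2k})$ bounds produces $\langle\underline{\Phi(\vec{u})},\underline{\Phi(\vec{v})}\rangle\le\delta_{2k}\Vert\underline{\vec{u}}\Vert_2\Vert\underline{\vec{v}}\Vert_2$. Using this to control the cross terms $\langle\underline{\Phi(\vec{h}_{T_{01}})},\underline{\Phi(\vec{h}_{T_j})}\rangle$, together with the tube bound and the RIP lower bound on $\Vert\underline{\vec{h}}_{T_{01}}\Vert_2$, I would obtain an inequality of the form $\Vert\underline{\vec{h}}_{T_{01}}\Vert_2\le\alpha\,\epsilon+\beta\,\bigl(\Vert\underline{\vec{h}}_{T_{01}}\Vert_2+2k^{-1/2}\sigma_k(\vec{w}^*)_1\bigr)$ with $\beta=\sqrt{2}\,\delta_{2k}/(1-\delta_{2k})$. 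Provided $\delta_{2k}<\sqrt{2}-1$ so that $\beta<1$, solving for $\Vert\underline{\vec{h}}_{T_{01}}\Vert_2$ and adding back the tail yields the claimed bound $\Vert\hat{\vec{w}}-\vec{w}^*\Vert_2\le C_0\,\sigma_k(\vec{w}^*)_1/\sqrt{k}+C_2\,\epsilon$, with constants $C_0,C_2$ obtained explicitly in terms of $\delta_{2k}$ (the formulas referenced in the footnote).

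I expect the main obstacle to be exactly the nonlinearity of the underline map: unlike the scalar compressed-sensing setting, $\vec{w}\mapsto\underline{\vec{w}}$ is only sub- and super-additive componentwise, so $\underline{\vec{u}+\vec{v}}\ne\underline{\vec{u}}+\underline{\vec{v}}$ in general and the linearity of $\Phi$ alone does not transfer to the underline level. Every manipulation that is automatic in the finite-dimensional proof must therefore be re-justified through the triangle and reverse-triangle inequalities, and the estimates have to be organized so that the RIP is only ever invoked on pairs of disjoint-support blocks, where $\underline{\vec{u}\pm\vec{v}}=\underline{\vec{u}}+\underline{\vec{v}}$ makes the $\pm$ formulation of Definition~\ref{def:RIP} collapse to the clean polarization identity above. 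Verifying that this disjoint-support structure is genuinely available at each use of the RIP — and that no step secretly requires combining overlapping-support functions before taking the underline — is the part that needs the most care.
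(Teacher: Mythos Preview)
Your proposal is correct and follows essentially the same route as the paper's proof: the paper also sets $\vec{w}=\hat{\vec{w}}-\vec{w}^*$, chooses $\Lambda_0$ as the indices of the $k$ largest entries of $\underline{\vec{w}^*}$, partitions $\Lambda_0^c$ into blocks $\Lambda_1,\Lambda_2,\ldots$ ordered by $\underline{\vec{w}_{\Lambda_0^c}}$, proves the near-orthogonality lemma $\langle\underline{\Phi(\vec{u})},\underline{\Phi(\vec{v})}\rangle\le\delta_{2k}\Vert\vec{u}\Vert_2\Vert\vec{v}\Vert_2$ via polarization on disjoint-support pairs, derives the same cone constraint $\Vert\vec{w}_{\Lambda_0^c}\Vert_1\le\Vert\vec{w}_{\Lambda_0}\Vert_1+2\sigma_k(\vec{w}^*)_1$, and closes with the condition $\delta_{2k}<\sqrt{2}-1$ and explicit constants $C_0=2+4\alpha/(1-\alpha)$, $C_2=2\sqrt{1+\delta_{2k}}/\bigl((1-\alpha)(1-\delta_{2k})\bigr)$ where $\alpha=\sqrt{2}\,\delta_{2k}/(1-\delta_{2k})$. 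The only cosmetic difference is that the paper bounds the tube term through $\langle\underline{\Phi(\vec{w}_\Lambda)},\underline{\Phi(\vec{w})}\rangle\le\Vert\Phi(\vec{w}_\Lambda)\Vert_2\Vert\Phi(\vec{w})\Vert_2\le\sqrt{1+\delta_{2k}}\,\Vert\vec{w}_\Lambda\Vert_2\cdot 2\epsilon$ rather than via $\Vert\underline{\Phi(\hat{\vec{w}})}-\underline{\Phi(\vec{w}^*)}\Vert_2$, but this is the same estimate packaged slightly differently.
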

	Motivated by the latter theorem we define a cost functional
	\begin{equation}
		J[\vec{w}]:= \frac{1}{2} \Vert \Phi(\vec{w}) - \vec{y}^\text{data} \Vert_2^2 + 
		\beta \Vert \vec{w} \Vert_1
	\end{equation}
	with given data $\vec{y}^\text{data}$ and regularization constant $\beta$. 
	The solution of the optimization problem in Lagrangian form
	\begin{equation}
		\text{minimize } J[\vec{w}] \text{ subject to \eqref{eq:DynamicSystem}} \, ,
		\label{eq:optimization}
	\end{equation}
	provides an estimate for the input $\hat{\vec{w}}$, see Fig.~\ref{fig:Fig1} for an example.

\subsection*{Coherence of potential input nodes in linear systems}
	So far we have given theorems for the localizability and for the reconstruction of sparse 
	errors in terms of the spark and RIP. However, computing the spark or checking whether the 
	RIP condition holds are again problems whose computation time grows rapidly with 
	increasing systems size. Now, we present a coherence measure between a pair of state 
	nodes $i,j$ in linear systems indicating how difficult it is to decide whether a detected error
	is localized at $i$ or  at $j$. The coherence provides a lower bound for the spark and 
	can be approximated by an efficient shortest path algorithm. Computing the coherence for each 
	pair of state nodes in the network yields the coherence matrix, which 
	 can be used to isolate a subset of states where the root cause of the error must 
	be located. 
	
	If the system~\eqref{eq:DynamicSystem} is linear, i.e. $\vec{f}(\vec{x})=A\vec{x}$ and 
	$\vec{c}(\vec{x})=C\vec{x}$, we can use the Laplace-transform
	\begin{equation}
		T(s)\tilde{\vec{w}}(s) = \tilde{\vec{y}}(s),\qquad s\in\mathbb{C}
	\end{equation}	
	to represent the input-output map $\Phi_{\mathcal{L}}$ by the $L \times P$-transfer matrix $T(s)$.
	The tilde denotes Laplace-transform. Again, we assume that $w_i \equiv 0$ for all $i\ne \mathcal{L}$ and 	$\tilde{\vec{w}}(s)$ is the 
	vector of Laplace transforms of the components of $\vec{w}$ which are in the ground set $\mathcal{L}$ .   Recall that  $L \le N$ is the number of states in the ground set $\mathcal{L}$ and $P$ the number of 
	measured outputs. As before, $\mathcal{L}=\mathcal{N}$ is still a possible special case. 
	
	  We introduce the input gramian
	\begin{equation}\label{eq:defGramian}
		G(s) := T^*(s) T(s)
	\end{equation}
	where the asterisk denotes the hermitian conjugate. Note, that the input gramian is a $L\times L$ 
	matrix. Assume that we have chosen an arbitrary but fixed numbering of the states in the 
	 ground set, i.e. $\mathcal{L}=\{l_1,\ldots , l_L\}$ is ordered.
	%
	% Definition
	%
	\begin{definition} \label{def:coherence}
			Let $G$ be the input gramian of a linear dynamic system.
			We call 
			\begin{equation} \label{eq:ijcoherence}
				\mu_{ij}(s):=\frac{
				\vert G_{ij}(s) \vert }{\sqrt{G_{ii}(s)G_{jj}(s)}},\qquad s \in \mathbb{C}
			\end{equation}
			the \textbf{coherence function} of state nodes $l_i$ and $l_j$.
			We call 
			\begin{equation}
				\mu(s):= \max_{i\neq j} \mu_{ij}(s) \label{eq:mutual_coherence}
			\end{equation}
			the \textbf{mutual coherence} at $s\in \mathbb{C}$.
	\end{definition}
	Coherence measures to obtain lower bounds for the spark have been used for signal decomposition 
	\cite{donoho_uncertainty_1989} and compressed sensing for matrices \cite{donoho_optimally_2003}. 
	In the next theorem, we use the mutual coherence for linear dynamic systems in a similar way to provide bounds for the spark.
	%
	% Theorem
	%
	\begin{theorem}\label{theorem:sparkmu}
		Let $\Gamma=(\mathcal{L},g,Z)$ be a gammoid with mututal coherence $\mu(s)$ at some point 
		$s\in\mathbb{C}$. then
		\begin{equation}\label{eq:sparkmu}
			\text{spark}\, \Gamma \geq \frac{1}{\mu(s)} + 1 \quad \forall s \in \mathbb{C}.
		\end{equation}
	\end{theorem}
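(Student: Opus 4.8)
The plan is to unwind the definition of the spark (Definition~\ref{def:spark}) and reduce the claimed bound~\eqref{eq:sparkmu} to a statement about the columns of the transfer matrix $T(s)$. Fix an arbitrary $s\in\mathbb{C}$ and write $\mu=\mu(s)$. By Definition~\ref{def:spark} it suffices to show that every input set $S\subseteq\mathcal{L}$ with $\operatorname{card}S=k<1/\mu+1$ satisfies $\operatorname{null}S=0$, i.e.\ is independent in $\Gamma$. I would establish this in two stages: first, that the hypothesis $k<1/\mu+1$ forces the $k$ columns $\{T_{\cdot i}(s):i\in S\}$ of the transfer matrix to be linearly independent at the point $s$; second, that this pointwise linear independence implies that $S$ is linked into $Z$, hence independent in the gammoid.

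For the first stage I would use the classical Gram-matrix/Gershgorin argument from compressed sensing. Writing $G(s)=T^*(s)T(s)$ as in~\eqref{eq:defGramian}, its entries are the inner products $G_{ij}(s)=\langle T_{\cdot i}(s),T_{\cdot j}(s)\rangle$, so the normalized columns have Gram matrix $H$ with $H_{ij}=G_{ij}(s)/\sqrt{G_{ii}(s)G_{jj}(s)}$. By~\eqref{eq:ijcoherence}--\eqref{eq:mutual_coherence} we have $H_{ii}=1$ and $|H_{ij}|=\mu_{ij}(s)\le\mu$ for $i\neq j$. The hypothesis $k<1/\mu+1$ is equivalent to $(k-1)\mu<1$, so each Gershgorin disc of $H$ is centred at $1$ with radius $\sum_{j\neq i}|H_{ij}|\le(k-1)\mu<1$; hence every eigenvalue of $H$ is strictly positive, $H$ is nonsingular, and a Gram matrix is nonsingular exactly when its underlying vectors are linearly independent. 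This yields linear independence of $\{T_{\cdot i}(s):i\in S\}$. This step implicitly assumes $G_{ii}(s)>0$ for all $i$, which is precisely the condition that $\mu_{ij}(s)$ in~\eqref{eq:ijcoherence} is well defined; nodes with identically vanishing transfer are trivially non-localizable and may be excluded.

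For the second stage I would pass from pointwise linear independence to the combinatorial linking condition of Definition~\ref{def:Gammoid}. Linear independence of the $k$ columns at $s$ means $\operatorname{rank}T_S(s)=k$, so the generic rank of the matrix-valued function $T_S(\cdot)$ (its maximal rank over $\mathbb{C}$) equals $k=\operatorname{card}S$. By the structural correspondence underlying Theorem~\ref{theorem:structuralinvertibility}~\cite{wey_rank_1998}, the generic rank of the input-to-output transfer submatrix associated with $S$ equals the maximum number of pairwise node-disjoint paths from $S$ into $Z$. A full generic rank $k$ therefore produces a family of $k$ node-disjoint paths from the $k$ nodes of $S$ to $Z$, i.e.\ $S$ is linked into $Z$ and thus independent in $\Gamma$; equivalently $\operatorname{rank}S=\operatorname{card}S$ and $\operatorname{null}S=0$ by Definition~\ref{def:rank}. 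Since $S$ was an arbitrary input set with $\operatorname{card}S<1/\mu(s)+1$, Definition~\ref{def:spark} gives $\operatorname{spark}\Gamma\ge 1/\mu(s)+1$, and as $s$ was arbitrary the bound~\eqref{eq:sparkmu} holds for all $s\in\mathbb{C}$.

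The routine part is the Gershgorin estimate; the delicate part is the second stage, namely the bridge between the analytic object (the pointwise rank of $T(s)$) and the purely graph-theoretic notion of independence (linking). The care needed here is to argue that linear independence at a single point $s$ already certifies the full generic rank, and then to invoke the node-disjoint-path characterization of that generic rank in the correct direction (full rank $\Rightarrow$ linked), which is the converse flavour of Theorem~\ref{theorem:structuralinvertibility} and must be justified through the structural rank identity rather than through the theorem exactly as stated.
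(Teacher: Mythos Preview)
Your proposal is correct and follows essentially the same route as the paper: a Gershgorin/diagonal-dominance argument on the (normalized) input Gramian $G(s)=T^*(s)T(s)$ to force the $k$ columns of $T(s)$ to be linearly independent whenever $k<1/\mu(s)+1$, followed by the passage from pointwise rank to generic rank to gammoid independence via the structural rank identity behind Theorem~\ref{theorem:structuralinvertibility}. Your explicit normalization $H_{ij}=G_{ij}/\sqrt{G_{ii}G_{jj}}$ makes the Gershgorin step cleaner than the paper's unnormalized version, and you correctly identify that the delicate step is invoking the converse direction (full generic rank $\Rightarrow$ linked), which the paper also relies on but treats more implicitly.
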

	Since \eqref{eq:sparkmu} is valid
	for all values of $s\in \mathbb{C}$, it is tempting to compute $\inf_{s \in \mathbb{C}} \mu(s)$
	to tighten the bound as much as possible. Please note, however, that $\mu(s)$ is not a holomorphic 
	function and thus the usual trick of using a contour in the complex plane and applying the 
	maximum/minimum modulus principle can not be applied~(see e.g.~\cite{sontag_mathematical_1998}). Instead, we will introduce the 
	shortest path coherence, which can efficiently be computed and which can be used in Theorem~\ref{theorem:sparkmu} to 
	obtain lower bounds for the spark. 

\subsection*{Shortest Path Coherence}
	There is a one-to-one correspondence between linear dynamic systems and weighted\footnote{Weights are understood as real 
	constant numbers.} gammoids. 
	The weight of the edge $j\to i$ is defined 
	by the Jacobian matrix
	\begin{equation}
		F(j \to i) := \frac{ \partial f_i (\vec{x}) }{\partial x_j } \, 
	\end{equation}
	and is constant for a linear system. We extend this definition to sets of paths in the following way: Denote by $\pi = (i_0 \to i_1 \to \ldots \to i_{\ell})$ a directed path in the influence graph $g$. 
	The length of $\pi$ is 
	$\text{len}(\pi)=\ell$ and the weight of $\pi$ is given by the product of all edge weights along that 
	path: 
	\begin{equation}
		F(\pi) = \prod_{k=1}^{\ell} F(i_{k-1} \to i_k) \, .
	\end{equation}
	Let $\Pi = \{\pi_1 , \ldots , \pi_{M} \}$ be a set of paths. The weight of $\Pi$ is given by the sum 
	of all individual path weights:
	\begin{equation}
		F(\Pi) = \sum_{k=1}^M F(\pi_k) \, .
	\end{equation}	

    The input gramian $G(s)$ \eqref{eq:defGramian} is the composition of the transfer function $T$ and its
     hermitian  conjugate $T^*$. The transfer function $T$ can be interpreted as a gammoid $\Gamma=(\mathcal{L},g,Z)$, where 
	the input nodes from $\mathcal{L}$ correspond to the columns of $T$ and the output nodes from $Z$ 
	correspond to the rows of $T$.

	 There is also a \emph{transposed gammoid} \footnote{The transposed gammoid should not be confused with the notion of a dual gammoid in matroid 
	theory \cite{whitney_abstract_1935}.}.
	 \begin{equation}
		\Gamma'=(Z',g',\mathcal{L}') \, .
	\end{equation}	
	 corresponding to the hermitian conjugate  $T^*$, see Fig.~\ref{fig:Fig2}.	Here, the 
	 \emph{transposed graph} $g'$ is obtained by flipping the edges of the original graph $g$. 
	 The input ground set $Z'$ of the transposed
	 gammoid $\Gamma'$ corresponds to the output set $Z$ of $\Gamma$. Similarly, the output set 
	 $\mathcal{L}'$ of $\Gamma'$ is given by the input ground set $\mathcal{L}$ of $\Gamma$.

	As we have gammoid representations $\Gamma$	and $\Gamma'$ for $T$ and $T^*$, also the gramian has such a gammoid
	representation which we denote as $(\Gamma\circ \Gamma')$.
    To obtain $(\Gamma \circ \Gamma')$ we identify
	the outputs $Z$ of $\Gamma$ with the inputs $Z'$ of $\Gamma'$, see Fig.~\ref{fig:Fig2}(c). 
	\begin{figure}
		\centering
		\includegraphics[width=\columnwidth]{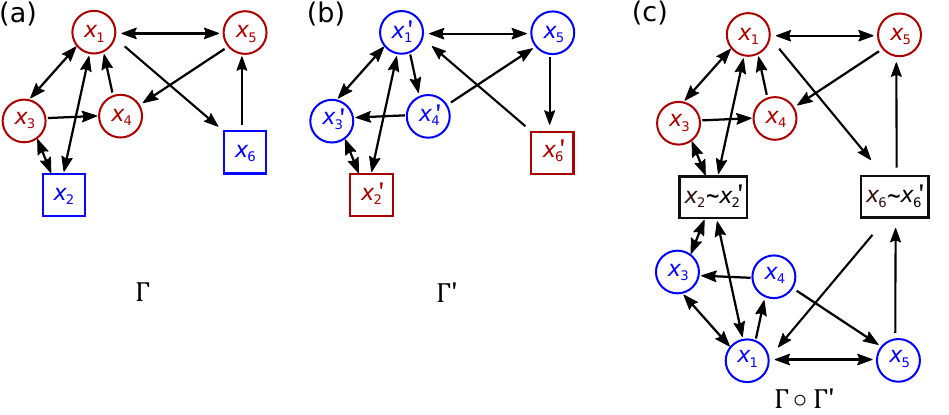}
		\caption{Gammoid representation of the input gramian. 
		(a) An exemplary gammoid $\Gamma$. The nodes in red represent the input ground set $\mathcal{L}$ and 
		the nodes in blue (squares) the output set $Z$.
		(b) The transposed gammoid $\Gamma'$. Compared to (a), the arrows are flipped. The red nodes (squares) 
		represent the input ground set $Z'$ and the nodes in blue the output set $\mathcal{L}'$.
		(c) The combined gammoid $(\Gamma \circ \Gamma')$. The outputs $Z$ of $\Gamma$ are identified with the 
		inputs $Z'$ of $\Gamma'$. Again, red nodes represent the inputs $\mathcal{L}$ and the blue nodes represent 
		the outputs $\mathcal{L}'$ of the gammoid $(\Gamma \circ \Gamma')$.
		}
		\label{fig:Fig2}
	\end{figure}

	%
	% Definition
	%
	\begin{definition}	 \label{def:shortestpathcoherence}
		Let $\Gamma$ be a weighted gammoid with ground set $\mathcal{L}=\{l_1,\ldots,l_L\}$. For two nodes
		$l_i,l_j\in\mathcal{L}$ let 
		$\psi_{ij}$ denote the shortest path from $l_i$ to $l_j'$ in $(\Gamma \circ \Gamma')$. We call
		\begin{equation}
			\mu_{ij}^\text{short} := \frac{\vert F(\psi_{ij})\vert}{\sqrt{F(\psi_{ii})F(\psi_{jj})}} 
		\end{equation}
		the shortest path coherence between $l_i$ and $l_j$.
	\end{definition}
	%
	% Theorem
	%
	\begin{theorem} \label{theorem:shortestpathcoherence}
		We find that
		\begin{equation}
			\mu_{ij}^\text{short}  \geq
			\lim_{|s| \to \infty}\frac{\vert G_{ij}(s)\vert}{\sqrt{G_{ii}(s)G_ {jj}(s)}} \, .
		\end{equation}
	\end{theorem}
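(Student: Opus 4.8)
The plan is to reduce the high-frequency behaviour of the input gramian to a weighted walk-counting problem in the influence graph, and then to match the leading asymptotic orders of $G_{ij}$, $G_{ii}$ and $G_{jj}$ against the lengths of shortest paths in the combined gammoid $(\Gamma\circ\Gamma')$. First I would write the transfer matrix of the linear system as $T(s)=C(sI-A)^{-1}B$, with $B$ and $C$ the selection matrices for the ground set $\mathcal{L}$ and the output set $Z$, and expand the resolvent as the Neumann series $(sI-A)^{-1}=\sum_{k\ge 0}A^{k}s^{-(k+1)}$. Since the edge weight satisfies $F(q\to p)=A_{pq}$, the entry $(A^{k})_{z,l_i}$ is exactly the sum of the weights $F(w)$ over all walks $w\colon l_i\to z$ of length $k$. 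Hence each transfer entry is a weighted walk-generating function $T_{zi}(s)=\sum_{w\colon l_i\to z}F(w)\,s^{-(\operatorname{len}(w)+1)}$.

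Second, I would extract the leading term as $|s|\to\infty$: the slowest-decaying contribution to $T_{zi}(s)$ comes from the shortest walks $l_i\to z$, so $T_{zi}(s)\sim W_{l_i\to z}\,s^{-(\operatorname{dist}(l_i,z)+1)}$, where $W_{l_i\to z}$ collects $F(w)$ over those shortest walks. Substituting into $G_{ij}(s)=\sum_{z\in Z}\overline{T_{zi}(s)}\,T_{zj}(s)$, the dominant power of $1/s$ is governed by the outputs $z$ minimising $\operatorname{dist}(l_i,z)+\operatorname{dist}(l_j,z)$. By construction this minimal combined length is precisely the length of the shortest path $\psi_{ij}$ from $l_i$ to $l_j'$ in $(\Gamma\circ\Gamma')$, since a path there concatenates a shortest walk $l_i\to z$ with a reversed shortest walk $l_j\to z$; the corresponding leading coefficient is a sum of products $W_{l_i\to z}\,W_{l_j\to z}$, i.e.\ an aggregate of weights of shortest combined paths. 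The diagonal terms behave the same way with $\psi_{ii}$ and $\psi_{jj}$, whose minimal lengths are twice the distances from $l_i$ and from $l_j$ to the nearest output.

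Third, I would form the ratio and split into two cases. If no single output is simultaneously nearest to both $l_i$ and $l_j$, the numerator decays strictly faster than $\sqrt{G_{ii}G_{jj}}$, so the limit is $0$ and the bound holds trivially. Otherwise the asymptotic orders match and the limit equals $|N_{ij}|/\sqrt{N_{ii}N_{jj}}$ for the leading coefficients $N_{\bullet}$; here the Cauchy--Schwarz inequality applied to the output-indexed vectors of shortest-walk weights bounds the cross coefficient by the geometric mean of the diagonal ones, and comparing these aggregated coefficients with the single-path weights $F(\psi_{ij})$, $F(\psi_{ii})$, $F(\psi_{jj})$ is what produces $\mu_{ij}^{\text{short}}$ as the claimed upper bound.

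I expect the main obstacle to be the degenerate geometry. Several outputs may lie at the same minimal distance, and several shortest walks of equal length may connect a fixed pair of nodes; then the leading coefficient of $G_{ij}$ is a genuine sum that can suffer sign cancellations, whereas the diagonal coefficients aggregate squared weights with no cancellation, so the single representative weight in $\mu_{ij}^{\text{short}}$ need not reproduce the aggregate. The delicate point is therefore to show that replacing the summed coefficients by a single shortest-path weight can only increase the ratio---or, under a genericity assumption making shortest paths unique, reproduces it exactly. This mismatch between a single path and a sum of equal-length paths is precisely the origin of the inequality rather than equality in the statement, and it is the step I would spend the most effort making rigorous.
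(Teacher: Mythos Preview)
Your approach is essentially the paper's: Neumann expansion of the resolvent, interpretation of $(A^k)_{ji}$ as the weight of all length-$k$ walks, identification of the leading asymptotics of $G_{ij}(s)$ with the shortest paths in $(\Gamma\circ\Gamma')$, and the dichotomy based on whether $\tfrac{1}{2}(\operatorname{len}\psi_{ii}+\operatorname{len}\psi_{jj})$ equals or is strictly less than $\operatorname{len}\psi_{ij}$. The paper isolates this last comparison as a separate lemma (a triangle-type inequality for shortest paths through the output layer), which is exactly your observation that a common nearest output is needed for the orders to match; when the inequality is strict the limit is zero, otherwise it equals $\mu_{ij}^{\text{short}}$.

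The one place you diverge is the ``delicate point'' at the end. The paper does \emph{not} prove an inequality between aggregated coefficients and single-path weights, and no Cauchy--Schwarz step appears. Instead it adopts the convention (stated explicitly in the proposition) that $\psi_{ij}$ may denote the \emph{set} of all shortest paths, so that $F(\psi_{ij})$ is already the sum $\sum F(\pi)$ over that set. With this reading the leading coefficient of $G_{ij}$ is literally $F(\psi_{ij})$, and likewise for the diagonals; the ratio is then an equality in the matched-length case, not an inequality requiring further work. Your worry about cancellations versus squared sums is therefore absorbed into the definition rather than proved away. If you keep the single-path reading of $\psi_{ij}$, the statement as written is not obviously true without a genericity assumption, so the set interpretation is the intended one.
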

	We see that
	\begin{equation}
		\inf_{s\in\mathbb{C}} \max_{i\neq j} \mu_{ij}(s) \leq \max_{i \neq j} \mu_{ij}^\text{short}
	\end{equation}		
	and therefore the shortest path mutual coherence can also be used in theorem 
	\ref{theorem:sparkmu} to get a (more pessimistic) bound for the spark.
	The advantage of the shortest shortest path mutual coherence is that it can 
	readily be computed even for large ($N>100$) networks. 
\begin{figure}%[tbhp]
		\centering
		\includegraphics[width=\textwidth]{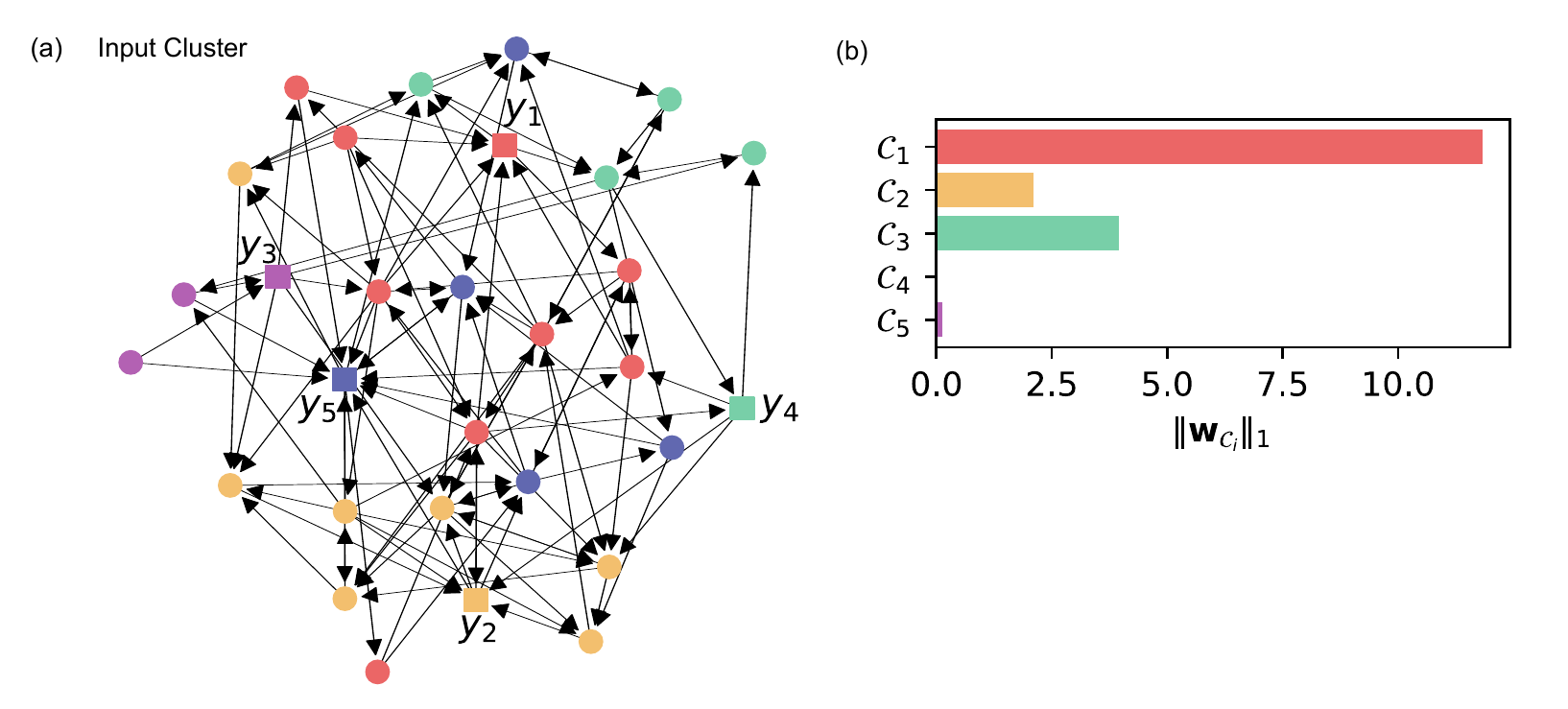}
		\caption{Restricting the error location to a subset of state nodes. 
			(a) The graph of a linear dynamic system with $30$ nodes and $5$ 
			output nodes $y_1,\ldots,y_5$. These outputs are not sufficient to 
			pinpoint the state $x_6$ as the single root cause of the unknown input 
			$\vec{w}^* (t)=(0,\ldots, 0, w_6^*(t),0,\ldots,0)^T$. 
			The 5 different colours of the state nodes indicate their membership in one of the 
			different input clusters $\mathcal{C}_1,\ldots, \mathcal{C}_5$. State nodes 
			within the same input cluster can not be distinguished from each other as 
			possible error sources. 
			(b) The sum over signal norms of each state in an input cluster (see \eqref{eq:cluster_input}) 
			can be 	used as an indicator that at least one node in this particular cluster 
			is targeted by an error.  The 
			barplot indicates that $\mathcal{C}_1$ is the cluster most likely to be targeted 
			by a 1-sparse error. }
		\label{fig:Fig3}
	\end{figure}	
\section*{Examples and Node Clustering}
In this section we illustrate by example, how our theoretical results from the previous section 
can be used to localize and reconstruct unknown inputs. These inputs can be genuine
inputs from the environment or model errors or faults in a dynamic system
\cite{engelhardt_bayesian_2017, kahl_structural_2019}.

\subsection*{Example 1: Reconstruction of the root cause of a sparse error}
We are now coming back to the scenario in Fig.\ref{fig:Fig1}. Assume, we  have detected some unexpected output behaviour in a given dynamic system. Now, we 
want to reconstruct the root cause for the detected error. 
We simulated this scenario for a linear system with $N=30$ state nodes $\mathcal{N}=\{1,\ldots,30\}$
and randomly sampled the interaction graph $g$, see again~Fig.\ref{fig:Fig1}(a). 
The outputs are given as time course measurements $y_1^{data}(t),\ldots,y_{10}^{data}(t)$ of $P=10$ randomly
selected sensor nodes $Z$, see~Fig.\ref{fig:Fig1}(b). 
In our simulation, we have added the unknown input $\vec{w}^*(t)$ 
with the only nonzero component $w^*_6(t)$ (Fig.\ref{fig:Fig1}(c)). However, we assume that 
we have no information about the localization of this unknown input. 
Thus, the ground set is $\mathcal{L}=\mathcal{N}$. 

For a network of this size, it is still possible to 
exactly compute the spark (Definition~\ref{def:spark}) of the gammoid $(\mathcal{L},g,Z)$ (Definition \ref{def:Gammoid}). This straightforward algorithm iterates 
over two different loops: In the inner loop we iterate over all possible input sets $S$ of size $r$  and check, whether $S$ is linked in $g$
into $Z$ (see Theorem~\ref{theorem:structuralinvertibility}). In the outer loop we repeat this for all possible $r=1,2,\ldots,N$. The algorithm terminates, 
if we find an input set which is not linked into $Z$. If $r$ is largest subset size for which all $S$ are linked in $g$ into $Z$, the spark is given by $r+1$.

For the network in ~Fig.\ref{fig:Fig1}(a) we find that $\text{spark}\, \Gamma=3$. From \eqref{eq:spark_localizability} we conclude, that 
an unknown input targeting a single node in the network can uniquely be localized. Thus, under the assumption that the 
output residual was caused by an error targeting a single state node, we can uniquely reconstruct this input from 
the output. The reconstruction is obtained as the  solution of the regularized optimization 
 problem in \eqref{eq:optimization},see~Fig.\ref{fig:Fig1}(c). For the fit we allowed each node $x_i$ to receive an input $\hat{w}_i$. We used a regularization constant of $\beta = 0.01$ in ~\eqref{eq:optimization}).
 
Please note, that a necessary condition for the reconstruction to work is an assumption about 
the sparsity of the unknown input. If we would assume that more than one state node is targeted by an error, we would need
a larger spark to exactly localize and reconstruct the error. This would either require a smaller ground set 
$\mathcal{L}$ or a different set of sensor nodes $Z$, or both. 

\subsection*{Example 2: Restricting the error location to a subset of state nodes} 
In Fig.~\ref{fig:Fig3} we have plotted the graph corresponding to the same system as in Fig.~\ref{fig:Fig1}, but now with
a different set of only 5 output nodes $Z$. As before, we performed a twin experiment by adding an
error signal $w^*_6(t)$ to the state node $i=6$ only. However, this smaller sensor node set $Z$ is not sufficient to localize even a single error at an unknown position. The corresponding gammoid $\Gamma=(\mathcal{L}, g, Z)$
with ground set $\mathcal{L}=\mathcal{N}$ has $\text{spark}\, \Gamma = 2$ and therefore the condition  \eqref{eq:spark_localizability} is not fulfilled. Solving the optimization problem~\eqref{eq:L1} with $\mathcal{L}$ as a ground set is also not guaranteed to reconstruct the localization of the unknown input, since the RIP condition (Theorem~\ref{theorem:RIP}) cannot practically be 
tested. 

Nevertheless, we can still restrict the position 
of the error using a node clustering strategy: We use the shortest path coherence matrix
$(\mu^\text{short}_{ij})$ as a similarity index between each pair $(i,j)\in\mathcal{N}\times \mathcal{N}$ 
of state nodes and employ a standard hierarchical clustering algorithm to group the state nodes 
in $\mathcal{L}$ into disjoint input clusters $\mathcal{C}_1,\ldots, \mathcal{C}_5$ such that $\mathcal{L}= \mathcal{C}_1 \dot{\cup} \mathcal{C}_2 \dot{\cup} \ldots \dot{\cup} \mathcal{C}_5 $. The nodes belonging to one and the same cluster 
have such a high coherence that it is not possible to determine which of them is the 
root course of the error. But, as can be seen in Fig.~\ref{fig:Fig2}(b), we can rank the input clusters 
to at least isolate the cluster containing the node targeted by the error. To this end, we 
solve the optimal control problem \eqref{eq:optimization} and sum the 
estimated hidden inputs within each cluster
\begin{equation}
\Vert \vec{w}_{\mathcal{C}_k} \Vert_1= \sum_{i \in \mathcal{C}_k}\Vert w_i \Vert_p.\label{eq:cluster_input}
\end{equation}
The strongest total input signal is observed for cluster $\mathcal{C}_1$. If we assume that the root cause 
of the observed error is localized at a single state node (sparsity $k=1$), we 
select this cluster $\mathcal{C}_1$. This corresponds to the ground truth in this twin experiment.
However, if we suspect more than one node to be targeted by errors (say $k \ge 2$) but the strongest cluster contains less than $k$ nodes, we have to consider more clusters in the order 
of their ranking until $k$ nodes are covered.

\subsection*{Example 3: Sensor node selection and a strategy for constricting the error location} 

\begin{figure*}%[tbhp]
		\centering
		\includegraphics[width=\textwidth]{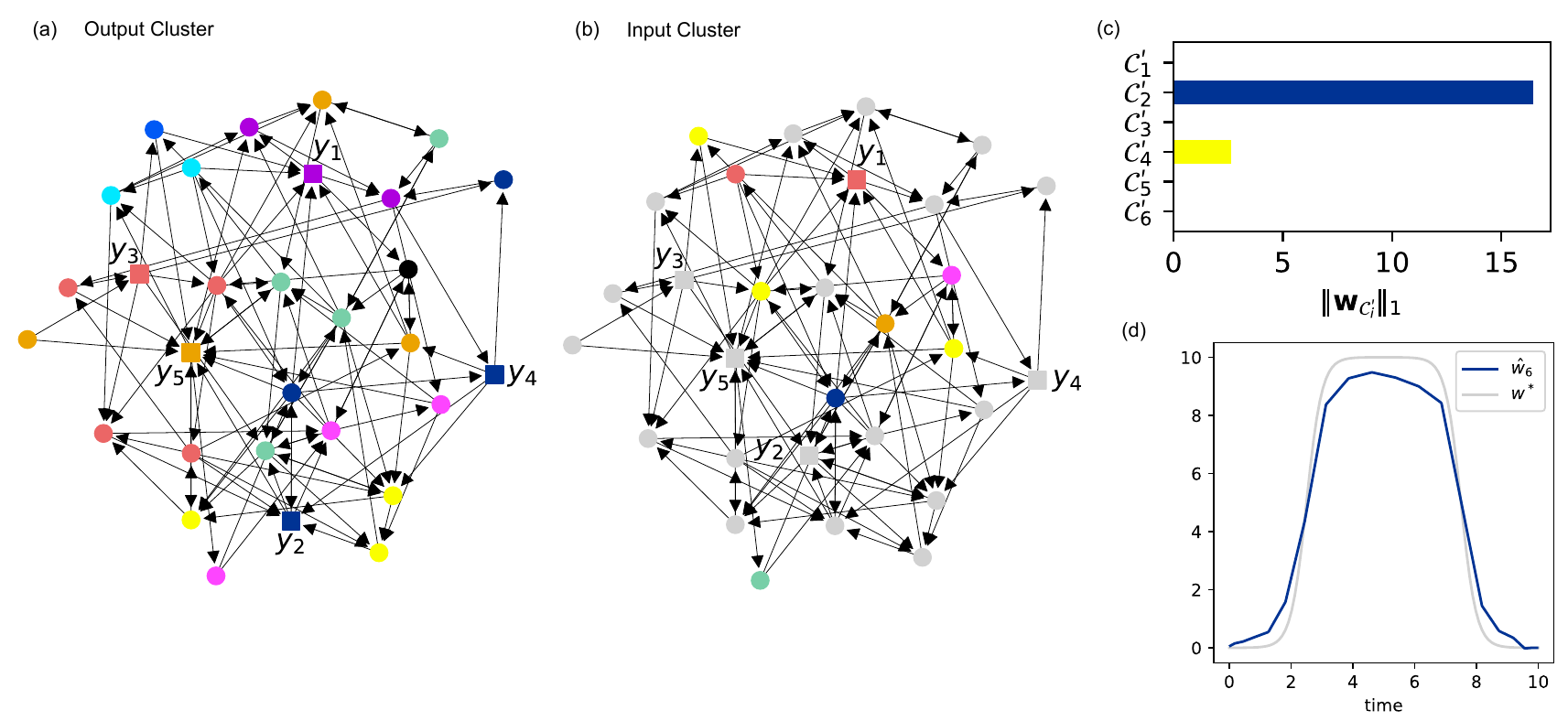}
		\caption{Narrowing down the location of the error using informative sensor nodes.  
			(a) The system, the unknown inputs and the output node set are the same as 
			in Fig.~\ref{fig:Fig3}. But, the colours in this graph indicate now clusters of 
			coherent {\em outputs}. Measuring more than one state node within the same output 
			cluster does not provide additional information about the location of the error as 
			compared to a single measurement in each cluster. 
			For example, output sensors $y_2$ and $y_4$ are in the same output cluster 
			and thus provide only redundant information. Therefore, we move the 
			sensor $y_2$ to an output cluster which is not yet covered by a sensor. 
			(b) {\em Input} 
			clustering after moving $y_2$. 
			The state nodes within each of the 6 input clusters can still not be distinguished as 
			the cause of the error. (c) Cluster $\mathcal{C}_2'$ in (b) (a subcluster of the 
			original $\mathcal{C}_1$ in Fig.~\ref{fig:Fig3})  has the largest total signal norm 
			(\eqref{eq:cluster_input}). In this example, $\mathcal{C}_2'$ consists only of the 
			single state node with $i=6$.
			(d) The reconstructed output $\hat{w}_6(t)$ is a good estimate of the true error 
			signal. 
  			}
		\label{fig:Fig4}
	\end{figure*}	

The clustering of the input ground set described in the previous Example~2 is always dependent on the given output set $Z$. 
This implies that the the size and location of the input clusters in the last example depends on the output set $Z$.
To further narrow down the position of the single unknown input in Fig.~\ref{fig:Fig2},
additional or more carefully chosen measurements are necessary. A set $Z$ of sensor nodes is most effective, 
if the sensors provide output signals with distinct information. We will describe now by example, how the 
degree of coherence between the output measurements can be quantified. This will provide a strategy for selecting incoherent, i.e. 
non-redundant, sensor nodes. 

In the same way we use $(\Gamma \circ \Gamma')$ to compute a measure for the indistinguishability of 
input nodes, we can compute the shortest path coherence matrix of $(\Gamma' \circ \Gamma)$ to quantify the 
indistinguishability of output nodes. This output coherence measure depends on the chosen input set. 
Since we have already identified $\mathcal{C}_1$ as the target cluster for the error (see Fig.~\ref{fig:Fig3}), we now 
restrict the input set to $\mathcal{C}_1$ and compute the output coherence with respect to this 
restricted ground set~$\mathcal{C}_1$. Fig.~\ref{fig:Fig4}(a) shows the resulting clusters of 
indistinguishable output nodes.  Sensor nodes in the same  output cluster provide similar, 
and thus redundant information. Hence, it makes sense to place the sensors in distinct output clusters. 

We can see in Fig.~\ref{fig:Fig4}(a) that the sensors $y_2$ and $y_4$ lie in the same output cluster. To 
collect complementary information about the location of the unknown input, we replace one of these
two sensors, say $y_2$, by a sensor in a different cluster. In Fig.~\ref{fig:Fig4}(b), the 
output $y_2$ was moved to a different position and thus covers a different output cluster.
We can now repeat the input clustering from Example 2 for the gammoid with $\mathcal{C}_1$ as input ground set 
and our improved output set. The resulting input clustering can be seen in Fig.~\ref{fig:Fig4}(b). 
The grey nodes represent the state variables which are not in the input ground set and therefore not considered in the clustering. 
With~$\mathcal{C}_1$ as input ground set we can again solve the optimization problem~\eqref{eq:optimization} 
and add the input norms (see \eqref{eq:cluster_input}) in each of the new subclusters of $\mathcal{C}_1$. 
From Fig.~\ref{fig:Fig4}(c) we identify the new cluster $\mathcal{C}_2'$ (a subcluster of the original $\mathcal{C}_1$) 
as the target cluster for unknown inputs. In this example, $\mathcal{C}_2'$ consists only of the single 
state node with $i=6$. Indeed, this is correct input node in our twin experiment.  Solving the 
optimization problem one last time with only the single node target cluster $\mathcal{C}'_2=\{6\}$ as ground set 
provides  again an accurate reconstruction $\hat{w}_6(t)$ of the error signal, see Fig.~\ref{fig:Fig4}(d). 

This example illustrates, how the input ground set can be more and more 
reduced by iteratively repeating the strategy consisting of input node clustering, target cluster selection 
using convex optimization, output clustering and non-redundant sensor node placement. 

Please note, that this strategy depends on the assumption of sparsity. Here, we assumed
a single input, i.e. a sparsity of $k=1$. However, it is straightforward 
to generalize to $k \ge 2$ sparse errors, as long as these potential target clusters 
cover at least $k$ potential input nodes. Also note, that in the case of real
data with stochastic input noise, there might be several clusters with almost 
the same total input signal. Then, it might be reasonable to also consider 
clusters with approximately the same total input. Analysing the effect of stochastic measurement
noise is left as a question for further research.

\section*{Discussion}
Finding the root cause of errors or faults is important in many contexts. We have presented 
a mathematical theory for the localization of sparse errors, which overcomes the need to 
a priori assume certain types of errors. This restriction is replaced by the sparsity assumption, which 
is plausible in many real world settings, where the failure of a small number of components is 
observed from the sensors, but the localization of the fault is unknown. Similarly, for the problem 
of modelling dynamic systems, it is important to know where the model is wrong and which states in the 
model need a modified description. This includes also open systems, which are influenced by unknown 
inputs from their environment. We have used the gammoid concept to define the notion of independence 
for inputs to dynamic systems. This allowed us to generalize concepts from sparse sensing to 
localize and infer such sparse unknown inputs.

Theorem~\ref{theorem:spark2} is general and applies to nonlinear systems. The other results are only proved for linear systems. An important open research question is to check, whether 
the optimization problem  in~\eqref{eq:L1} is also suitable for nonlinear systems. 
In addition, the RIP-condition in Definition~\ref{def:RIP} is already hard to test for linear systems, a situation we already know from classical compressed sensing for matrices~\cite{yonina_c_eldar_compressed_nodate}. However, there is an additional complication in the problem of estimating 
the inverse of the input-output map $\Phi$ corresponding to the dynamic  system \eqref{eq:DynamicSystem} : The 
map $\Phi$ is compact and maps from an infinite dimensional input space to the infinite dimensional output space. Inverse systems theory \cite{nakamura_potthast_inverse_2015} tells us, that the inversion of such operators is discontinous. Thus, more research on the numerics of this $L_1$ regularized optimal 
control problem is needed \cite{vossen_onl1-minimization_2006}. In addition, stochastic dynamic
systems with unknown inputs will provide an interesting direction for further research.

Our results are complementary to recent work on Data-Driven Dynamic Systems, where the the goal is to discover the dynamics solely from measurement data \cite{brunton_discovering_2016, yair_reconstruction_2017, pathak_model-free_2018, champion_data-driven_2019}. For data sets of limited size, these purely data driven methods might profit from the prior knowledge encoded by an possibly imperfect but informative model. Our work provides a straightforward approach to combine models and data driven methods. For a given model, the estimated error signals can be analysed with a data driven method to discover their inherent dynamics. We believe that the
combination of data driven systems with the prior information from interpretable mechanistic models will provide major advances in our understanding of dynamic networks.

\section*{Acknoledgements}
	The authors would like to thank Philipp Wendlandt and Jörg Zimmermann for the valuable discussions throughout the creation process. 
The present work is part of the SEEDS project, funded by \textit{Deutsche Forschungsgemeinschaft (DFG)}, project number 354645666.

\appendix
\section{Spaces and Norms}
	Our gammoid approach enables us to generalize 
	concepts from compressed sensing \cite{donoho_optimally_2003, candes_robust_2006} of 
	matrices to and dynamic systems. Let us start with the matrix case of classical compressed
	sensing:
	
	For a given matrix 
	$A\in\mathbb{R}^{P\times N}$, $M>>P$, and given $\vec{y}\in\mathbb{R}^P$, solve
	\begin{equation}
		A\vec{x} = \vec{y}
	\end{equation}
	for $\vec{x}$. We will refer to this as the \textit{static problem}. In contrast to that we consider the 
	problem: For a given input-output map $\Phi:\mathcal{U}\to \mathcal{Y}$ where $\mathcal{U}
	=\mathcal{U}_1\oplus  \ldots \oplus \mathcal{U}_N$ and 
	$\mathcal{Y}=\mathcal{Y}_1 \oplus \ldots \oplus \mathcal{Y}_P$, $\mathcal{U}_i,\mathcal{Y}_j$ 
	function spaces, and given 
	$\vec{y}\in\mathcal{Y}$, solve
	\begin{equation}
		\Phi(\vec{u}) = \vec{y}
	\end{equation}
	for $\vec{u}$. This will be called the \textit{dynamic problem}. In fact, until we make use of the gammoid 
	structure of a dynamic input-output system, $\mathcal{U}_i$ and $\mathcal{Y}_j$ can be arbitrary Banach 
	spaces and $\Phi$ any operator. Despite this more general validity, we will throughout call 
	$\mathcal{U}$ the input space and $\mathcal{Y}$ the output space. 
	The \textit{sparse sensing problem}, either static as considered or dynamic, is understood as finding 
	the sparsest 
	solution of the static or dynamic problem, i.e., a solution $\hat{\vec{u}}$ for which most of the components
	$\hat{u}_i$ are identically zero.
	Assume a component, say $\hat{u}_1$, has nonzero norm. 
	Then $\tilde{\vec{u}}=(0,\hat{u}_2,\ldots,\hat{u}_L)$ can be considered a compression of 
	$\hat{\vec{u}}$, since $\tilde{\vec{u}}$ is sparser than $\hat{\vec{u}}$.
	If we now find, that the compressed vector yields
	\begin{equation}
		\Vert \Phi(\tilde{\vec{u}}) - \vec{y} \Vert < \epsilon
	\end{equation}
	for some given $\epsilon$, that is, if it reproduces the desired output with sufficient 
	accuracy, we call 
	it a solution of the \textit{compressed sensing problem}.
	The static sparse and compressed sensing problem have been developed into various directions, see for 
	example \cite{yonina_c_eldar_compressed_nodate} and \cite{foucart_mathematical_2013} which both are 
	good compilations of the field and from where we borrow some notations.
	A crucial step towards a dynamic compressed sensing will be to 
	define appropriate norms on the spaces $\mathcal{U}$ and $\mathcal{Y}$.

\subsection*{Input Space}
	Henceforth we will assume the input component $u_i\in\mathcal{U}_i$ to lie in $L^p([0,T])$ for a fixed 
	$p$ and be piecewise $C^\infty[0,T]$.
\subsubsection*{Underline Notation}
	We first introduce the underline notation
	\begin{equation}
		\underline{u_i} := \Vert u_i \Vert_{L^p}
	\end{equation}
	and for the whole vector
	$\vec{u}\in \mathcal{U}$ we write
	\begin{equation}
		\underline{\vec{u}} := \begin{pmatrix}
			\underline{u_1} \\ \vdots \\ \underline{u_L}
		\end{pmatrix} \, ,
	\end{equation}
	so the underline operator maps $\vec{u}$ to a vector $\underline{\vec{u}}\in\mathbb{R}^N$. In the more 
	general setting, $\mathcal{U}_i$ is only assumed to be a Banach space with some norm $\Vert \cdot 
	\Vert_{\mathcal{U}_i}$ and the underline maps according to this norm. 
	The underline notation will help clarify our understanding of sparsity and 
	will be useful for the formulation of theorems and proofs.
	As the underline is basically a norm, it inherits the properties of a norm.	
	The following lemma merely represent calculation rules for underlined vectors.
	%
	% Lemma 
	%
	\begin{lemma} \label{lemma:underlinezero}
		\begin{enumerate}
			\item For $u_i\in\mathcal{U}_i$ we find 
			$\underline{u_i} =0 \,  \Leftrightarrow \, u_i(t) =0 \text{ a.e. in } [0,T] $ .
			\item For $u_i,v_i\in\mathcal{U}_i$ we find 
			$\underline{u_i+v_i} \leq \underline{u_i} + \underline{v_i}$.
			\item For $u_i\in\mathcal{U}_i$ and $a\in\mathbb{R}$ we find $\underline{a u_i}=\vert a\vert 
			\underline{u_i}$.
		\end{enumerate}				
	\end{lemma}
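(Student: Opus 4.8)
The plan is to recognize that the underline operator is, by its very definition, the $L^p$-norm $\underline{u_i} = \Vert u_i \Vert_{L^p}$ on $L^p([0,T])$ for the fixed exponent $p \in [1,\infty)$, so that all three assertions reduce to standard properties of the Lebesgue norm. I would verify each item directly from the integral definition rather than appeal to abstract norm axioms, since the stated purpose of the lemma is to record these calculation rules explicitly for later use.

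For item 1, I would argue that $\underline{u_i} = 0$ means $\int_0^T |u_i(t)|^p\,dt = 0$. Since the integrand $|u_i|^p$ is nonnegative and measurable, its integral vanishes if and only if $|u_i(t)|^p = 0$ almost everywhere, which is equivalent to $u_i(t) = 0$ almost everywhere on $[0,T]$. This is the only point at which the almost-everywhere identification intrinsic to $L^p$ enters, and it accounts for the ``a.e.'' qualifier in the statement. For item 3, I would simply pull the scalar out of the integral: $\underline{a u_i}^{\,p} = \int_0^T |a|^p\,|u_i(t)|^p\,dt = |a|^p \int_0^T |u_i(t)|^p\,dt$, and taking $p$-th roots yields $\underline{a u_i} = |a|\,\underline{u_i}$ via $(|a|^p)^{1/p} = |a|$.

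For item 2, the triangle inequality $\underline{u_i + v_i} \le \underline{u_i} + \underline{v_i}$ is precisely Minkowski's inequality for $L^p([0,T])$, which I would invoke directly rather than reprove. I expect this to be the only step that is not a one-line computation, and it is also the one place where the hypothesis $p \ge 1$ is genuinely required: for $0 < p < 1$ subadditivity fails, so the restriction $p \in [1,\infty)$ imposed earlier in the paper is essential here rather than cosmetic. Taken together, items 1--3 are exactly positive-definiteness, absolute homogeneity, and subadditivity, so the lemma simply confirms that the underline inherits the full structure of a norm, as anticipated in the text preceding the statement.
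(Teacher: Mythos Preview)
Your proposal is correct and follows essentially the same approach as the paper: both proofs simply unwind the definition $\underline{u_i} = \Vert u_i \Vert_{L^p}$ and read off the three assertions as the standard positive-definiteness, triangle inequality, and homogeneity of the $L^p$-norm. Your version is slightly more explicit (spelling out the integral for items 1 and 3 and naming Minkowski for item 2, including the observation that $p\ge 1$ is needed there), whereas the paper simply cites ``the definition of the $L^p$ spaces'' and ``the $L^p$-norm is homogeneous'' without further computation.
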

	%
	% Proof
	%
	\begin{proof}
		\begin{enumerate}
		\item
		From the definition of the $L^p$ spaces we get
		\begin{equation}
			\Vert u_i \Vert_{L^p} = 0 \, \Leftrightarrow \, u_i = 0 \text{ a.e.}
		\end{equation}
		\item 
		Again from the definition we find
		\begin{equation}
			\underline{u_i + v_i} = \Vert u_i +v_i \Vert_{L^p} \leq 
			\Vert u_i \Vert_{L^p} + \Vert v_i \Vert_{L^p} = \underline{u_i} + \underline{v_i} \, .
		\end{equation}
		\item Since the $L^p$-norm is homogeneous we have
		\begin{equation}
			\underline{a u_i} = \Vert a u_i \Vert_{L^p} = \vert a\vert \Vert u_i \Vert_{L^p} = 
			\vert a\vert \underline{u_i} \, .
		\end{equation}
		\end{enumerate}
	\end{proof}
\subsubsection*{Proper Norm for Inputs}
	Utilizing the underline notation, we are ready to define the $q$-norm on $\mathcal{U}$ as
	\begin{equation}
		\Vert \vec{u} \Vert_q := \Vert \underline{\vec{u}} \Vert_q
	\end{equation}		
	where on the right hand side the standard $q$-norm on $\mathbb{R}^L$ is understood.
	%
	% Proposition
	%
	\begin{proposition}	\label{prop:properNorm}
		The $q$-norm on $\mathcal{U}$ is a proper norm.
	\end{proposition}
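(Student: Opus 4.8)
The plan is to verify the three defining axioms of a norm for $\Vert\cdot\Vert_q$ one at a time, in each case reducing the statement to the corresponding property of the standard $q$-norm on $\mathbb{R}^L$ combined with the componentwise calculation rules for the underline collected in Lemma~\ref{lemma:underlinezero}.

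For positive definiteness, I would argue that $\Vert\vec{u}\Vert_q = \Vert\underline{\vec{u}}\Vert_q = 0$ forces $\underline{\vec{u}} = \vec{0}$ in $\mathbb{R}^L$, because the $q$-norm on $\mathbb{R}^L$ is already known to be definite. This means $\underline{u_i}=0$ for every $i$, and by part~1 of Lemma~\ref{lemma:underlinezero} this is equivalent to $u_i = 0$ almost everywhere for each $i$, i.e.\ $\vec{u}=\vec{0}$ as an element of $\mathcal{U}$. The reverse implication is immediate. For homogeneity, I would apply part~3 of Lemma~\ref{lemma:underlinezero} to each component to obtain $\underline{a\vec{u}} = |a|\,\underline{\vec{u}}$ as an identity of vectors in $\mathbb{R}^L$, and then extract the scalar using the homogeneity of the $q$-norm on $\mathbb{R}^L$:
\begin{equation}
    \Vert a\vec{u}\Vert_q = \Vert\underline{a\vec{u}}\Vert_q = \Vert\,|a|\,\underline{\vec{u}}\,\Vert_q = |a|\,\Vert\underline{\vec{u}}\Vert_q = |a|\,\Vert\vec{u}\Vert_q \, .
\end{equation}

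The triangle inequality is the step where the two layers of norms genuinely interact, and this is where I expect the only real subtlety. Part~2 of Lemma~\ref{lemma:underlinezero} supplies only a componentwise bound $\underline{u_i+v_i} \le \underline{u_i}+\underline{v_i}$, so that $\underline{\vec{u}+\vec{v}} \le \underline{\vec{u}}+\underline{\vec{v}}$ holds entrywise between non-negative vectors in $\mathbb{R}^L$ --- the underline is merely subadditive, not additive. To pass from this entrywise bound to an inequality of $q$-norms I would first invoke the monotonicity of the $q$-norm on the non-negative orthant (if $0\le a_i \le b_i$ for all $i$ then $\Vert\vec{a}\Vert_q \le \Vert\vec{b}\Vert_q$, since each summand satisfies $a_i^q \le b_i^q$), and only then apply the ordinary triangle inequality of the $q$-norm on $\mathbb{R}^L$:
\begin{equation}
    \Vert\vec{u}+\vec{v}\Vert_q = \Vert\underline{\vec{u}+\vec{v}}\Vert_q \le \Vert\underline{\vec{u}}+\underline{\vec{v}}\Vert_q \le \Vert\underline{\vec{u}}\Vert_q + \Vert\underline{\vec{v}}\Vert_q = \Vert\vec{u}\Vert_q + \Vert\vec{v}\Vert_q \, .
\end{equation}

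The main obstacle is thus recognizing that the subadditivity coming from Lemma~\ref{lemma:underlinezero} must be bridged by the monotonicity of the outer $\ell^q$ norm before the genuine triangle inequality on $\mathbb{R}^L$ can be brought to bear; everything else is a direct transfer of the $\mathbb{R}^L$ norm axioms through the underline. I would emphasize that the composite map $\vec{u}\mapsto\Vert\underline{\vec{u}}\Vert_q$ is not automatically a norm just because each component $\underline{\cdot}=\Vert\cdot\Vert_{L^p}$ is, which is exactly why the three axioms have to be checked explicitly rather than merely asserted.
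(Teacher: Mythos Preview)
Your proposal is correct and follows essentially the same route as the paper: verify the three norm axioms by reducing each to the corresponding property of the $q$-norm on $\mathbb{R}^L$ via Lemma~\ref{lemma:underlinezero}. The only cosmetic difference is that where you name the intermediate step as ``monotonicity of the $q$-norm on the non-negative orthant,'' the paper writes out the sum $\left(\sum_i(\underline{u_i+v_i})^q\right)^{1/q}$ explicitly and bounds it termwise by $(\underline{u_i}+\underline{v_i})^q$ --- which is exactly the same argument unpacked.
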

	%
	% Proof
	%
	\begin{proof}
		The $L^p$ norm and the $q$-norm on 
		$\mathbb{R}^N$ both are proper norms that fulfil the three properties 
		\textit{positive definiteness}, 
		the \textit{triangle property}, and \textit{homogeneity}. 
		Since the $q$-norm on $\mathcal{U}$ is a combination of these two norms, it becomes clear, that it again 
		fulfils these properties.
		\begin{enumerate}
			\item Positive definiteness. We find
			\begin{equation}
				\Vert \vec{u} \Vert_q = \Vert \underline{\vec{u}} \Vert_q \geq 0
			\end{equation}
			where the inequality comes from the fact, that we have a proper norm on $\mathbb{R}^L$. 
			Equality holds 
			if and only if $\underline{\vec{u}}=0$. Due to lemma \ref{lemma:underlinezero} this is the case 
			if and only if $\vec{u}=0$.
			\item Triangle inequality. Let $\vec{u},\vec{v}\in \mathcal{U}$. 
			\begin{equation}
				\Vert \vec{u} + \vec{v} \Vert_q = 
				\Vert \underline{\vec{u} +\vec{v}} \Vert_q =\left( 
				\sum_{i=1}^N (\underline{u_i+v_i} )^q \right)^{1/q}
			\end{equation}
			From lemma \ref{lemma:underlinezero} one can see, that
			\begin{equation}
				(\underline{u_i + v_i})^q \leq (\underline{u_i}+\underline{v_i})^q
			\end{equation}
			so we get
			\begin{equation}
				\Vert \vec{u} + \vec{v} \Vert_q \leq \left( 
				\sum_{i=1}^N (\underline{u_i}+\underline{v_i} )^q \right)^{1/q} = 
				\Vert \underline{\vec{u}} + \underline{\vec{v}} \Vert_q
			\end{equation}
			and since the latter is a proper $q$-norm on $\mathbb{R}^L$
			\begin{equation}
			\Vert \vec{u} + \vec{v} \Vert_q \leq \Vert \underline{\vec{u}} \Vert_q + \Vert 
			\underline{\vec{v}} \Vert_q = \Vert \vec{u} \Vert_q + \Vert \vec{v} \Vert_q \, .
			\end{equation}
			\item 
			Finally we proof homogeneity. Let $\vec{u}\in \mathcal{U}$ and $a\in\mathbb{R}$.
			\begin{equation}
				\Vert a \vec{u} \Vert_q = \Vert \underline{a \vec{u}} \Vert_q 
				= \left( \sum_{i=1}^L ( \underline{a u_i})^q \right)^{1/q}
				=\left( \sum_{i=1}^L ( \vert a \vert \underline{ u_i})^q \right)^{1/q}
				= \vert a \vert  \left( \sum_{i=1}^L ( \underline{ u_i})^q \right)^{1/q}
				= \vert a \vert \Vert \vec{u} \Vert_q
			\end{equation}
		\end{enumerate}
	\end{proof}		
	For $q=0$ one derives a situation comparable to the ``$0$-norm'' on $\mathbb{R}^L$. First, 
	\begin{equation}
		\Vert \vec{u} \Vert_0 := \Vert \underline{\vec{u}} \Vert_0
	\end{equation}
	counts the non-zero components of $\underline{\vec{u}}$. Due to lemma \ref{lemma:underlinezero}, a 
	component of $\underline{\vec{u}}$ is zero if and only if the corresponding component of $\vec{u}$ is zero, 
	so $\Vert \vec{u} \Vert_0$ is indeed a ``$0$-norm'' on $\mathcal{U}$.
\subsubsection*{Support of an Input}
	Similar to \cite{yonina_c_eldar_compressed_nodate}, for an index set 
	$\Lambda \subseteq \{1,\ldots , N\}$ we write $\vec{u}_\Lambda$ for the 
	vector
	\begin{equation}
		(\vec{u}_\Lambda)_i = \left\{ \begin{aligned} u_i &\text{ if }i\in \Lambda \\
		0 &\text{ if }i\not\in \Lambda  \end{aligned} \right. 
	\end{equation}
	and with $\Lambda^c$ we denote the complement in $\{1,\ldots,N\}$. 
	As in \cite{foucart_mathematical_2013} we call $\Lambda$ the \emph{support} of $\vec{u}$, if $\Lambda$ 
	is of minimal cardinality and $\vec{u}_\Lambda = 
	\vec{u}$. 
	Let the index set $\Lambda$ be given, then
	\begin{equation}
		\mathcal{U}_\Lambda := \{\vec{u}\in\mathcal{U} \, | \, \text{supp}\,\vec{u} \subseteq \Lambda \}
	\end{equation}
	is understood as a restriction of the input space. 
	
	If the operator $\Phi$ is linear, then it can be written as a 
	matrix with components $\Phi_{ji}:\mathcal{U}_i\to\mathcal{Y}_j$. The restriction of the domain 
	\begin{equation}
		\Phi : \mathcal{U}_\Lambda \to \mathcal{Y}
	\end{equation}
	is then equivalent to deleting the columns of $\Phi$ with index not included in $\Lambda=(\lambda_1,
	\ldots,\lambda_M)$. We can also interpret it as a new operator $\Phi_{\Lambda}$ with unrestricted domain,
	\begin{equation}
		\Phi_\Lambda : \mathcal{U}_{\lambda_1} \oplus \ldots \mathcal{U}_{\lambda_M} \to \mathcal{Y} \, .
	\end{equation}
	If $\mathcal{U}=\mathbb{R}^L$ and $\mathcal{Y}=\mathbb{R}^P$, then $\Phi_\Lambda$ is simply a 
	$\mathbb{R}^{P\times M}$ matrix. This case coincides with the static problem and was first 
	considered in \cite{donoho_compressed_2006}. We will make use of the following 
	notation from \cite{yonina_c_eldar_compressed_nodate}. 
	
	The ``$0$-norm'' can also be expressed via the support,
	\begin{equation}
		\text{card}\,\text{supp}\, \vec{u}= \Vert \vec{u} \Vert_0 \, .
	\end{equation}
	The set of $k$-sparse inputs is understood as 
	\begin{equation}
		\Sigma_k := \{ \vec{u}\in \mathcal{U} \,|\,\Vert \vec{u} \Vert_0 \leq k   \} \, .
	\end{equation}
	One will see, that $\Sigma_k$ is the union of all $\mathcal{U}_\Lambda$ with 
	$\text{card}\, \Lambda \leq k$.

	The three following facts can be proven directly:
	Let $\Lambda_0$ and $\Lambda_1$ be two index sets with 
	$\text{card}\, \Lambda_0 = \text{card}\, \Lambda_1 =k$ and let $\vec{u}\in \mathcal{U}$. We find 
	\begin{equation}
		\vec{u}_{\Lambda_0},\vec{u}_{\Lambda_1} \in \Sigma_k
	\end{equation}
	and 
	\begin{equation}
		\vec{u}_{\Lambda_0}+\vec{u}_{\Lambda_1} \in \Sigma_{2k} \, .
	\end{equation}
	If $\Lambda_0$ and $\Lambda_1$ are disjoint we also get
	\begin{equation}
		\vec{u}_{\Lambda_0} + \vec{u}_{\Lambda_1} = \vec{u}_{\Lambda_0 \cup \Lambda_1} \, . 
		\label{eq:LambdaDisjoint}
	\end{equation}
	%
	% Lemma
	%
	\begin{lemma} \label{lemma:disjointSupport}
		Let $\vec{u},\vec{v}\in\mathcal{U}$ with disjoint support, then 
		\begin{equation}
			\underline{\vec{u} \pm \vec{v}} = \underline{ \vec{u} } + \underline{\vec{v}} 
			\label{eq:l1}
		\end{equation}
		and for the $q$-norm 
		\begin{equation}
			\Vert \underline{\vec{u}} - \underline{\vec{v}} \Vert_q = \Vert 
			\vec{u}\pm \vec{v} \Vert_q = \Vert \underline{\vec{u}} + \underline{\vec{v}} \Vert_q
		\end{equation}
	\end{lemma}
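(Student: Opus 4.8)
The plan is to reduce both displayed identities to component-wise statements and exploit the disjoint-support hypothesis, which guarantees that for every index $i$ at most one of $u_i$ and $v_i$ is nonzero almost everywhere. Accordingly the argument splits into a function-level part (the first identity, living in the $L^p$ norm) and a vector-level part (the second identity, living in the $q$-norm on $\mathbb{R}^L$), and it is worth keeping these two norm layers carefully distinct throughout.

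First I would establish the identity $\underline{\vec{u}\pm\vec{v}} = \underline{\vec{u}}+\underline{\vec{v}}$ by comparing the two sides entry by entry. The $i$-th entry on the left is $\Vert u_i \pm v_i\Vert_p$ and on the right is $\Vert u_i\Vert_p + \Vert v_i\Vert_p$. By disjointness of the supports, for each fixed $i$ either $u_i = 0$ or $v_i = 0$ almost everywhere; invoking Lemma \ref{lemma:underlinezero}(1) to replace the vanishing component by the zero function and Lemma \ref{lemma:underlinezero}(3) to absorb the sign (since $\vert \pm 1\vert = 1$), the surviving term equals the corresponding nonzero norm. Thus in every component the $\pm$ produces no cancellation and the general triangle inequality of Lemma \ref{lemma:underlinezero}(2) is saturated to an equality. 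This settles the first displayed equation for both signs at once.

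Next I would read off the middle-equals-right part of the second identity directly from the definition of the $q$-norm on $\mathcal{U}$, namely $\Vert \vec{u}\pm\vec{v}\Vert_q = \Vert \underline{\vec{u}\pm\vec{v}}\Vert_q$, and substitute the first identity to obtain $\Vert \underline{\vec{u}} + \underline{\vec{v}}\Vert_q$. For the remaining left-equals-right part I would argue again component-wise, now in $\mathbb{R}^L$: each entry $\underline{u_i}$ and $\underline{v_i}$ is nonnegative, and by disjoint support at least one of them is zero, so $\vert \underline{u_i} - \underline{v_i}\vert = \underline{u_i} + \underline{v_i}$ for every $i$. Since the $q$-norm on $\mathbb{R}^L$ depends only on the absolute values of the entries, the vectors $\underline{\vec{u}} - \underline{\vec{v}}$ and $\underline{\vec{u}} + \underline{\vec{v}}$ share the same $q$-norm, which closes the chain of equalities.

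There is no genuine obstacle here; the entire content of the lemma is that disjoint support eliminates all cancellation, so that the triangle inequality holds with equality at both the $L^p$ level and the $\mathbb{R}^L$ level. The only point requiring attention is to treat the two signs $\pm$ uniformly — which the homogeneity rule of Lemma \ref{lemma:underlinezero}(3) makes automatic — and to remember that the first identity is an equation between vectors of $L^p$ norms, whereas the second is an equation between the scalars obtained by applying the outer $q$-norm.
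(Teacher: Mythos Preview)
Your proposal is correct and follows essentially the same approach as the paper: a component-wise case split using the disjoint-support hypothesis to handle the first identity (with the sign absorbed via homogeneity), and then the definition of the $q$-norm together with the observation that $|\underline{u_i}-\underline{v_i}| = \underline{u_i}+\underline{v_i}$ whenever one term vanishes to close the second chain of equalities. The only cosmetic difference is that the paper spells out the three cases ($i$ in the support of $\vec u$, of $\vec v$, or of neither) explicitly.
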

	%
	% Proof
	%
	\begin{proof}
		If $i$ is in the support of $\vec{u}$, then $v_i=0$ and equation \eqref{eq:l1} reduces to
		\begin{equation}
			\underline{u_i} = \underline{u_i} 
		\end{equation}
		which is true.
		If $i$ is in the support of $\vec{v}$, then $u_i=0$. Equation \eqref{eq:l1} together with lemma 
		\ref{lemma:underlinezero} then becomes
		\begin{equation}
			\underline{\pm v_i}= \vert \pm 1 \vert \underline{v_i} = \underline{v_i}\label{eq:l2}
		\end{equation}
		which also holds true. 
		If $i$ is in neither support, the equation becomes trivial.
		
		For equation \eqref{eq:l2} note, that
		\begin{equation}
			\Vert \vec{u} \pm \vec{v} \Vert_q = \Vert \underline{\vec{u} \pm \vec{v}} \Vert_q =
			\Vert \underline{\vec{u}} + \underline{\vec{v}} \Vert_q
		\end{equation}
		where the first equality is clear by definition and the second equality was just proven.
				
		It remains to show, that for any two vectors $\vec{x},\vec{y}\in\mathbb{R}^N$ with disjoint support 
		one gets
		\begin{equation}
			\Vert \vec{x} - \vec{y} \Vert_q = \Vert \vec{x} + \vec{y} \Vert_q \, .
		\end{equation}
		However due to the disjoint support $\vert x_i +y_i \vert^q = \vert x_i\vert^q + \vert y_i \vert^q =
		\vert x_i-y_i \vert^q$ since at least one of the two terms is zero. One finds
		\begin{equation}
			\Vert \vec{x} - \vec{y} \Vert_q^q =\sum_{i=1}^N \vert x_i-y_1 \vert^q =
			\sum_{i=1}^N \vert x_i+y_1 \vert^q = \Vert \vec{x} + \vec{y} \Vert_q^q \, . 
		\end{equation}
	\end{proof}
	The following proposition for the case, where $\vec{u},\vec{v}$ are $\mathbb{R}^N$ vectors, stems from 
	\cite{bergh_interpolation_1976} and has already been used for the static problem in
	\cite{donoho_compressed_2006}. We proof its validity for $\vec{u},\vec{v}$ being input vectors of a 
	dynamic system.
	%
	% Proposition
	%
	\begin{proposition} \label{prop:disjointSupport}
		Let $\vec{u},\vec{v}\in\mathcal{U}$ with disjoint support, then 
		\begin{equation}
			\Vert \vec{u} + \vec{v} \Vert_q^q = \Vert \vec{u} \Vert_q^q + \Vert \vec{v} \Vert_q^q \, . 
		\end{equation}
	\end{proposition}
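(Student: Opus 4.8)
The plan is to reduce the claim entirely to the finite-dimensional statement about the underline vectors $\underline{\vec{u}},\underline{\vec{v}}\in\mathbb{R}^N$, where the $q$-th power of the $q$-norm is patently additive on disjoint supports. Everything else is a matter of unfolding the definitions established earlier in this appendix.

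First I would unfold the definition of the $q$-norm on $\mathcal{U}$, which gives $\Vert \vec{u} + \vec{v} \Vert_q^q = \Vert \underline{\vec{u} + \vec{v}} \Vert_q^q$. The next step is to rewrite the underline of the sum: since $\vec{u}$ and $\vec{v}$ have disjoint support, Lemma~\ref{lemma:disjointSupport} applies and yields $\underline{\vec{u} + \vec{v}} = \underline{\vec{u}} + \underline{\vec{v}}$. Hence the problem becomes the identity $\Vert \vec{u} + \vec{v} \Vert_q^q = \Vert \underline{\vec{u}} + \underline{\vec{v}} \Vert_q^q$ about two ordinary real vectors.

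The crucial observation is that disjointness of support transfers from $\vec{u},\vec{v}$ to their underlines: by Lemma~\ref{lemma:underlinezero}(1) we have $\underline{u_i} = 0$ exactly when $u_i = 0$ almost everywhere, so $\underline{\vec{u}}$ and $\underline{\vec{v}}$ vanish on complementary index sets whenever $\vec{u}$ and $\vec{v}$ do. With this in hand, the final step is the elementary finite-dimensional calculation: writing $\Vert \underline{\vec{u}} + \underline{\vec{v}} \Vert_q^q = \sum_{i=1}^N (\underline{u_i} + \underline{v_i})^q$ and using that at every index at most one of the two non-negative summands is non-zero, each term collapses to $(\underline{u_i})^q + (\underline{v_i})^q$. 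Summing then gives $\Vert \underline{\vec{u}} \Vert_q^q + \Vert \underline{\vec{v}} \Vert_q^q = \Vert \vec{u} \Vert_q^q + \Vert \vec{v} \Vert_q^q$, which is the assertion.

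I do not expect a genuine obstacle here: the entire content is already latent in Lemmas~\ref{lemma:underlinezero} and~\ref{lemma:disjointSupport}, and the disjoint-support additivity of $\Vert \cdot \Vert_q^q$ on $\mathbb{R}^N$ was in fact established inside the proof of Lemma~\ref{lemma:disjointSupport}. The only point requiring a moment's care is the transfer of disjointness to the underline vectors via Lemma~\ref{lemma:underlinezero}(1); everything else is definitional bookkeeping.
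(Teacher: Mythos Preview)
Your proposal is correct and follows essentially the same route as the paper: unfold the $q$-norm via the underline, apply Lemma~\ref{lemma:disjointSupport} to replace $\underline{\vec{u}+\vec{v}}$ by $\underline{\vec{u}}+\underline{\vec{v}}$, and then use disjoint support componentwise to split $(\underline{u_i}+\underline{v_i})^q$ into $(\underline{u_i})^q+(\underline{v_i})^q$. If anything, you are slightly more careful than the paper in making explicit that disjointness of support passes to the underline vectors via Lemma~\ref{lemma:underlinezero}(1).
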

	%
	% Proof
	%
	\begin{proof}
		With lemma \ref{lemma:disjointSupport} we find
		\begin{equation}
			\Vert \vec{u} + \vec{v} \Vert_q^q = \Vert \underline{ \vec{u} + \vec{v}} \Vert_q^q 
			= \Vert \underline{\vec{u}} + \underline{\vec{v}} \Vert_q^q = \sum_{i=1}^N (\underline{u_i} 
			+\underline{v_i})^q \, .
		\end{equation}
		Again due to the disjoint support we can write
		\begin{equation}
			(\underline{u_i}+\underline{v_i})^q = (\underline{u_i})^q +(\underline{v_i})^q
		\end{equation}
		to get
		\begin{equation}
			\Vert \vec{u} + \vec{v} \Vert_q^q =   \sum_{i=1}^N (\underline{u_i} )^q
			+\sum_{i=1}^N (\underline{v_i})^q  = \Vert \vec{u} \Vert_q^q + \Vert \vec{v} \Vert_q^q \, .
		\end{equation}
	\end{proof}
	We close the investigation of the input space with three lemmas on norm-inequalities. 
	For $\vec{u}$ being a $\mathbb{R}^N$ vector, these lemmas are proven in
	\cite{yonina_c_eldar_compressed_nodate}. For our purposes, it is necessary to prove the validity for 
	the $q$-norms on composite Banach spaces.
	%
	% Lemma
	%
	\begin{lemma}\label{lemma:sqrtk}
		For $\vec{u}\in \Sigma_k$ we find
		\begin{equation}
			\frac{1}{\sqrt{k}} \Vert \vec{u} \Vert_1 \leq \Vert \vec{u} \Vert_2 \leq \sqrt{k} \Vert \vec{u} 
			\Vert_\infty \, .
		\end{equation}
	\end{lemma}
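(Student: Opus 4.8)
The plan is to transport the whole statement to the finite-dimensional vector $\underline{\vec{u}} \in \mathbb{R}^L$ and invoke the classical norm inequalities for sparse real vectors. By the definition of the $q$-norm on $\mathcal{U}$ we have $\Vert \vec{u} \Vert_q = \Vert \underline{\vec{u}} \Vert_q$ for every $q \in \{1,2,\infty\}$, and by Lemma~\ref{lemma:underlinezero} the entry $\underline{u_i}$ vanishes precisely when $u_i = 0$ almost everywhere. Consequently $\Vert \vec{u} \Vert_0 = \Vert \underline{\vec{u}} \Vert_0 \le k$, so $\underline{\vec{u}}$ is a genuinely $k$-sparse vector in $\mathbb{R}^L$. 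Writing $\Lambda := \text{supp}\,\vec{u}$ with $\text{card}\,\Lambda \le k$, both claimed inequalities reduce to the well-known relations between the $1$-, $2$- and $\infty$-norms of a vector supported on $\Lambda$.

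For the right-hand inequality I would estimate the $2$-norm by the supremum over the support:
\[
	\Vert \vec{u} \Vert_2^2 = \Vert \underline{\vec{u}} \Vert_2^2 = \sum_{i \in \Lambda} (\underline{u_i})^2 \le \text{card}\,\Lambda \cdot \max_i (\underline{u_i})^2 \le k\, \Vert \underline{\vec{u}} \Vert_\infty^2 = k\, \Vert \vec{u} \Vert_\infty^2 \, ,
\]
and taking square roots gives $\Vert \vec{u} \Vert_2 \le \sqrt{k}\, \Vert \vec{u} \Vert_\infty$. For the left-hand inequality I would apply Cauchy–Schwarz on the support, writing each surviving term as $\underline{u_i} \cdot 1$:
\[
	\Vert \vec{u} \Vert_1 = \sum_{i \in \Lambda} \underline{u_i} \le \Big( \sum_{i \in \Lambda} 1 \Big)^{1/2} \Big( \sum_{i \in \Lambda} (\underline{u_i})^2 \Big)^{1/2} = \sqrt{\text{card}\,\Lambda}\; \Vert \underline{\vec{u}} \Vert_2 \le \sqrt{k}\, \Vert \vec{u} \Vert_2 \, ,
\]
which rearranges to $\tfrac{1}{\sqrt{k}} \Vert \vec{u} \Vert_1 \le \Vert \vec{u} \Vert_2$.

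I do not expect any substantive obstacle here. The only point requiring care is that the $q$-norms on the composite Banach space $\mathcal{U}$ are defined \emph{through} the underline map, so that both the norm values and the sparsity count must be pushed forward to $\mathbb{R}^L$ faithfully; this is exactly what Lemma~\ref{lemma:underlinezero} guarantees. Once the argument is on $\mathbb{R}^L$, the statement is the standard norm-equivalence for $k$-sparse vectors and the two elementary estimates above close the proof.
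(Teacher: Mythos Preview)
Your proof is correct and follows essentially the same route as the paper: push everything to $\underline{\vec{u}}\in\mathbb{R}^L$ via the underline map, use Lemma~\ref{lemma:underlinezero} to ensure the sparsity count is preserved, then apply Cauchy--Schwarz over the support for the left inequality and bound each $(\underline{u_i})^2$ by $\Vert\underline{\vec{u}}\Vert_\infty^2$ for the right. The only cosmetic difference is that the paper writes the Cauchy--Schwarz step as $\Vert\vec{u}\Vert_1=\langle\underline{\vec{u}},\mathrm{sgn}\,\underline{\vec{u}}\rangle\le\Vert\underline{\vec{u}}\Vert_2\,\Vert\mathrm{sgn}\,\underline{\vec{u}}\Vert_2$, which is identical to your pairing with the constant $1$ on the support since the entries $\underline{u_i}$ are nonnegative.
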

	%
	% Proof
	%
	\begin{proof}
		Let $\langle \cdot , \cdot \rangle$ denote the standard scalar product on $\mathbb{R}^N$. We 
		can write
		\begin{equation}
			\Vert \vec{u} \Vert_1 = \Vert \underline{ \vec{u}} \Vert_1 = \langle \underline{u} ,
			\text{sgn}\, \underline{\vec{u}}\rangle \leq \Vert \underline{\vec{u}} \Vert_2 \Vert \text{sgn}\, 
			\underline{\vec{u}} \Vert_2 \, ,
		\end{equation}
		where the latter inequality comes from Cauchy-Schwartz and the signum function is understood 
		component-wise. By assumption $\vec{u}$ is $k$-sparse, hence $\Vert \text{sgn}\, \underline{\vec{u}}
		\Vert^2_2$ 
		is a sum of  at most $k$ ones. We obtain
		\begin{equation}
			\Vert \vec{u} \Vert_1 \leq \sqrt{k} \Vert \underline{\vec{u}} \Vert_2  = 
			\sqrt{k} \Vert \vec{u} \Vert_2 \, .
		\end{equation}
		One will see, that
		\begin{equation}
			\underline{u_i} \leq \Vert \underline{\vec{u}} \Vert_\infty = \Vert \vec{u} \Vert_\infty \, ,
		\end{equation}
		where both sides of the inequality are non-negative.
		Let $\Lambda = \text{supp}\, \vec{u}$, then
		\begin{equation}
			\Vert \vec{u} \Vert_2^2 = \Vert \underline{\vec{u}} \Vert_2^2 = 
			\sum_{i\in \Lambda} (\underline{u_i})^2 \leq \Vert \vec{u} \Vert_\infty^2 \sum_{i\in \Lambda} 1
			= k \Vert \vec{u} \Vert_\infty^2 \, .
		\end{equation}
		Taking the square-root leads to the desired inequality.
	\end{proof}
	%
	%
	% Lemma
	%
	\begin{lemma} \label{lemma:sqrt2}
		For $\vec{u},\vec{v}\in\mathcal{U}$ with disjoint support we find
		\begin{equation}
			\Vert \vec{u} \Vert_2 + \Vert \vec{v} \Vert_2 \leq \sqrt{2} \Vert \vec{u} + \vec{v} \Vert_2 \, .
		\end{equation}
	\end{lemma}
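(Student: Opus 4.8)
The plan is to reduce the claim about input vectors of a dynamic system to the corresponding elementary inequality for finite-dimensional real vectors, exactly as was done in the preceding lemmas. The key observation is that the $2$-norm on $\mathcal{U}$ is, by definition, $\Vert \vec{u} \Vert_2 = \Vert \underline{\vec{u}} \Vert_2$, where $\underline{\vec{u}} \in \mathbb{R}^N$ collects the component-wise $L^p$-norms. So the statement is really about the three real vectors $\underline{\vec{u}}$, $\underline{\vec{v}}$, and $\underline{\vec{u}+\vec{v}}$, and the disjointness of the supports of $\vec{u}$ and $\vec{v}$ carries over to disjointness of the supports of $\underline{\vec{u}}$ and $\underline{\vec{v}}$ via Lemma~\ref{lemma:underlinezero}(1).

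First I would invoke Lemma~\ref{lemma:disjointSupport} to replace $\Vert \vec{u} + \vec{v}\Vert_2$ with $\Vert \underline{\vec{u}} + \underline{\vec{v}}\Vert_2$, so that everything lives in $\mathbb{R}^N$ and I may assume WLOG that the underlined components are non-negative reals. Then the heart of the matter is the purely scalar inequality
\begin{equation}
a + b \leq \sqrt{2}\,\sqrt{a^2 + b^2} \qquad (a,b \geq 0),
\end{equation}
which follows from $(a-b)^2 \geq 0$, i.e. $2ab \leq a^2 + b^2$, hence $(a+b)^2 = a^2 + b^2 + 2ab \leq 2(a^2+b^2)$. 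Applying this with $a = \Vert \vec{u}\Vert_2 = \Vert \underline{\vec{u}}\Vert_2$ and $b = \Vert \vec{v}\Vert_2 = \Vert \underline{\vec{v}}\Vert_2$ gives the right-hand side, and the disjoint-support identity from Proposition~\ref{prop:disjointSupport} (or directly Lemma~\ref{lemma:disjointSupport}) supplies $a^2 + b^2 = \Vert \vec{u}\Vert_2^2 + \Vert \vec{v}\Vert_2^2 = \Vert \vec{u}+\vec{v}\Vert_2^2$, closing the chain.

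The cleanest route is therefore: (i) use disjointness to write $\Vert \vec{u}\Vert_2^2 + \Vert \vec{v}\Vert_2^2 = \Vert \vec{u}+\vec{v}\Vert_2^2$, and (ii) combine this with the scalar bound $\Vert \vec{u}\Vert_2 + \Vert \vec{v}\Vert_2 \leq \sqrt{2}\sqrt{\Vert \vec{u}\Vert_2^2 + \Vert \vec{v}\Vert_2^2}$. I expect there to be \emph{no} genuine obstacle here: the lemma is a soft consequence of the two-dimensional Cauchy--Schwarz (or AM--QM) inequality once the underline machinery collapses the function-space structure down to $\mathbb{R}^N$. The only point requiring a word of care is confirming that disjoint support of $\vec{u},\vec{v}$ as elements of $\mathcal{U}$ truly yields the disjoint-support decomposition of the squared $2$-norm, but this is precisely what Lemma~\ref{lemma:disjointSupport} and Proposition~\ref{prop:disjointSupport} were established to provide, so I would simply cite them rather than re-derive them.
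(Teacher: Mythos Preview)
Your proposal is correct and follows essentially the same route as the paper: both reduce to the scalar bound $a+b \le \sqrt{2}\sqrt{a^2+b^2}$ with $a=\Vert\vec{u}\Vert_2$, $b=\Vert\vec{v}\Vert_2$, and then invoke Proposition~\ref{prop:disjointSupport} to identify $a^2+b^2$ with $\Vert\vec{u}+\vec{v}\Vert_2^2$. The only cosmetic difference is that the paper packages the scalar inequality as an application of Lemma~\ref{lemma:sqrtk} to the $2$-sparse vector $\vec{x}=(\Vert\vec{u}\Vert_2,\Vert\vec{v}\Vert_2)^T\in\mathbb{R}^2$, whereas you derive it directly from $(a-b)^2\ge 0$.
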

	%
	% Proof
	%
	\begin{proof}
		Consider the $\mathbb{R}^2$ vector
		\begin{equation}
			\vec{x} := \begin{pmatrix}
				\Vert \vec{u} \Vert_2 \\ \Vert \vec{v} \Vert_2 
			\end{pmatrix} \, .
		\end{equation}
		Lemma \ref{lemma:sqrtk} holds also for constant vectors and by construction $\vec{x}$ is 
		$k=2$ sparse, thus
		\begin{equation}
			\Vert \vec{x} \Vert_1 \leq \sqrt{2} \Vert \vec{x} \Vert_2 \, .
		\end{equation}
		We can now replace 
		\begin{equation}
			\Vert \vec{x} \Vert_1 = \Vert \vec{u} \Vert_2 + \Vert \vec{v} \Vert_2 \, .
		\end{equation}
		For the right hand side we find
		\begin{equation}
			\Vert \vec{x} \Vert_2^2 = \Vert \vec{u} \Vert_2^2 + \Vert \vec{v} \Vert_2^2 = 
			 \Vert \vec{u} + \vec{v} \Vert_2^2 
		\end{equation}
		where the second equality comes from proposition \ref{prop:disjointSupport}. Combining the latter the 
		equations leads to the desired inequality.
	\end{proof}
	%
	%
	% Lemma
	%
	\begin{lemma} \label{lemma:LambdaSums}
		Let $\vec{u}\in\mathcal{U}$ an $\Lambda_0$ and index set of cardinality $k$. For better 
		readability we write $\vec{x}:=\underline{\vec{u}_{\Lambda_0^c}}$. Note, that $\vec{x}$ is a 
		$\mathbb{R}^N$ vector with non-negative components. Let $L = (l_1,\ldots, l_N)$ 
		a list of indices with $l_i \neq l_j$ for $i\neq j$ such that for the components of $\vec{x}$ we find
		\begin{equation}
			x_{l_1} \geq x_{l_2} \geq \ldots \geq x_{l_N} \, .
		\end{equation}
		Define index sets $\Lambda_1:=\{l_1,\ldots,l_k \}$, $\Lambda_2:=\{l_{k+1},\ldots ,
		l_{2k} \}$ and so forth until the whole list $L$ is covered. If $k$ is not a divisor of $N$ the last 
		index set would have less than $k$ elements. This can be fixed by appending $\vec{u}$ by zero elements.
		
		We find
		\begin{equation}
			\sum_{j\geq 2} \Vert \vec{u}_{\Lambda_j} \Vert_2 \leq  \frac{\Vert  \vec{u}_{\Lambda_0^c} 
			\Vert_1}{\sqrt{k}}
		\end{equation}
	\end{lemma}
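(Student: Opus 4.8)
The plan is to adapt the classical compressed-sensing estimate, reducing everything to the non-negative real vector $\vec{x} = \underline{\vec{u}_{\Lambda_0^c}}$ and exploiting that its entries are arranged in decreasing order across the blocks $\Lambda_1, \Lambda_2, \ldots$. The core of the argument is a block-to-block inequality that bounds the $2$-norm on block $\Lambda_{j+1}$ by the $1$-norm on the preceding block $\Lambda_j$; summing over $j$ then collapses to the claimed right-hand side by additivity of the $1$-norm over disjoint supports.

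First I would rewrite the $q$-norms on $\mathcal{U}$ in terms of $\vec{x}$. For each block $\Lambda_j$ with $j \geq 1$ we have $\Lambda_j \subseteq \Lambda_0^c$, so by the underline definition of the norms $\Vert \vec{u}_{\Lambda_j}\Vert_2 = (\sum_{i\in\Lambda_j} x_i^2)^{1/2}$ and $\Vert \vec{u}_{\Lambda_j}\Vert_1 = \sum_{i\in\Lambda_j} x_i$. This turns the statement into a purely finite-dimensional inequality about $\vec{x}$, with the component $L^p$-structure harmlessly hidden inside the underline.

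The key step is to bound each block in the $2$-norm by the preceding block in the $1$-norm. Since $\vec{u}_{\Lambda_{j+1}}$ is $k$-sparse, Lemma \ref{lemma:sqrtk} gives $\Vert \vec{u}_{\Lambda_{j+1}}\Vert_2 \leq \sqrt{k}\,\Vert \vec{u}_{\Lambda_{j+1}}\Vert_\infty$. The sorting of $\vec{x}$ ensures that every entry indexed by $\Lambda_{j+1}$ is at most every entry indexed by $\Lambda_j$, so $\Vert \vec{u}_{\Lambda_{j+1}}\Vert_\infty = \max_{\ell\in\Lambda_{j+1}} x_\ell \leq \frac{1}{k}\sum_{i\in\Lambda_j} x_i = \frac{1}{k}\Vert\vec{u}_{\Lambda_j}\Vert_1$. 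Combining the two bounds yields the block estimate
\[
\Vert \vec{u}_{\Lambda_{j+1}}\Vert_2 \leq \frac{1}{\sqrt{k}}\,\Vert \vec{u}_{\Lambda_j}\Vert_1 .
\]

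Finally, I would sum this inequality over all $j \geq 1$, which after reindexing gives $\sum_{j\geq 2}\Vert \vec{u}_{\Lambda_j}\Vert_2 \leq \frac{1}{\sqrt{k}}\sum_{j\geq 1}\Vert\vec{u}_{\Lambda_j}\Vert_1$. Because the blocks $\Lambda_1, \Lambda_2, \ldots$ are pairwise disjoint and exhaust $\Lambda_0^c$, Proposition \ref{prop:disjointSupport} gives $\sum_{j\geq 1}\Vert\vec{u}_{\Lambda_j}\Vert_1 = \Vert\vec{u}_{\Lambda_0^c}\Vert_1$, which is exactly the asserted bound. I do not anticipate a genuine obstacle here: the only points needing care are the harmless zero-padding of the final block (zeros change neither norm) and the averaging step, where one must remember that the maximum over $\Lambda_{j+1}$ is dominated by the \emph{minimum} over $\Lambda_j$ and hence by the average over $\Lambda_j$.
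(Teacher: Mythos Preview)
Your proposal is correct and follows essentially the same route as the paper: bound $\Vert \vec{u}_{\Lambda_{j+1}}\Vert_\infty$ by the average (hence $\tfrac{1}{k}\Vert\vec{u}_{\Lambda_j}\Vert_1$) of the previous block via the sorting, combine with Lemma~\ref{lemma:sqrtk}, sum over $j$, and collapse the right-hand side with Proposition~\ref{prop:disjointSupport}. The only cosmetic difference is that the paper indexes the block inequality as $j-1\to j$ rather than $j\to j+1$.
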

	%
	% Proof
	%
	\begin{proof}
		First note, that by construction all $\Lambda_i$ for $i=0,1,\ldots$ are pair-wise disjoint and that
		\begin{equation}
			\Lambda_0^c = \Lambda_1 \dot{\cup} \Lambda_2 \dot{\cup} \ldots
		\end{equation}
		For any $m\in \Lambda_{j-1}$ we get by construction
		\begin{equation}
			\underline{u_m} \geq \Vert \vec{u}_{\Lambda_j} \Vert_\infty \, . 
		\end{equation}
		We now take the sum over all $m\in \Lambda_{j-1}$
		\begin{equation}
			\Vert \vec{u}_{\Lambda_{j-1}} \Vert_1 \geq k \Vert \vec{u}_{\Lambda_j}\Vert_\infty \, .
		\end{equation}
		From lemma \ref{lemma:sqrtk} we have for each $j$
		\begin{equation}
			\Vert \vec{u}_{\Lambda_j} \Vert_2 \leq \sqrt{k} \Vert \vec{u}_{\Lambda_j} \Vert_\infty
		\end{equation}
		and in combination with the latter inequality 
		\begin{equation}
			\Vert \vec{u}_{\Lambda_j} \Vert_2 \leq \frac{1}{\sqrt{k}} \Vert \vec{u}_{\Lambda_{j-1}} \Vert_1 \, .
		\end{equation}
		We take the sum over $j\geq 2$
		\begin{equation}
			\sum_{j\geq 2} \Vert \vec{u}_{\Lambda_j} \Vert_2 \leq 
			\sum_{j\geq 2} \frac{1}{\sqrt{k}} \Vert \vec{u}_{\Lambda_{j-1}} \Vert_1 \, .
		\end{equation}
		In the right hand side we first perform an index shift and then use proposition 
		\ref{prop:disjointSupport} to see that
		\begin{equation}
			\sum_{j\geq 2} \Vert \vec{u}_{\Lambda_{j-1}} \Vert_1 = \sum_{j\geq 1} \Vert \vec{u}_{\Lambda_j}
			\Vert_1 = \Vert \vec{u}_{\Lambda_1\cup \Lambda_2\cup \ldots} \Vert_1 = \Vert \vec{u}_{\Lambda_0^c}
			\Vert_1  \, . 
		\end{equation}
	\end{proof}
	
\subsection*{Output Space}
\subsubsection*{Underline Notation}
	In analogy to the input space $\mathcal{U}$, the underline notation can be used for the output 
	space $\mathcal{Y}=\mathcal{Y}_1 oplus \ldots \oplus \mathcal{Y}_P$,
	\begin{equation}
		\underline{y_i} := \Vert y_i \Vert_{p'} \, .
	\end{equation}
	Here, we assume that each $y_i\in \mathcal{Y}_i$ is in $L^{p'}[0,T]$ an piecewise $C^\infty[0,T]$. 
	Principally, $p'$ does not have to match the parameter $p$ from the input spaces. It will be suppressed 
	from out notation as the result are valid for any fixed value of $p'$. Again it would also be 
	sufficient that $\mathcal{Y}_i$ is a Banach space. 
	The $q$-norm on $\mathcal{Y}$ is defined
	\begin{equation}
		\Vert \vec{y} \Vert_q := \Vert \underline{\vec{y}} \Vert_q \, .
	\end{equation}
	Clearly, all rules we have derived for underlined input vectors hold true for underlined output 
	vectors.

	The following lemma yields a last inequality for underlined vectors. 
	%
	% Lemma
	%	
	\begin{lemma} \label{lemma:2norminequality}
		Let $\vec{y},\vec{z}\in\mathcal{Y}$, then
		\begin{equation}
			\Vert \underline{\vec{y}} - \underline{\vec{z}} \Vert_2^2 \leq \Vert \vec{y} + \vec{z} \Vert_2^2 \, .
		\end{equation}
	\end{lemma}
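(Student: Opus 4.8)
The plan is to reduce the claimed inequality to a component-wise reverse triangle inequality for the underline. First I would unfold both sides using the definitions. By the definition of the $q$-norm on $\mathcal{Y}$, the right-hand side is $\Vert \vec{y}+\vec{z}\Vert_2^2 = \Vert\underline{\vec{y}+\vec{z}}\Vert_2^2$, so both sides are ordinary squared Euclidean norms of vectors in $\mathbb{R}^P$. Writing them out componentwise gives
\[
	\Vert \underline{\vec{y}} - \underline{\vec{z}}\Vert_2^2 = \sum_{i=1}^P \bigl(\underline{y_i}-\underline{z_i}\bigr)^2, \qquad \Vert\underline{\vec{y}+\vec{z}}\Vert_2^2 = \sum_{i=1}^P \bigl(\underline{y_i+z_i}\bigr)^2 \, .
\]
Hence it suffices to establish the term-by-term bound $\bigl(\underline{y_i}-\underline{z_i}\bigr)^2 \le \bigl(\underline{y_i+z_i}\bigr)^2$ for every index $i$, since summing these $P$ inequalities yields the claim.

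Next I would reduce this term-by-term bound to showing $\vert\underline{y_i}-\underline{z_i}\vert \le \underline{y_i+z_i}$, which is equivalent after taking square roots because both sides are non-negative. This is precisely a reverse triangle inequality for the underline, and it follows directly from the triangle property and homogeneity already recorded in Lemma~\ref{lemma:underlinezero} (which, as noted, carry over verbatim to underlined output vectors). Concretely, I would write $y_i = (y_i+z_i)+(-z_i)$ to obtain $\underline{y_i} \le \underline{y_i+z_i} + \underline{-z_i} = \underline{y_i+z_i} + \underline{z_i}$, and symmetrically $\underline{z_i} \le \underline{y_i+z_i} + \underline{y_i}$. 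Rearranging these two gives both $\underline{y_i}-\underline{z_i} \le \underline{y_i+z_i}$ and $\underline{z_i}-\underline{y_i}\le \underline{y_i+z_i}$, whence $\vert\underline{y_i}-\underline{z_i}\vert \le \underline{y_i+z_i}$.

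The only point requiring care — the closest thing to an obstacle in an otherwise routine argument — is the sign bookkeeping: the statement pairs the \emph{difference} $\underline{\vec{y}}-\underline{\vec{z}}$ on the left with the \emph{sum} $\vec{y}+\vec{z}$ on the right. One must therefore apply the reverse triangle inequality to the splitting $y_i=(y_i+z_i)+(-z_i)$, using homogeneity to replace $\underline{-z_i}$ by $\underline{z_i}$, rather than the naive splitting $y_i=(y_i-z_i)+z_i$ which would produce $\underline{y_i-z_i}$ on the right instead. Everything else is an immediate consequence of Lemma~\ref{lemma:underlinezero}, and no properties of the underlying $L^{p'}$ norm beyond those already abstracted into the underline calculus are needed.
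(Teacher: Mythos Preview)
Your proposal is correct and follows essentially the same approach as the paper: reduce to the componentwise reverse triangle inequality $\lvert\underline{y_i}-\underline{z_i}\rvert \le \underline{y_i+z_i}$ and derive it from the splitting $y_i=(y_i+z_i)+(-z_i)$ via the triangle inequality and homogeneity of the underline. The only cosmetic difference is that the paper phrases the two directions as a case distinction on which of $\underline{y_i},\underline{z_i}$ is larger, whereas you obtain both inequalities symmetrically and then combine them.
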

	%
	% Proof
	%	
	\begin{proof}
		The inequality can be written as
		\begin{equation}
			\sum_{i=1}^P\left(\underline{y_i} - \underline{z_i} \right)^2 \leq 
			\sum_{i=1}^{P} \left( \underline{y_i+z_i}  \right)^2 \, .
	\end{equation}	
	To prove the validity of the latter inequality it suffices to show that
	\begin{equation}
		\vert \underline{y_i} - \underline{z_i} \vert \leq 
			\vert \underline{y_i+z_i} \vert \label{eq:yzinequality}
	\end{equation}	
	for each $i$ in order to complete the proof. 
	First, consider the case $\underline{y_i} \geq \underline{z_i}$. From lemma \ref{lemma:underlinezero} we 
	get
	\begin{equation}
		\underline{y_i} = \underline{ (y_i + z_i) - z_i} \leq  \underline{y_i + z_i} + \underline{z_i}
	\end{equation}
	and subtracting $\underline{z_i}$ on both sides yields
	\begin{equation}
		\underline{y_i} - \underline{z_i} \leq \underline{y_i+z_i} \, .
	\end{equation}
	Both sides are positive so \eqref{eq:yzinequality} holds. 
	Second, consider the case 
	$\underline{z_i} \geq \underline{y_i}$ and perform the same steps with $z_i$ and $y_i$ swapped to get
	\begin{equation}
		\underline{z_i} - \underline{y_i} \leq \underline{z_i+y_i} \, . 
	\end{equation}
	For the right hand side it is clear that
	\begin{equation}
		 \underline{z_i+y_i} =  \underline{y_i+z_i}
	\end{equation}
	and for the left hand side
	\begin{equation}
		\underline{z_i} - \underline{y_i} = \vert 	\underline{y_i} - \underline{z_i} \vert
	\end{equation}
	thus equation \eqref{eq:yzinequality} holds also in this case. 
	\end{proof}		

\section{A Note on the Gammoid of a Dynamic Input-Output System}
\subsection*{Input Sets}
	The name invertibility can be traced back to the historical development of the 
	subject. Mathematically, a system is invertible if the input-output map
	\begin{equation}
		\Phi: \mathcal{U} \to \mathcal{Y} \label{eq:PhiNoRestrictions}
	\end{equation}
	is one-to-one.
	
	We have already introduced the restricted problem
	\begin{equation}
		\Phi: \mathcal{U}_S \to \mathcal{Y} \label{eq:PhiRestrictions}
	\end{equation}
	where we only allow input vectors $\vec{u}$ whose support lies in $S\subseteq \{ 1, \ldots, N \}$.

	For the graphical interpretation, we understand $S$ as the \emph{input set}.
	
	For a given input set $S$, one will now see that the operator $\Phi$ might be one-to-one for the 
	restricted problem \eqref{eq:PhiRestrictions} while for the unrestricted problem 
	\eqref{eq:PhiNoRestrictions} it is not. 
	
\subsection*{Influence Graph and Structural Invertibility}
	The definition of the influence graph $g=(\mathcal{N},\mathcal{E})$ of a dynamic system is given in 
	the main text.
	By construction, an input set $S\subseteq \mathcal{N}$ can be interpreted as a set of nodes in the 
	influence graph. Such a node is also called an \emph{input node}.
	
	The observables of an input-output system are given by a map $\vec{c}:\vec{x}\mapsto \vec{y}$,
	\begin{equation}
		\vec{y}(t) = \vec{c}(\vec{x}(t)) \, .
	\end{equation}
	If, for instance, $c_1(\vec{x})= x_k$, then $y_1 = x_k$ and we can interpret
	the node $k\in\mathcal{N}$ as \emph{output node}. 
	Indeed, it is without loss of generality in both, the linear and the non-linear case, that each output 
	component corresponds to one state component.
	Hence, we get an \emph{out set} $Z=\{z_1,\ldots,z_P\}\subseteq \mathcal{N}$ that characterizes the 
	observables of a system from the graph theoretical point of view.
	
\subsubsection*{Sufficiency of Structural Invertibility}

	The structural invertibility of a dynamic input-output system is 
	now completely determined by the triplet $(S,g,Z)$, as the system is structurally invertible if and 
	only if $S$ is linked in $g$ into $Z$.
	It has been shown \cite{wey_rank_1998}, that structural invertibility of the influence graph 
	is necessary for the invertibility of a system.
	
	To the best of our knowledge, the sufficiency of structural invertibility for the invertibility of a 
	system is an open question.
	However, we have found the intermediate result:
	
	Say we have an dynamic system with input-output map $\Phi:\mathcal{U}_S \to \mathcal{Y}$
	where the outputs are characterized by the output set
	\begin{equation}
		Z = (z_1,\ldots,z_P) \, .
	\end{equation}
	Assume this system is invertible. Necessarily, the triplet $(S,g,Z)$ is structurally invertible, 
	where $g=(\mathcal{N},\mathcal{E})$ is the influence graph of the system. 
	Now assume, there is a distinct set
	\begin{equation}
		\tilde{Z} = (\tilde{z}_1,\ldots,\tilde{z}_P)
	\end{equation}
	with either $\tilde{z}_i = z_i$ or $(z_i\to \tilde{z}_i)\in\mathcal{E}$. 
	Note, that the output signals from $Z$ carry enough information to infer the unknown inputs of the 
	system. 		
	As each $\tilde{z}_i$ is either equal or directly influenced by $z_i$, it is plausible, that 
	this information is directly passed from $Z$ to $\tilde{Z}$. Unless, we encounter a pathological 
	situation where information is lost in this step. 
	
	To give an example for such a pathological situation is given for instance in the toy system
	\begin{equation}
		\begin{aligned}
		\dot{x}_1(t) &= x_2(t) \\
		\dot{x}_2(t) &= -x_1(t) \\
		\dot{x}_3(t) &= x_1(t) x_2(t) \\
		\dot{x}_4(t) &= \frac{x_2^2(t) - x_1^2(t)}{ x_3(t) } \, .
		\end{aligned} 
	\end{equation}
	We find the differential equation
	\begin{equation}
		\frac{\text{d}}{\text{d}t} f + x_3 g = 0 
	\end{equation}
	is solved by the right hand sides $f= x_1 x_2$ and $g={x_2^2 - x_1^2}/{ x_3 }$.
	Say the functions $x_1$ and $x_2$ yield \textit{independent information}. The information content from 
	$x_3$ and $x_4$ is not independent any more, but coupled through the differential equation above. 
		
\subsection*{Gammoid Structure}
	Consider the triplet $(S,g,Z)$ with graph $g=(\mathcal{N},\mathcal{E})$ and input and output sets 
	$S,Z\subseteq \mathcal{N}$. 
	The size of $S$ and $Z$ be $\text{card}\, S = M$ and $\text{card}\, Z = P$. 
	For a path 
	\begin{equation} 
	\pi=(n_0\to\ldots \to n_k) 
	\end{equation} 
	with $n_i\in\mathcal{N}$ and $(n_i \to n_{i+1})\in\mathcal{E}$ we call $\text{ini}\, (\pi)=n_0$ 
	the initial node and $\text{ter}\, (\pi)=n_k$ the terminal node. 
	For a set $\Pi$ of paths $\text{ini}\,\Pi$ and $\text{ter}\, \Pi$ yield the sets of initial and 
	terminal nodes.
	
\begin{definition}
	We say \emph{$S$ is linked in 
	$g$ to $Z$}, if there is a family $\Pi=(\pi_1,\ldots,\pi_M)$ of pairwise node-disjoint paths with 
	$\text{ini}(\Pi) = S$ and $\text{ter}\, (\Pi)=Z$. 
	
	The set \emph{$S$ is linked in $g$ into $Z$}, if 
	$\text{ini}\, (\Pi)=S$ and $\text{ter}\, (\Pi)\subseteq Z$. 	
\end{definition}

	The gammoid of a dynamic system assembles all triples $(S,g,Z)$ into one structure. 

	First, consider the union
	of all allowed input nodes $\mathcal{L}$. This set is called the 
	\emph{input ground set} and the input sets are understood as the subsets $S\subseteq \mathcal{L}$.	
	Without any prior knowledge,
	$\mathcal{L}=\mathcal{N}$ is often appropriate.  
	
	Second, let $\mathcal{M}$ be the \emph{output ground set}, i.e., the set of all allowed output nodes. 
	The output set $Z\subseteq 
	\mathcal{M}$ is understood. Usually, the observables of a dynamic system are given by
	\begin{equation}
		\vec{y}(t) = \vec{c}(\vec{x}(t))
	\end{equation}
	so the function $\vec{c}$ already determines the output set $Z$. 
	In the main text we therefore focus on the case $Z = \mathcal{M}$.
	
	However, if we search for an optimal output set 
	it makes sense to first define the output nodes $\mathcal{M}$ that are principally allowed, and 
	then, e.g., minimize $Z\subseteq \mathcal{M}$ under the restriction that the structural 
	invertibility condition is fulfilled.
	
\begin{definition}
	We call $\Gamma:=(\mathcal{L},g,\mathcal{M})$ a \emph{gammoid}.
\end{definition}

	Gammoids emerge from the graph theoretical problem of node-disjoint path and have been studied earlier
	without connection to dynamic systems. In \cite{perfect_applications_1968} a
	connection was made between gammoids and independence structures. The probably first usage of the 
	word gammoid stems from 
	\cite{pym_linking_1969}. The topic has mainly been developed in
	\cite{pym_proof_1969, perfect_independence_1969, ingleton_gammoids_1973, mason_class_2018} and also the 
	connection to matroid theory developed in \cite{whitney_abstract_1935} has been discovered. 
\begin{definition}
	We say $S\subseteq \mathcal{L}$ is \emph{independent} in $\Gamma=(\mathcal{L},g,\mathcal{M})$
	if $S$ is linked into $\mathcal{M}$.
\end{definition}	
	
	The rank-nullity relation for the gammoid $\Gamma=(\mathcal{L},g,\mathcal{M})$ follows from the general 
	investigation of matroids in \cite{whitney_abstract_1935} and states, 
	that for any $S\in\mathcal{L}$
	\begin{equation}
		\text{rank}\, S + \text{null}\, S = \text{card}\, S \, ,
	\end{equation}
	where $\text{rank}\, S$ is the size of the largest independent subset $T\subseteq S$.
	In the main text, the spark of $\Gamma$ was defined as the largest integer such that
	\begin{equation}
		\text{card}\, S < \text{spark}\, \Gamma \Rightarrow \text{null}\, S = 0 \, .
	\end{equation}

	The following theorem is known for the spark of a matrix \cite{donoho_optimally_2003}. Utilizing gammoid 
	theory, we are able to proof its validity for non-linear dynamic systems.
	%
	% Theorem
	%
	\begin{theorem}
		Consider a dynamic input-output system with input-output map $\Phi:\mathcal{U}\to\mathcal{Y}$ and 
		gammoid $\Gamma$.
		
		Let $\vec{y}\in\mathcal{Y}$ be given. If an input $\vec{u}\in\mathcal{U}$ solves
		\begin{equation}
			\Phi(\vec{u}) = \vec{y}
		\end{equation}
		and 
		\begin{equation}
		\Vert \vec{u} \Vert_0 < \frac{\text{spark}\, \Gamma}{2} \, ,
		\end{equation}
		 then for any other solution 
		$\vec{v}\in\mathcal{U}$ we find $\Vert \vec{v} \Vert_0 > \Vert \vec{u} \Vert_0$.
	\end{theorem}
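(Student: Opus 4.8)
The plan is to argue by contradiction, reducing the nonlinear claim to the invertibility of the restricted input--output map on a suitably small input set. This avoids forming a difference $\vec{u}-\vec{v}$ and testing membership in a kernel, which is the standard matrix argument but is unavailable when $\Phi$ is nonlinear. Suppose $\vec{u}$ solves $\Phi(\vec{u})=\vec{y}$ with $\Vert\vec{u}\Vert_0<\tfrac{1}{2}\,\text{spark}\,\Gamma$, and suppose, contrary to the assertion, that there is a \emph{different} solution $\vec{v}$ with $\Vert\vec{v}\Vert_0\le\Vert\vec{u}\Vert_0$.

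First I would collect the two supports into a single input set $S:=\text{supp}\,\vec{u}\cup\text{supp}\,\vec{v}$ and bound its cardinality. Since $\Vert\cdot\Vert_0=\text{card}\,\text{supp}$ and the cardinality of a union is at most the sum of cardinalities, $\text{card}\,S\le\Vert\vec{u}\Vert_0+\Vert\vec{v}\Vert_0\le 2\Vert\vec{u}\Vert_0<\text{spark}\,\Gamma$. By the definition of the spark (Definition~\ref{def:spark}), the inequality $\text{card}\,S<\text{spark}\,\Gamma$ forces $\text{null}\,S=0$, and the rank--nullity relation (Definition~\ref{def:rank}) then gives $\text{rank}\,S=\text{card}\,S$; that is, $S$ is itself independent, hence by Definition~\ref{def:Gammoid} the set $S$ is linked in $g$ into $Z$.

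Next I would exploit this linkage to invoke invertibility of the restricted system. Both $\vec{u}$ and $\vec{v}$ have support contained in $S$, so $\vec{u},\vec{v}\in\mathcal{U}_S$, and both solve the data problem $\Phi(\vec{w})=\vec{y}$. Because $S$ is linked into $Z$, the restricted map $\Phi_S$ is invertible in the sense of Definition~\ref{def:invertible}, so the two solutions must agree almost everywhere, $\vec{u}=\vec{v}$ a.e.\ in $[0,T]$. This contradicts the hypothesis that $\vec{v}$ is a different solution. Hence no solution other than $\vec{u}$ with $\Vert\vec{v}\Vert_0\le\Vert\vec{u}\Vert_0$ can exist, and every other solution must satisfy $\Vert\vec{v}\Vert_0>\Vert\vec{u}\Vert_0$, which is the claim.

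The only genuinely delicate step is the passage from the combinatorial fact ``$S$ is linked into $Z$'' to the analytic fact ``$\Phi_S$ has at most one preimage,'' because for nonlinear $\Phi$ I cannot substitute the linear-algebra observation that fewer than $\text{spark}$ columns are linearly independent. This is precisely the sufficiency direction of Theorem~\ref{theorem:structuralinvertibility}: linkage guarantees structural invertibility outright, while its upgrade to genuine invertibility of $\Phi_S$ is immediate for linear systems and holds generically otherwise, failing only in the pathological situations discussed in the Appendix. I expect this to be the main obstacle, and I would state the theorem under the standing convention that structural invertibility of the linked set $S$ yields invertibility of $\Phi_S$, so that Definition~\ref{def:invertible} applies.
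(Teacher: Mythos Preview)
Your proof is correct and follows essentially the same approach as the paper: assume a second solution of no larger support, combine the two supports into a single set $Q$ of cardinality strictly below $\text{spark}\,\Gamma$, and use invertibility of $\Phi$ restricted to $\mathcal{U}_Q$ to force the two solutions to coincide. You are in fact more explicit than the paper about the passage from $\text{null}\,S=0$ through independence and linkage to invertibility of $\Phi_S$, and you correctly flag the sufficiency direction of Theorem~\ref{theorem:structuralinvertibility} as the one genuinely delicate point---the paper's proof simply asserts ``by definition of the spark $\Phi:\mathcal{U}_Q\to\mathcal{Y}$ is invertible'' without spelling out this step.
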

	%
	% Proof
	%
	\begin{proof}
		We first reformulate the theorem as follows: There is at most one solution with ``$0$-norm'' 
		smaller than $\frac{\text{spark}\, \Gamma}{2}$. 
		
		So assume there are two distinct solutions 
		$\vec{u} \neq\vec{v}$ that both have ``$0$-norm'' smaller than $\frac{\text{spark}\, \Gamma}{2}$.
		If we denote $S:=\text{supp}\, \vec{u}$ and 
		$T:=\text{supp}\, \vec{v}$ the assumption says 
		\begin{equation}		
		\text{card}\, (S)<\frac{
		\text{spark}\, \Gamma}{2}
		\end{equation}		
		 and 
		 \begin{equation}
		 \text{card}\, (T)<\frac{ \text{spark}\, \Gamma}{2} \, .
		 \end{equation}		
		The union $Q:=S\cup T$ has $\text{card}\, Q < \text{spark}\, \Gamma$ thus by definition of the spark
		\begin{equation}
			\Phi: \mathcal{U}_Q \to \mathcal{Y}
		\end{equation}
		is invertible. So if there is a $\vec{w}\in\mathcal{U}_Q$ that solves
		\begin{equation}
			\Phi(\vec{w}) = \vec{y} \, ,
		\end{equation}		
		this $\vec{w}$ is unique with respect to $\mathcal{U}_Q$. 
		By construction $\mathcal{U}_S\subseteq \mathcal{U}_Q$, thus $\vec{u}$ is a solution that lies in 
		$\mathcal{U}_Q$. So we know that 
		$\vec{w}$ exists and is necessarily 
		equal to $\vec{u}$. But also $\mathcal{U}_T\subseteq \mathcal{U}_Q$ so $\vec{w}$ must also equal 
		$\vec{v}$. We found $\vec{u}=\vec{w}=\vec{v}$ which contradicts the assumption. 
	\end{proof}

\section{Convex Optimization}
	We provide the proof of theorem 9 from the main text.
	\begin{proof}
		We first show, that the constraint set
		\begin{equation}
			\mathcal{A} := \{ \vec{u}\in\mathcal{U} \, | \, \Vert \Phi(\vec{u})-\vec{y} \Vert_2 \leq
			 \epsilon \}
		\end{equation}
		with $\vec{y}\in \mathcal{Y}$ and $\epsilon > 0$ is convex. Let $\alpha,\beta > 0$ such that 
		$\alpha + \beta =1$. We have to show that 
		\begin{equation}
			\vec{u}, \vec{v} \in \mathcal{A} \, \Rightarrow \, (\alpha \vec{u} + 
			\beta \vec{v}) \in \mathcal{A} \, .
		\end{equation}
		Given that $\Phi$ is linear we can estimate
		\begin{equation}
			\Vert \Phi(\alpha \vec{u}+\beta \vec{u}) - \vec{y} \Vert_2 = 
			\Vert \alpha \Phi(\vec{u}) - \alpha \vec{y} + \beta \Phi(\vec{v}) -  \beta \vec{y} \Vert_2
		\end{equation}
		and with the triangle inequality
		\begin{equation}
			\Vert \Phi(\alpha \vec{u}+\beta \vec{u}) - \vec{y} \Vert_2  \leq 
			\alpha \Vert \Phi(\vec{u})- \vec{y} \Vert_2 + \beta \Vert \Phi(\vec{v})- \vec{y} \Vert_2 \leq
			\alpha \epsilon + \beta \epsilon = \epsilon \, .
		\end{equation}
		Since $\Vert \cdot \Vert_q$ is a proper norm on $\mathcal{U}$ we again apply the triangle 
		inequality
		\begin{equation}
			\Vert \alpha \vec{u} + \beta \vec{v} \Vert_1 \leq \alpha \Vert \vec{u} \Vert_1 + \beta \Vert \vec{v}
			\Vert_1 
		\end{equation}
		to see that the function we want to minimize is convex.
	\end{proof}
\section{Restricted Isometry Property}
	The Restricted Isometry Property first appeared in \cite{candes_decoding_2005} and was also 
	considered under 
	the name $\epsilon$-isometry in \cite{donoho_compressed_2006}. In the following we will consider 
	linear dynamic system. 
	%
	% Definition
	%
	\begin{definition}
		The Restricted Isometry Property of order $2k$ (RIP$2k$) is fulfilled if there is a constant 
		$\delta_{2k}$ 
		such that for $\vec{u},\vec{v}\in \Sigma_{k}$ the inequalities 
		\begin{equation}
			(1-\delta_{2k}) \Vert \underline{\vec{u}}-\underline{\vec{v}} \Vert_2^2 \leq 
			\Vert \underline{\Phi(\vec{u})} - \underline{\Phi(\vec{v})} \Vert_2^2 
		\end{equation}
		and
		\begin{equation}
			\Vert \underline{\Phi(\vec{u})} + \underline{\Phi(\vec{v})} \Vert_2^2 
			\leq 
			(1+\delta_{2k}) \Vert \underline{\vec{u}}+\underline{\vec{v}} \Vert_2^2 
		\end{equation}
		hold.
	\end{definition}
	The following lemma was proven for matrices in \cite{yonina_c_eldar_compressed_nodate} and can now 
	be generalized to linear dynamic systems.
	%
	% Lemma
	%
	\begin{lemma}\label{lemma:delta2kinequality}
		Assume RIP$2k$ and let $\vec{u},\vec{v}$ with disjoint support. Then
		\begin{equation}
		\langle \underline{\Phi(\vec{u})  } , \underline{\Phi(\vec{v}) } \rangle \leq \delta_{2k} \Vert 
		\vec{u} \Vert_2 \Vert \vec{v} \Vert_2 \, . \label{eq:delta2kinequality}
		\end{equation}
	\end{lemma}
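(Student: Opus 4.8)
The plan is to mimic the classical near-orthogonality lemma for the RIP, but to carry the argument out entirely on the underlined images $\underline{\Phi(\vec{u})},\underline{\Phi(\vec{v})}\in\mathbb{R}^P$, since these are genuine non-negative real vectors on which the ordinary polarization identity is available. The one point requiring care is that the underline is \emph{nonlinear}, so I may not write $\underline{\Phi(\vec{u}+\vec{v})}=\underline{\Phi(\vec{u})}+\underline{\Phi(\vec{v})}$. Fortunately the RIP in Definition~\ref{def:RIP} is stated precisely in terms of $\underline{\Phi(\vec{u})}\pm\underline{\Phi(\vec{v})}$, which is exactly what polarization in $\mathbb{R}^P$ produces, so the two ingredients fit together without ever linearizing the underline.

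First I would reduce to the case $\Vert\vec{u}\Vert_2=\Vert\vec{v}\Vert_2=1$ (the cases $\vec{u}=\vec{0}$ or $\vec{v}=\vec{0}$ being trivial since both sides vanish) by a homogeneity argument. Since $\Phi$ is linear, $\Phi(\lambda\vec{u})=\lambda\Phi(\vec{u})$, and by homogeneity of the underline (Lemma~\ref{lemma:underlinezero}, part 3) one has $\underline{\Phi(\lambda\vec{u})}=\vert\lambda\vert\,\underline{\Phi(\vec{u})}$ componentwise. Hence $\langle \underline{\Phi(\lambda\vec{u})},\underline{\Phi(\mu\vec{v})}\rangle=\vert\lambda\vert\vert\mu\vert\,\langle \underline{\Phi(\vec{u})},\underline{\Phi(\vec{v})}\rangle$, while $\Vert\lambda\vec{u}\Vert_2\Vert\mu\vec{v}\Vert_2=\vert\lambda\vert\vert\mu\vert\,\Vert\vec{u}\Vert_2\Vert\vec{v}\Vert_2$ by homogeneity of the $q$-norm (Proposition~\ref{prop:properNorm}). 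Both sides of the claimed inequality scale by the same factor, and normalizing changes neither the supports nor their disjointness, so the rescaled vectors still lie in $\Sigma_k$; it therefore suffices to treat unit vectors.

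Next I would apply the polarization identity in $\mathbb{R}^P$,
\begin{equation}
\langle \underline{\Phi(\vec{u})},\underline{\Phi(\vec{v})}\rangle = \frac{1}{4}\left( \Vert \underline{\Phi(\vec{u})}+\underline{\Phi(\vec{v})}\Vert_2^2 - \Vert \underline{\Phi(\vec{u})}-\underline{\Phi(\vec{v})}\Vert_2^2 \right),
\end{equation}
and bound the two terms by the RIP$2k$ inequalities, using the upper bound on the first and the lower bound on the second (both valid because $\vec{u},\vec{v}\in\Sigma_k$):
\begin{equation}
\langle \underline{\Phi(\vec{u})},\underline{\Phi(\vec{v})}\rangle \le \frac{1}{4}\left( (1+\delta_{2k})\Vert \underline{\vec{u}}+\underline{\vec{v}}\Vert_2^2 - (1-\delta_{2k})\Vert \underline{\vec{u}}-\underline{\vec{v}}\Vert_2^2 \right).
\end{equation}
Because $\vec{u},\vec{v}$ have disjoint support, Lemma~\ref{lemma:disjointSupport} gives $\Vert \underline{\vec{u}}+\underline{\vec{v}}\Vert_2=\Vert \underline{\vec{u}}-\underline{\vec{v}}\Vert_2=\Vert \vec{u}+\vec{v}\Vert_2$, and Proposition~\ref{prop:disjointSupport} identifies $\Vert\vec{u}+\vec{v}\Vert_2^2=\Vert\vec{u}\Vert_2^2+\Vert\vec{v}\Vert_2^2=2$ for unit vectors. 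The two terms in the parenthesis then share the common factor $2$, the constant parts cancel, and what remains is $\frac{1}{4}\bigl(2(1+\delta_{2k})-2(1-\delta_{2k})\bigr)=\delta_{2k}$. Since $\Vert\vec{u}\Vert_2\Vert\vec{v}\Vert_2=1$, this is exactly the claim for unit vectors, and undoing the normalization finishes the proof. I would also note that the inner product is automatically non-negative, since both underlined vectors have non-negative entries, so only the upper bound is needed and no separate absolute-value argument arises.

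The main obstacle is conceptual rather than computational: preventing the nonlinearity of the underline from contaminating the argument. The whole point is that polarization and the RIP are applied to the \emph{underlined} images in $\mathbb{R}^P$, and the essential cancellation --- that $\Vert \underline{\vec{u}}+\underline{\vec{v}}\Vert_2$ equals $\Vert \underline{\vec{u}}-\underline{\vec{v}}\Vert_2$ --- rests entirely on the disjoint-support identities of Lemma~\ref{lemma:disjointSupport} and Proposition~\ref{prop:disjointSupport}. Checking that normalization respects $\Sigma_k$ and that $\langle \underline{\Phi(\vec{u})},\underline{\Phi(\vec{v})}\rangle$ carries the correct order-one homogeneity is the only other place where the composite structure of the norm must be handled carefully; everything else is the standard identity chain.
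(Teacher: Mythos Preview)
Your proof is correct and follows essentially the same route as the paper's own argument: normalize to unit vectors via homogeneity of the underline and linearity of $\Phi$, apply the polarization identity in $\mathbb{R}^P$ to $\underline{\Phi(\vec{u})}$ and $\underline{\Phi(\vec{v})}$, invoke the two RIP$2k$ inequalities, and collapse the result using Lemma~\ref{lemma:disjointSupport} and Proposition~\ref{prop:disjointSupport}. Your write-up is in fact slightly cleaner than the paper's, which contains a couple of typographical slips in the intermediate constants.
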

	%
	% Proof
	%
	\begin{proof}
		First we divide \eqref{eq:delta2kinequality} by $\Vert \vec{u}\Vert_2\Vert \vec{v}
		\Vert_2$ to get a normalized version of \eqref{eq:delta2kinequality}
		\begin{equation}
			\frac{ \langle \underline{\Phi(\vec{u})} , \underline{\Phi(\vec{v})} \rangle  }{ \Vert \vec{u}
			 \Vert_2 \Vert \vec{v}\Vert_2} =\left\langle \underline{
			 \Phi\left( \frac{\vec{u}}{ \Vert \vec{u}  \Vert_2} \right)}, \underline{
			 \Phi\left( \frac{\vec{v}}{ \Vert \vec{v}  \Vert_2} \right)		}	 
			 \right\rangle \leq \delta_{2k}
		\end{equation}
		where we used the homogeneity of $\underline{\,\cdot \,}$ and the linearity of $\Phi$. Note, that 
		underlined vectors are simply vectors in $\mathbb{R}^P$, hence the scalar product is the standard 
		scalar product. For simplicity 
		of notation we henceforth assume \\
		$\Vert \vec{u} \Vert_2 = \Vert \vec{v} \Vert_2 =1$. We apply the parallelogram identity to the 
		left hand 
		side of \eqref{eq:delta2kinequality} to get
		\begin{equation}
			\langle \underline{\Phi(\vec{u})} , \underline{\Phi(\vec{v})} \rangle = \frac{1}{4} \left(
			\Vert \underline{\Phi(\vec{u}) } + \underline{\Phi(\vec{v})} \Vert_2^2 - \Vert \underline{\Phi(
			\vec{u})} - \underline{\Phi(\vec{v})} \Vert_2^2
			\right)
		\end{equation}
		and since RIP$2k$ holds we get
		\begin{equation}
			\langle \underline{\Phi(\vec{u})},\underline{\Phi(\vec{v})} \rangle = \frac{1}{4} \left(
			(1+\delta_{2k}) \Vert \underline{\vec{u}} + \underline{\vec{v}} \Vert_2^2 - (1-\delta_{2k})
			\Vert \underline{\vec{u}} - \underline{\vec{v}} \Vert_2^2
			\right) \, .
		\end{equation}
		We apply lemma \ref{lemma:disjointSupport} to see that due to the disjoint support we get
		\begin{equation}
			\Vert \underline{\vec{u}} + \underline{\vec{v}} \Vert_2^2 =
			\Vert \underline{\vec{u}} - \underline{\vec{v}} \Vert_2^2 = 
			\Vert \vec{u} + \vec{v} \Vert_2^2
		\end{equation}
		and proposition \ref{prop:disjointSupport} yields
		\begin{equation}
			\langle \underline{\Phi(\vec{u})} ,\underline{ \Phi(\vec{v}) }\rangle \leq 2\delta_{2k} 
			\left(\Vert \vec{u} \Vert_2^2 +
			\Vert\vec{v}\Vert_2^2 \right) = \delta_{2k} \, .
		\end{equation}
	\end{proof}
	To see that our framework is in agreement with the results for the static problem consider the following:
	Let $A\in\mathbb{R}^{P \times N}$ and $\vec{y}\in\mathbb{R}^P$ be given and $\vec{w}\in\mathbb{R}^N$. 
	Solve
	\begin{equation}
		A\vec{w} = \vec{y}
	\end{equation}
	for $\vec{w}$. This problem can be seen as a dynamic system with trivial time-development and 
	input-output map $A$. 
	Consequently an input set $S=\{s_1,s_2,\ldots \}$ is independent in the gammoid 
	if 
	and only if the columns $\{ \vec{a}_{s_1},\vec{a}_{s_2},\ldots \}$ of $A$ are linearly independent. 
	In this 
	special case we can drop the underline notation and we can rewrite the RIP$2k$ condition as follows. Let 
	$\kappa:=2k$. For each $\vec{u},\vec{v}\in\Sigma_k$
	\begin{equation}
		(1-\delta_{2k}) \Vert \vec{u}-\vec{v} \Vert_2^2 \leq \Vert A\vec{u} - A\vec{v} \Vert_2^2
	\end{equation}
	is equivalent to saying that for each $\vec{w}:=\vec{u}-\vec{v}$ in $\Sigma_{2k}=\Sigma_\kappa$ 
	we have
	\begin{equation}
		(1-\delta_\kappa) \Vert \vec{w} \Vert_2^2 \leq \Vert A\vec{w} \Vert_2^2 \, .
	\end{equation}
	The same argumentation holds for the second inequality, so that we can say, the system has the RIP 
	of order $\kappa$ if and only if for all $\vec{w}\in\Sigma_\kappa$
	\begin{equation}
	(1-\delta_\kappa) \Vert \vec{w} \Vert_2^2 \leq \Vert A\vec{w} \Vert_2^2 
	\leq (1+\delta_\kappa) \Vert \vec{w} \Vert_2^2  \, .
	\end{equation}
	The latter is exactly the RIP condition that one usually formulates for static compressed sensing. We 
	can therefore say, that our framework contains the classical static problem as special case. 
	
	We now turn 
	our interest to one important proposition about systems that fulfil the RIP$2k$. This one corresponds to 
	Lemma 1.3 from \cite{yonina_c_eldar_compressed_nodate}. We follow the idea of the proof given there, 
	however, some additional steps are necessary in order to get a result valid for dynamic systems.
	%
	% Proposition
	%
	\begin{proposition} \label{prop:alphabeta}
		Assume the RIP$2k$ holds and let $\vec{u}\in\mathcal{U}$ and $k\in \mathbb{N}$. Let 
		$\Lambda_0$ be an index set of size $\text{card}\, \Lambda_0 \leq k$. Let $\Lambda_1$ correspond 
		to the $k$ largest entries of $\underline{\vec{u}_{\Lambda_0^c}}$ (see lemma \ref{lemma:LambdaSums}), 
		$\Lambda_2$ to the second largest and so forth. We set $\Lambda:=\Lambda_0 \cup \Lambda_1$ and
		\begin{equation}
			\alpha := \frac{\sqrt{2}\delta_{2k}}{1-\delta_{2k}} \quad , \quad 
			\beta := \frac{1}{1-\delta_{2k}} \, .
		\end{equation}
		Then
		\begin{equation}
			\Vert \vec{u}_\Lambda \Vert_2 \leq \alpha \frac{\Vert \vec{u}_{\Lambda_0^c} \Vert_1}{\sqrt{k}}
			+\beta \frac{ \left\langle \underline{\Phi(\vec{u}_\Lambda)},
			\underline{\Phi(\vec{u})}\right\rangle }{
			\Vert \vec{u}_\Lambda \Vert_2}
			\, .
		\end{equation}
	\end{proposition}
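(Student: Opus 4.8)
The plan is to follow the classical compressed-sensing argument (the analogue of Lemma~1.3 in \cite{yonina_c_eldar_compressed_nodate}), but to replace every step that tacitly uses linearity of the coordinates by a \emph{component-wise} estimate on underlined vectors. The whole difficulty is that the underline $\underline{\,\cdot\,}$ is not linear, so the static identity $\underline{\Phi(\vec{u}_\Lambda)}=\underline{\Phi(\vec{u})}-\underline{\Phi(\vec{u}_{\Lambda^c})}$ is false here and must be turned into inequalities pointing the right way.

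First I would upgrade the lower RIP bound to all of $\Sigma_{2k}$. Given $\vec{w}\in\Sigma_{2k}$, split its support into two sets of size at most $k$ and write $\vec{w}=\vec{w}_A+\vec{w}_B$ with disjoint supports. Applying the first RIP inequality to $\vec{u}=\vec{w}_A$, $\vec{v}=-\vec{w}_B$, the homogeneity of the underline (Lemma~\ref{lemma:underlinezero}) gives $\underline{\Phi(-\vec{w}_B)}=\underline{\Phi(\vec{w}_B)}$, and the disjoint-support identity (Lemma~\ref{lemma:disjointSupport}) turns the left-hand side into $(1-\delta_{2k})\Vert\vec{w}\Vert_2^2$. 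Then Lemma~\ref{lemma:2norminequality} with $\vec{y}=\Phi(\vec{w}_A)$, $\vec{z}=\Phi(\vec{w}_B)$ bounds the right-hand side $\Vert\underline{\Phi(\vec{w}_A)}-\underline{\Phi(\vec{w}_B)}\Vert_2^2$ by $\Vert\Phi(\vec{w})\Vert_2^2=\Vert\underline{\Phi(\vec{w})}\Vert_2^2$. This yields $(1-\delta_{2k})\Vert\vec{w}\Vert_2^2\le\Vert\underline{\Phi(\vec{w})}\Vert_2^2$, which I apply to $\vec{w}=\vec{u}_\Lambda\in\Sigma_{2k}$ to start the main estimate at $(1-\delta_{2k})\Vert\vec{u}_\Lambda\Vert_2^2\le\langle\underline{\Phi(\vec{u}_\Lambda)},\underline{\Phi(\vec{u}_\Lambda)}\rangle$.

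For the decomposition step, since $\Phi(\vec{u})=\Phi(\vec{u}_\Lambda)+\Phi(\vec{u}_{\Lambda^c})$ the $L^{p'}$ reverse triangle inequality gives, component-wise, $\underline{\Phi(\vec{u})}_i\ge\underline{\Phi(\vec{u}_\Lambda)}_i-\underline{\Phi(\vec{u}_{\Lambda^c})}_i$. Multiplying by the non-negative number $\underline{\Phi(\vec{u}_\Lambda)}_i$ and summing over the $P$ outputs produces the key inequality $\Vert\underline{\Phi(\vec{u}_\Lambda)}\Vert_2^2\le\langle\underline{\Phi(\vec{u}_\Lambda)},\underline{\Phi(\vec{u})}\rangle+\langle\underline{\Phi(\vec{u}_\Lambda)},\underline{\Phi(\vec{u}_{\Lambda^c})}\rangle$, which plays the role of the static algebraic splitting. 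For the cross term I use two further component-wise triangle inequalities between non-negative vectors: $\underline{\Phi(\vec{u}_{\Lambda^c})}\le\sum_{j\ge2}\underline{\Phi(\vec{u}_{\Lambda_j})}$ (since $\vec{u}_{\Lambda^c}=\sum_{j\ge2}\vec{u}_{\Lambda_j}$) and $\underline{\Phi(\vec{u}_\Lambda)}\le\underline{\Phi(\vec{u}_{\Lambda_0})}+\underline{\Phi(\vec{u}_{\Lambda_1})}$. Because $\Lambda_a$ and $\Lambda_j$ are disjoint for $a\in\{0,1\}$ and $j\ge2$, Lemma~\ref{lemma:delta2kinequality} then bounds each resulting inner product $\langle\underline{\Phi(\vec{u}_{\Lambda_a})},\underline{\Phi(\vec{u}_{\Lambda_j})}\rangle$ by $\delta_{2k}\Vert\vec{u}_{\Lambda_a}\Vert_2\Vert\vec{u}_{\Lambda_j}\Vert_2$.

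Finally I assemble the pieces. Summing gives $\langle\underline{\Phi(\vec{u}_\Lambda)},\underline{\Phi(\vec{u}_{\Lambda^c})}\rangle\le\delta_{2k}\bigl(\Vert\vec{u}_{\Lambda_0}\Vert_2+\Vert\vec{u}_{\Lambda_1}\Vert_2\bigr)\sum_{j\ge2}\Vert\vec{u}_{\Lambda_j}\Vert_2$, and I bound the two factors by Lemma~\ref{lemma:sqrt2} (giving $\le\sqrt{2}\Vert\vec{u}_\Lambda\Vert_2$) and Lemma~\ref{lemma:LambdaSums} (giving $\le\Vert\vec{u}_{\Lambda_0^c}\Vert_1/\sqrt{k}$). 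Combining with the lower RIP bound yields $(1-\delta_{2k})\Vert\vec{u}_\Lambda\Vert_2^2\le\langle\underline{\Phi(\vec{u}_\Lambda)},\underline{\Phi(\vec{u})}\rangle+\sqrt{2}\,\delta_{2k}\Vert\vec{u}_\Lambda\Vert_2\,\Vert\vec{u}_{\Lambda_0^c}\Vert_1/\sqrt{k}$, and dividing through by $(1-\delta_{2k})\Vert\vec{u}_\Lambda\Vert_2$ (the case $\vec{u}_\Lambda=\vec{0}$ being trivial) gives the claim with $\alpha$ and $\beta$ exactly as defined. I expect the main obstacle to be the very first and third steps: getting the $\Sigma_{2k}$ lower bound and the splitting inequality \emph{without} linearity of the underline, which is resolved by orienting every component-wise triangle inequality correctly and leaning on the non-negativity of underlined vectors together with Lemma~\ref{lemma:2norminequality}.
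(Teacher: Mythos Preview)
Your proposal is correct and follows essentially the same route as the paper's proof: you obtain the lower RIP bound on $\vec{u}_\Lambda$ by splitting it into $\vec{u}_{\Lambda_0}$ and $\vec{u}_{\Lambda_1}$ and invoking Lemma~\ref{lemma:disjointSupport} and Lemma~\ref{lemma:2norminequality}, then derive the splitting inequality $\Vert\underline{\Phi(\vec{u}_\Lambda)}\Vert_2^2\le\langle\underline{\Phi(\vec{u}_\Lambda)},\underline{\Phi(\vec{u})}\rangle+\langle\underline{\Phi(\vec{u}_\Lambda)},\sum_{j\ge2}\underline{\Phi(\vec{u}_{\Lambda_j})}\rangle$ via component-wise triangle inequalities, and finish with Lemmas~\ref{lemma:delta2kinequality}, \ref{lemma:sqrt2} and \ref{lemma:LambdaSums}. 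The only cosmetic difference is that you phrase the first step as an abstract ``upgrade'' of the lower RIP bound to all of $\Sigma_{2k}$ and use the reverse triangle inequality for the splitting, whereas the paper applies RIP directly to the pair $(\vec{u}_{\Lambda_0},\vec{u}_{\Lambda_1})$ and uses the forward triangle inequality on $\Phi(\vec{u}_\Lambda)=\Phi(\vec{u})-\Phi(\vec{u}_{\Lambda^c})$; both give the identical inequality.
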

	%
	% Proof
	%
	\begin{proof}
%	We will derive the main theorem about solutions of the \MK{LASSO-type} optimization problem a corollary of the 
%	latter proposition. 

		For two complementary sets $\Lambda$ and $\Lambda^c$ we have 
		$\Lambda\cap \Lambda^c = \emptyset$ so that we can use equation
		\eqref{eq:LambdaDisjoint}. Furthermore, $\Lambda \cup \Lambda^c = \{1,\ldots,N\}$ shows that 
		$\vec{u}=\vec{u}_\Lambda+\vec{u}_{\Lambda^c}$. Thus due to linearity of $\Phi$ we get
		\begin{equation}
			\Phi(\vec{u}_\Lambda) = \Phi(\vec{u})-\Phi(\vec{u}_{\Lambda^c}) \, .
		\end{equation}
		By construction $\Lambda^c = \Lambda_2 \dot{\cup} \Lambda_3 \dot{\cup} \ldots$ so we can 
		substitute the latter term by a sum
		\begin{equation} \label{eq:T2}
			\Phi(\vec{u}_\Lambda) = \Phi(\vec{u})- \sum_{j\geq 2} \Phi(\vec{u}_{\Lambda_j}) \, .
		\end{equation}				
		By construction we also know that $\vec{u}_{\Lambda_0},\vec{u}_{\Lambda_1}\in\Sigma_k$, so from the 
		RIP$2k$ we get
		\begin{equation}
			(1-\delta_{2k}) \Vert \underline{\vec{u}_{\Lambda_0}} - \underline{\vec{u}_{\Lambda_1}} \Vert_2^2 
			\leq \Vert \underline{\Phi(\vec{u}_{\Lambda_0})}-\underline{\Phi(\vec{u}_{\Lambda_1})}   
			\Vert_2^2
		\end{equation}
		On the left hand side we notice that $\Lambda_0$ and $\Lambda_1$ are disjoint so we use 
		lemma \ref{lemma:disjointSupport} to see that
		\begin{equation}
			\Vert \underline{\vec{u}_{\Lambda_0}} - \underline{\vec{u}_{\Lambda_1}} \Vert_2^2 =
			\Vert \vec{u}_{\Lambda_0} + \vec{u}_{\Lambda_1} \Vert_2^2 = \Vert \vec{u}_\Lambda \Vert_2^2 \, .
		\end{equation}
		On the right hand side we use lemma \ref{lemma:2norminequality} and the linearity of $\Phi$ to get
		\begin{equation}
			\Vert \underline{\Phi(\vec{u}_{\Lambda_0})} - \underline{\Phi(\vec{u}_{\Lambda_1})}
			\Vert_2^2 \leq \Vert \Phi(\vec{u}_{\Lambda_0}) + \Phi (\vec{u}_{\Lambda_1}) \Vert_2^2 =
			\Vert \Phi(\vec{u}_{\Lambda}) \Vert_2^2 \, .
		\end{equation}
		We can estimate $\Vert \Phi(\vec{u}_\Lambda)\Vert_2^2$ by
		\begin{equation} \label{eq:T1}
			\Vert \Phi(\vec{u}_\Lambda)\Vert_2^2 \leq 
			\left\langle \underline{\Phi(\vec{u}_\Lambda)} , \underline{ \Phi(\vec{u})}
			\right\rangle 
			+\left\langle \underline{\Phi(\vec{u}_\Lambda)} ,  \sum_{j \geq 2} \underline{ 
			 \Phi(\vec{u}_{\Lambda_j}) }
			\right\rangle \, .
		\end{equation}
		To see that, we first wrote the norm as a scalar product
		\begin{equation}
			\Vert \Phi(\vec{u}_{\Lambda}) \Vert_2^2 = \left\langle \underline{\Phi(\vec{u}_\Lambda)} ,
			\underline{ \Phi(\vec{u})-\sum_{j\geq 2} \Phi(\vec{u}_{\Lambda_j})   } \right\rangle
		\end{equation}
		where the second vector comes from equation \eqref{eq:T2}.	
		The triangle inequality from lemma \ref{lemma:underlinezero} yields
		\begin{equation}
			\underline{ \Phi_i(\vec{u}) - \sum_{j \geq 2}
			 \Phi_i(\vec{u}_{\Lambda_j}) }
			 \leq 
			\underline{ \Phi_i(\vec{u})}	
			+\underline{ \sum_{j \geq 2} 
			 \Phi_i(\vec{u}_{\Lambda_j}) }
		\end{equation}				
		as well as 
		\begin{equation}
			\underline{ \sum_{j \geq 2} 
			 \Phi_i(\vec{u}_{\Lambda_j}) } \leq   \sum_{j \geq 2} \underline{
			 \Phi_i(\vec{u}_{\Lambda_j}) }
		\end{equation}					
		for each component $i=1,\ldots,P$. 	
		We proceed with the second scalar product in equation \eqref{eq:T1}
		\begin{equation}
			\left\langle \underline{ \Phi(\vec{u}_\Lambda)} ,
			\sum_{j\geq 2}  \underline{ \Phi(\vec{u}_{\Lambda_j})} \right\rangle =
			\left\langle \underline{\Phi(\vec{u}_{\Lambda_0}) } , 
			\sum_{j\geq 2} \underline{ \Phi(\vec{u}_{\Lambda_j}) } \right\rangle 
			+\left\langle \underline{ \Phi(\vec{u}_{\Lambda_1}) } , 
			\sum_{j\geq 2} \underline{ \Phi(\vec{u}_{\Lambda_j})} \right\rangle  
		\end{equation}
		where we again used the linearity and triangle inequality in each component,
		\begin{equation}
			\underline{\Phi_i(\vec{u}_\Lambda)}= \underline{ \Phi_i(\vec{u}_{\Lambda_0})
			+  \Phi_i(\vec{u}_{\Lambda_1})} \leq \underline{ \Phi_i(\vec{u}_{\Lambda_0}) } 
			+\underline{ \Phi_i(\vec{u}_{\Lambda_1})} \, .
		\end{equation}				
		For $m=0,1$ we first write the sum outside of the scalar product and apply lemma 
		\ref{lemma:delta2kinequality}
		\begin{equation}
			\left\langle \underline{\Phi(\vec{u}_{\Lambda_m})} ,
			\sum_{j\geq 2} \underline{ \Phi(\vec{u}_{\Lambda_j}) } \right\rangle =
			\sum_{j\geq 2}  \left\langle  \underline{ \Phi(\vec{u}_{\Lambda_m})} , \underline{
			\Phi(\vec{u}_{\Lambda_j})} \right\rangle 
			\leq \sum_{j\geq 2}
			\delta_{2k} \Vert \vec{u}_{\Lambda_m} \Vert_2\Vert \vec{u}_{\Lambda_j} \Vert_2
		\end{equation}
		and then lemma \ref{lemma:LambdaSums} to get
		\begin{equation}
			\left\langle \underline{ \Phi(\vec{u}_{\Lambda_m}) } , 
			\sum_{j\geq 2} \underline{ \Phi(\vec{u}_{\Lambda_j}) } \right\rangle \leq 
			\delta_{2k} \Vert \vec{u}_{\Lambda_m} \Vert_2 \frac{\Vert \vec{u}_{\Lambda_0^c}\Vert_1}{\sqrt{k}}
			\, .
		\end{equation}
		We add the two inequalities for $m=0$ and $m=1$ to get
		\begin{equation}
			\left\langle \Phi(\vec{u}_\Lambda) ,
			\sum_{j\geq 2} \Phi(\vec{u}_{\Lambda_j}) \right\rangle \leq 
			\delta_{2k} (\Vert \vec{u}_{\Lambda_0} \Vert_2 + \Vert \vec{u}_{\Lambda_1} \Vert_2 )
			\frac{\Vert \vec{u}_{\Lambda_0^c}\Vert_1}{\sqrt{k}} \, .
		\end{equation}
		From lemma \ref{lemma:sqrt2} we know that
		\begin{equation}
			\Vert \vec{u}_{\Lambda_0} \Vert_2 + \Vert \vec{u}_{\Lambda_1} \Vert_2 \leq 
			\sqrt{2}\, \Vert \vec{u}_{\Lambda_0} + \vec{u}_{\Lambda_1} \Vert_2 =\sqrt{2} \Vert 
			\vec{u}_{\Lambda} \Vert_2 \, .
		\end{equation}
		We combine all these results to get
		\begin{equation}
			(1-\delta_{2k}) \Vert \vec{u}_\Lambda \Vert_2^2 \leq \langle \underline{ \Phi(\vec{u}_\Lambda)},
			\underline{ \Phi(
			\vec{u} )} \rangle + \delta_{2k} \sqrt{2} \Vert \vec{u}_\Lambda \Vert_2 \frac{\Vert 
			\vec{u}_{\Lambda_0^c}
			\Vert_1}{\sqrt{k}} \, .
		\end{equation}
		For $\delta_{2k} < 1$ and with $\alpha$ and $\beta$ as defined above we can divide the inequality by 
		$(1-\delta)\Vert \vec{u}_\Lambda \Vert_2$ to get the desired inequality
		\begin{equation}
			\Vert \vec{u}_\Lambda \Vert_2 \leq \alpha \frac{\Vert \vec{u}_{\Lambda_0^c}\Vert_1}{\sqrt{k}} 
			+ \beta \frac{\langle \underline{\Phi(\vec{u}_\Lambda)} ,\underline{\Phi(\vec{u})} \rangle}{
			\Vert \vec{u}_\Lambda \Vert_2} \, .
		\end{equation}
	\end{proof}
	We want to make use of the latter proposition in the following way: 
	Say $\vec{v}\in\mathcal{U}$ is a fixed 
	vector, e.g., the true model error that we want to reconstruct, and $\vec{u}$ is our estimate for the 
	model 
	error, e.g., obtained by some optimization procedure. To measure how good the optimization procedure 
	performs, we compute the difference $\vec{w}:=\vec{u}-\vec{v}$ and utilize the proposition to get an 
	upper bound for $\Vert \vec{w} \Vert_2$
	
	Now assume the RIP$2k$ holds with a constant $\delta_{2k}< \sqrt{2}-1$. For the proposition we can choose 
	an arbitrary index set $\Lambda_0$ of size 
	$k$. We choose $\Lambda_0$ such that it corresponds to the $k$ components of $\underline{\vec{v}}$ with 
	highest magnitude. As 
	said in the proposition, $\Lambda_1$ will now correspond to the $k$ largest components in 
	$\underline{\vec{w}_{\Lambda_0^c}}$, $\Lambda_2$ to the second largest an so forth and we set 
	$\Lambda=\Lambda_0 \cup \Lambda_1$.
	
	Recap, that $\sigma_k(\vec{v})_q$ is the distance between $\vec{v}$ and the 
	the best $k$-sparse approximation \cite{foucart_mathematical_2013}
	\begin{equation}
		\sigma_k(\vec{v})_q := \min_{\tilde{\vec{v}}\in\Sigma_k} \Vert \tilde{\vec{v}}-\vec{v} \Vert_q 
	\end{equation}
	and note that
	\begin{equation}
		\sigma_k(\vec{v})_1 = \Vert \vec{v}_{\Lambda_0} - \vec{v} \Vert_1 \, . \label{eq:T5}
	\end{equation}
	To see the latter equation, let $\tilde{\vec{v}}\in\Sigma_k$ such that 
	$\Vert \tilde{\vec{v}} - \vec{v} \Vert_1 $ is minimal. 
	Since $\tilde{\vec{v}}$ is $k$-sparse, there is a $\tilde{\Lambda}$ such that $\tilde{\vec{v}}
	=\tilde{\vec{v}}_{\tilde{\Lambda}}$.
	Written as a sum 
	\begin{equation}
	 	\Vert \tilde{\vec{v}} - \vec{v} \Vert_1 = \sum_{i\in\tilde{\Lambda}} \underline{\tilde{v}_i - v_i}  +
	 	\sum_{i\in\tilde{\Lambda}^c} \underline{v_i} \, .
	\end{equation}
	The non-zero components must be chosen $\tilde{\vec{v}}_i=\vec{v}_i$ since this makes the 
	first sum vanish. In order to minimize the second sum the index set $\tilde{\Lambda}^c$ must 
	correspond to 
	the smallest $\underline{v_i}$, in other words, $\tilde{\Lambda}$ corresponds to the largest 
	$\underline{v_i}$, thus $\tilde{\vec{v}}=\vec{v}_{\Lambda_0}$. 
	
	In the remainder of this section we follow the argumentation line 
	of \cite{yonina_c_eldar_compressed_nodate} where the static problem in $\mathbb{R}^N$ was considered.
	Therein, many results are utilized that were first presented in \cite{candes_restricted_2008}.
	We show, how a line of reasoning can be made for the general case of composite Banach spaces, i.e., 
	valid for dynamic systems.
	%
	% Lemma
	%
	\begin{lemma} \label{lemma:alphabetalemma}
		Consider the setting explained above and assume 
		$\Vert \vec{u}\Vert_1 \leq \Vert \vec{v}\Vert_1$. Then
		\begin{equation}
			\Vert \vec{w} \Vert_2 \leq 2 \Vert \vec{w}_\Lambda \Vert_2 + 2
			 \frac{\sigma_k(\vec{v})_1}{\sqrt{k}}
			\, .
		\end{equation}
	\end{lemma}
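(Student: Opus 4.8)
The plan is the classical compressed-sensing tail-and-cone argument, adapted to the composite function-norm setting. First I would split $\vec{w}=\vec{w}_\Lambda+\vec{w}_{\Lambda^c}$ and apply the triangle inequality for $\Vert\cdot\Vert_2$, giving $\Vert\vec{w}\Vert_2\le\Vert\vec{w}_\Lambda\Vert_2+\Vert\vec{w}_{\Lambda^c}\Vert_2$. Since $\Lambda^c=\Lambda_2\dot{\cup}\Lambda_3\dot{\cup}\ldots$, a second triangle inequality yields $\Vert\vec{w}_{\Lambda^c}\Vert_2\le\sum_{j\ge2}\Vert\vec{w}_{\Lambda_j}\Vert_2$, and Lemma~\ref{lemma:LambdaSums}, applied to $\vec{w}$ with the designated index set $\Lambda_0$, bounds this tail by $\Vert\vec{w}_{\Lambda_0^c}\Vert_1/\sqrt{k}$. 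At this point I have $\Vert\vec{w}\Vert_2\le\Vert\vec{w}_\Lambda\Vert_2+\Vert\vec{w}_{\Lambda_0^c}\Vert_1/\sqrt{k}$, and the remaining work is to convert the tail $\ell_1$-mass into the two terms of the claim.

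The heart of the argument is the cone condition, and this is where the optimality hypothesis $\Vert\vec{u}\Vert_1\le\Vert\vec{v}\Vert_1$ enters. Writing $\vec{u}=\vec{v}+\vec{w}$ and using additivity of the $1$-norm over the disjoint supports $\Lambda_0,\Lambda_0^c$ (Proposition~\ref{prop:disjointSupport}), I would expand $\Vert\vec{v}\Vert_1\ge\Vert\vec{v}+\vec{w}\Vert_1$ as a sum over $\Lambda_0$ and $\Lambda_0^c$. On each component I apply the reverse form of the underline triangle inequality (which follows from parts~2 and~3 of Lemma~\ref{lemma:underlinezero}), namely $\underline{v_i+w_i}\ge\underline{v_i}-\underline{w_i}$ on $\Lambda_0$ and $\underline{v_i+w_i}\ge\underline{w_i}-\underline{v_i}$ on $\Lambda_0^c$. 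This gives $\Vert\vec{v}\Vert_1\ge\Vert\vec{v}_{\Lambda_0}\Vert_1-\Vert\vec{w}_{\Lambda_0}\Vert_1+\Vert\vec{w}_{\Lambda_0^c}\Vert_1-\Vert\vec{v}_{\Lambda_0^c}\Vert_1$. Substituting $\Vert\vec{v}\Vert_1=\Vert\vec{v}_{\Lambda_0}\Vert_1+\Vert\vec{v}_{\Lambda_0^c}\Vert_1$ and cancelling $\Vert\vec{v}_{\Lambda_0}\Vert_1$ rearranges to the cone condition $\Vert\vec{w}_{\Lambda_0^c}\Vert_1\le\Vert\vec{w}_{\Lambda_0}\Vert_1+2\Vert\vec{v}_{\Lambda_0^c}\Vert_1$, and by \eqref{eq:T5} the last term equals $2\,\sigma_k(\vec{v})_1$.

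Finally I would close the estimate. Dividing the cone condition by $\sqrt{k}$ and inserting it into the bound from the first step leaves the summand $\Vert\vec{w}_{\Lambda_0}\Vert_1/\sqrt{k}$ to be absorbed into $\Vert\vec{w}_\Lambda\Vert_2$. Because $\vec{w}_{\Lambda_0}$ is supported on at most $k$ indices, Lemma~\ref{lemma:sqrtk} gives $\Vert\vec{w}_{\Lambda_0}\Vert_1/\sqrt{k}\le\Vert\vec{w}_{\Lambda_0}\Vert_2$, and since $\Lambda_0\subseteq\Lambda$, Proposition~\ref{prop:disjointSupport} yields $\Vert\vec{w}_{\Lambda_0}\Vert_2\le\Vert\vec{w}_\Lambda\Vert_2$. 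Collecting the resulting two copies of $\Vert\vec{w}_\Lambda\Vert_2$ produces the factor $2$ and the claimed bound $\Vert\vec{w}\Vert_2\le2\Vert\vec{w}_\Lambda\Vert_2+2\,\sigma_k(\vec{v})_1/\sqrt{k}$.

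I expect the main obstacle to be the componentwise handling of the cone condition: in $\mathbb{R}^N$ one manipulates signs of scalar entries directly, but here each ``entry'' is a function norm $\underline{w_i}=\Vert w_i\Vert_p$, so the reverse triangle inequality must be justified at the level of the underline and then combined with disjoint-support additivity rather than with scalar identities. Once that step is set up correctly, the rest is bookkeeping with inequalities already established for underlined vectors.
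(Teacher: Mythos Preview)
Your proposal is correct and follows essentially the same route as the paper's proof: the same triangle-inequality split, the same tail bound via Lemma~\ref{lemma:LambdaSums}, the same cone condition derived from $\Vert\vec{u}\Vert_1\le\Vert\vec{v}\Vert_1$ via disjoint-support additivity and the componentwise reverse triangle inequality for underlines, and the same closing step through Lemma~\ref{lemma:sqrtk} and $\Lambda_0\subseteq\Lambda$. The only cosmetic difference is that you invoke the exact identity $\Vert\vec{v}\Vert_1=\Vert\vec{v}_{\Lambda_0}\Vert_1+\Vert\vec{v}_{\Lambda_0^c}\Vert_1$ to cancel directly, whereas the paper writes the intermediate inequality $\Vert\vec{v}\Vert_1-\Vert\vec{v}_{\Lambda_0}\Vert_1\le\sigma_k(\vec{v})_1$ before noting it is in fact an equality.
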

	%
	% Proof
	%
	\begin{proof}
		We begin with splitting $\vec{w}$ and applying the triangle inequality
		\begin{equation}
			\Vert \vec{w} \Vert_2 = \Vert \vec{w}_\Lambda + \vec{w}_{\Lambda^c} \Vert_2 \leq 
			\Vert \vec{w}_\Lambda \Vert_2 + \Vert \vec{w}_{\Lambda^c} \Vert_2 \, . 
		\end{equation}
		Since $\Lambda^c = \Lambda_2 \dot{\cup} \Lambda_3 \dot{\cup} \ldots$ we can apply the triangle inequality and 
		lemma \ref{lemma:LambdaSums}
		\begin{equation}\label{eq:T6}
			\Vert \vec{w}_{\Lambda^c} \Vert_2 = \left\Vert \sum_{j\geq 2} \vec{w}_{\Lambda_j} \right\Vert_2 
			\leq \sum_{j\geq 2}\Vert \vec{w}_{\Lambda_j} \Vert_2 \leq 
			 \frac{\Vert \vec{w}_{\Lambda_0^c} \Vert_1}{\sqrt{k}} \, .
		\end{equation}			
		By construction $\vec{u}=\vec{v}+\vec{w}$ so it is clear that
		\begin{equation}
			\Vert\vec{v} + \vec{w} \Vert_1 \leq \Vert \vec{v} \Vert_1 \, .
		\end{equation}
		For the complementary sets $\Lambda_0$ and $\Lambda_0^c$ we can apply 
		proposition \ref{prop:disjointSupport} with $q=1$ to get
		\begin{equation}\label{eq:T3}
			\Vert \vec{v} + \vec{w} \Vert_1 = \Vert \vec{v}_{\Lambda_0} + \vec{w}_{\Lambda_0} \Vert_1
			+  \Vert \vec{w}_{\Lambda_0^c} + \vec{v}_{\Lambda_0^c} \Vert_1 \, .
		\end{equation}
		In the proof of lemma \ref{lemma:2norminequality} we know for each component that
		\begin{equation}
			\vert \vert \underline{v_i}\vert - \vert \underline{w_i}\vert \vert \leq \vert 
			\underline{v_i+w_i} \vert \, .
		\end{equation}
		So we estimate the norm 
		\begin{equation} \label{eq:T4}
		\begin{aligned}
			\Vert & \vec{v}_{\Lambda_0} + \vec{w}_{\Lambda_0}  \Vert_1 = 
			\sum_{i\in \Lambda_0} \vert \underline{v_i+w_i} \vert \geq 
			\sum_{i\in\Lambda_0} \vert \vert \underline{v_i} \vert - \vert \underline{w_i} \vert \vert
			\\ & \geq \left\vert \sum_{i\in\Lambda_0} (\vert \underline{v_i} \vert - \vert \underline{w_i} \vert )  	
			\right\vert 
			= \vert (\Vert \vec{v}_{\Lambda_0} \Vert_1 - \Vert \vec{w}_{\Lambda_0} \Vert_1) \vert
			\geq \Vert \vec{v}_{\Lambda_0} \Vert_1 - \Vert \vec{w}_{\Lambda_0} \Vert_1 
			\end{aligned}
		\end{equation}
		and the same holds for $\Lambda_0^c$. With this result equation \eqref{eq:T3} becomes
		\begin{equation}
			\Vert \vec{v}_{\Lambda_0} \Vert_1 - \Vert \vec{w}_{\Lambda_0} \Vert_1 
			+\Vert \vec{w}_{\Lambda_0^c} \Vert_1 - \Vert \vec{v}_{\Lambda_0^c} \Vert_1 \leq \Vert \vec{v} \Vert_1
		\end{equation}
		which yields a lower bound for $\Vert \vec{w}_{\Lambda_0^c} \Vert_1$
		\begin{equation}
			\Vert \vec{w}_{\Lambda_0^c} \Vert_1  \leq 
			\Vert \vec{v} \Vert_1 -\Vert \vec{v}_{\Lambda_0} \Vert_1 + \Vert 
			\vec{w}_{\Lambda_0} \Vert_1 + \Vert \vec{v}_{\Lambda_0^c} \Vert_1 \, .
		\end{equation}		 
		A calculation as in equation \eqref{eq:T4} and equation \eqref{eq:T5} show that
		\begin{equation}
			\Vert \vec{v} \Vert_1 -\Vert \vec{v}_{\Lambda_0} \Vert_1 \leq 
			\Vert \vec{v} - \vec{v}_{\Lambda_0} \Vert_1 = \sigma_k(\vec{v})_1 
		\end{equation}
		and since $\Lambda_0^c$ is complementary to $\Lambda_0$, 
		$\vec{v}=\vec{v}_{\Lambda_0}+\vec{v}_{\Lambda_0^c}$, thus
		\begin{equation}
			\Vert \vec{v}_{\Lambda_0^c} \Vert_1 = \Vert \vec{v} - \vec{v}_{\Lambda_0} \Vert_1 = \sigma_k
			(\vec{v})_1 \, .
		\end{equation}
		Combining the latter three results we get
		\begin{equation}
			\Vert \vec{w}_{\Lambda_0^c} \Vert_1 \leq \Vert \vec{w}_{\Lambda_0} \Vert_1 + 2 \sigma_k(\vec{v})_1 
			\, .
		\end{equation}
		Inserting the latter result into equation \eqref{eq:T6} yields
		\begin{equation}
			\Vert \vec{w}_{\Lambda^c} \Vert_2 \leq \frac{ \Vert \vec{w}_{\Lambda_0} \Vert_1 +
			2 \sigma_k(\vec{v})_1}{\sqrt{k}} \, .
		\end{equation}
		From lemma \ref{lemma:sqrtk} we get
		\begin{equation} \label{eq:T7}
			\Vert \vec{w}_{\Lambda^c} \Vert_2 \leq \Vert \vec{w}_{\Lambda_0} \Vert_2 + 2 
			\frac{ \sigma_k(\vec{v})_1}{\sqrt{k}} \, .
		\end{equation}
		We now use the triangle inequality
		\begin{equation}
			\Vert \vec{w} \Vert_2 \leq \Vert \vec{w}_\Lambda \Vert_2 + \Vert \vec{w}_{\Lambda^c} \Vert_2
		\end{equation}
		and the fact, that $\Lambda_0 \subseteq \Lambda$, thus
		\begin{equation}
			\Vert \vec{w}_{\Lambda_0} \Vert_2 \leq \Vert \vec{w}_{\Lambda} \Vert_2
		\end{equation}
		to get the desired inequality
		\begin{equation}
			\Vert \vec{w} \Vert \leq 2 \Vert \vec{w}_\Lambda \Vert_2 + 
			 2 \frac{ \sigma_k(\vec{v})_1}{\sqrt{k}} \, .
		\end{equation}
	\end{proof}
	The following theorem follows from a combination of proposition \ref{prop:alphabeta} and
	lemma \ref{lemma:alphabetalemma}. It is the key result for the optimization 
	problem we present for linear dynamic input-output systems.
	%
	% Theorem
	%	
	\begin{theorem} \label{theorem:C0C1}
		Let $\vec{u},\vec{v}\in\mathcal{U}$ with $\Vert \vec{u} \Vert_1 \leq \Vert \vec{v}\Vert_1$ and 
		assume for $\Phi$ we have the RIP$2k$ with $\delta_{2k}< \sqrt{2}-1$. Let $\Lambda_0$ correspond
		to the 
		$k$ largest components of $\underline{v}$. We set $\vec{w}:=\vec{u}-\vec{v}$ and let $\Lambda_1$ 
		correspond to the $k$ largest components of $\underline{\vec{w}_{\Lambda_0^c}}$, $\Lambda_2$ to the 
		second largest and so forth. Let $\Lambda:= \Lambda_0 \cup \Lambda_1$. There are two constants 
		$C_0$ and 
		$C_1$ such that
		\begin{equation}
			\Vert \vec{w} \Vert_2 \leq C_0 \frac{\sigma_k(\vec{v})_1}{\sqrt{k}} + C_1 
			\frac{ \langle \underline{\Phi(\vec{w}_\Lambda)} , \underline{\Phi(\vec{w})} \rangle  }{\Vert 
			\vec{w}_\Lambda \Vert_2} \, .
		\end{equation}
	\end{theorem}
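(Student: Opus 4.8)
The plan is to combine Proposition~\ref{prop:alphabeta} and Lemma~\ref{lemma:alphabetalemma} exactly as advertised, the only real work being to close a self-referential inequality for the quantity $\Vert \vec{w}_\Lambda \Vert_2$. I would first apply Proposition~\ref{prop:alphabeta} to the error vector $\vec{w}=\vec{u}-\vec{v}$ itself, with $\Lambda_0$ taken as the $k$ largest components of $\underline{\vec{v}}$ (which is an admissible choice since $\text{card}\,\Lambda_0\le k$, and $\Lambda_1,\Lambda_2,\ldots$ are then exactly the decreasing blocks of $\underline{\vec{w}_{\Lambda_0^c}}$ required by the proposition). This yields
\begin{equation}
\Vert \vec{w}_\Lambda \Vert_2 \leq \alpha\,\frac{\Vert \vec{w}_{\Lambda_0^c}\Vert_1}{\sqrt{k}} + \beta\,\frac{\langle \underline{\Phi(\vec{w}_\Lambda)},\underline{\Phi(\vec{w})}\rangle}{\Vert \vec{w}_\Lambda\Vert_2}
\end{equation}
with $\alpha=\sqrt{2}\delta_{2k}/(1-\delta_{2k})$ and $\beta=1/(1-\delta_{2k})$.

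Next I would eliminate the $\ell_1$ tail term $\Vert \vec{w}_{\Lambda_0^c}\Vert_1$. The hypothesis $\Vert\vec{u}\Vert_1\le\Vert\vec{v}\Vert_1$ is precisely what drives the cone estimate established inside the proof of Lemma~\ref{lemma:alphabetalemma}, namely $\Vert \vec{w}_{\Lambda_0^c}\Vert_1 \le \Vert \vec{w}_{\Lambda_0}\Vert_1 + 2\,\sigma_k(\vec{v})_1$; I would reuse that intermediate inequality here. Since $\vec{w}_{\Lambda_0}$ is $k$-sparse and $\Lambda_0\subseteq\Lambda$, Lemma~\ref{lemma:sqrtk} gives $\Vert\vec{w}_{\Lambda_0}\Vert_1\le\sqrt{k}\,\Vert\vec{w}_{\Lambda_0}\Vert_2\le\sqrt{k}\,\Vert\vec{w}_\Lambda\Vert_2$, so that $\Vert \vec{w}_{\Lambda_0^c}\Vert_1/\sqrt{k}\le \Vert\vec{w}_\Lambda\Vert_2 + 2\sigma_k(\vec{v})_1/\sqrt{k}$. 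Substituting this into the displayed bound produces a $\Vert\vec{w}_\Lambda\Vert_2$ on both sides.

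The crux — and what I expect to be the only genuinely delicate point — is resolving this circularity: collecting the $\Vert\vec{w}_\Lambda\Vert_2$ terms gives $(1-\alpha)\Vert\vec{w}_\Lambda\Vert_2 \le 2\alpha\,\sigma_k(\vec{v})_1/\sqrt{k} + \beta\,\langle \underline{\Phi(\vec{w}_\Lambda)},\underline{\Phi(\vec{w})}\rangle/\Vert\vec{w}_\Lambda\Vert_2$, and this is only useful if $1-\alpha>0$. A short computation shows $\alpha<1 \iff (\sqrt{2}+1)\delta_{2k}<1 \iff \delta_{2k}<\sqrt{2}-1$, which is exactly the standing assumption; this is where the RIP threshold is consumed. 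Dividing by $1-\alpha$ isolates $\Vert\vec{w}_\Lambda\Vert_2$.

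Finally I would feed this into Lemma~\ref{lemma:alphabetalemma}, which bounds $\Vert\vec{w}\Vert_2\le 2\Vert\vec{w}_\Lambda\Vert_2 + 2\sigma_k(\vec{v})_1/\sqrt{k}$. Substituting the isolated estimate for $\Vert\vec{w}_\Lambda\Vert_2$ and grouping the two $\sigma_k(\vec{v})_1/\sqrt{k}$ contributions against the single inner-product term yields the claimed inequality, with the constants reading off as $C_0 = 2(1+\alpha)/(1-\alpha)$ and $C_1 = 2\beta/(1-\alpha)$, both finite and non-negative under $\delta_{2k}<\sqrt{2}-1$. Everything beyond the $\alpha<1$ observation is routine triangle-inequality bookkeeping, so I would keep the exposition terse and simply invoke Lemmas~\ref{lemma:sqrtk}, \ref{lemma:alphabetalemma} and Proposition~\ref{prop:alphabeta} at the appropriate places.
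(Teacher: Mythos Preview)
Your proposal is correct and follows essentially the same route as the paper: apply Proposition~\ref{prop:alphabeta} to $\vec{w}$, replace $\Vert\vec{w}_{\Lambda_0^c}\Vert_1$ via the cone inequality from the proof of Lemma~\ref{lemma:alphabetalemma} together with Lemma~\ref{lemma:sqrtk}, use $\delta_{2k}<\sqrt{2}-1\Rightarrow\alpha<1$ to solve for $\Vert\vec{w}_\Lambda\Vert_2$, and finish with Lemma~\ref{lemma:alphabetalemma}. Your constants $C_0=2(1+\alpha)/(1-\alpha)$ and $C_1=2\beta/(1-\alpha)$ agree with the paper's (the paper writes $C_0=4\alpha/(1-\alpha)+2$, which is the same expression).
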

	%
	% Proof
	%
	\begin{proof}
		We start with proposition \ref{prop:alphabeta} applied to $\vec{w}$
		\begin{equation}
			\Vert \vec{w}_\Lambda \Vert_2 \leq \alpha \frac{\Vert \vec{w}_{\Lambda_0^c} 
			\Vert_1}{\sqrt{k} } + \beta 
			\frac{ \langle \underline{\Phi(\vec{w}_\Lambda)} , \underline{\Phi(\vec{w})} \rangle  }{\Vert 
			\vec{w}_\Lambda \Vert_2} \, .
		\end{equation}
		In lemma \ref{lemma:alphabetalemma} equation \ref{eq:T7} we found an estimate of 
		$\Vert \vec{w}_{\Lambda_0^c} \Vert_1$, inserted into the latter inequality
		\begin{equation}
			\Vert \vec{w}_\Lambda \Vert_2 \leq  \alpha \frac{\Vert \vec{w}_{\Lambda_0} \Vert_1}{\sqrt{k}} + 
			2\alpha \frac{\sigma_k(\vec{v})_1}{\sqrt{k}} + \beta 
			\frac{ \langle \underline{\Phi(\vec{w}_\Lambda)} , \underline{\Phi(\vec{w})} \rangle  }{\Vert 
			\vec{w}_\Lambda \Vert_2} \, .
		\end{equation}
		The first term can be treated with lemma \ref{lemma:sqrtk}
		and then we use the fact that $\Lambda_0\subseteq \Lambda$, thus 
		$\Vert \vec{w}_{\Lambda_0}\Vert_2 \leq 
		\Vert \vec{w}_\Lambda \Vert_2$, to get
		\begin{equation}
			\Vert \vec{w}_\Lambda \Vert_2 \leq  \alpha \Vert \vec{w}_{\Lambda} \Vert_2 + 
			2\alpha \frac{\sigma_k(\vec{v})_1}{\sqrt{k}} + \beta 
			\frac{ \langle \underline{\Phi(\vec{w}_\Lambda)} , \underline{\Phi(\vec{w})} \rangle  }{\Vert 
			\vec{w}_\Lambda \Vert_2} \, .
		\end{equation}
		Due to the assumption $\delta_{2k}< \sqrt{2}-1$ we also get $\alpha < \sqrt{2}-1 <1$ hence 
		$(1-\alpha)$ is 
		positive so we rewrite the latter equation as
		\begin{equation}
			\Vert \vec{w}_\Lambda \Vert_2 \leq  
			\frac{2\alpha}{1-\alpha} \frac{\sigma_k(\vec{v})_1}{\sqrt{k}} + \frac{\beta }{1-\alpha}
			\frac{ \langle \underline{\Phi(\vec{w}_\Lambda)} , \underline{\Phi(\vec{w})} \rangle  }{\Vert 
			\vec{w}_\Lambda \Vert_2} \, .
		\end{equation}
		To close the proof we use lemma  \ref{lemma:alphabetalemma} to get
		\begin{equation}
			\frac{1}{2} \left( \Vert \vec{w} \Vert - 2 \frac{\sigma_k(\vec{v})_1}{\sqrt{k}} \right)
			 \leq  
			\frac{2\alpha}{1-\alpha} \frac{\sigma_k(\vec{v})_1}{\sqrt{k}} + \frac{\beta }{1-\alpha}
			\frac{ \langle \underline{\Phi(\vec{w}_\Lambda)} , \underline{\Phi(\vec{w})} \rangle  }{\Vert 
			\vec{w}_\Lambda \Vert_2} 
		\end{equation}
		and with 
		\begin{equation}
			C_0 := \left(
				\frac{4\alpha}{1-\alpha} +2
			\right)
		\end{equation}
		and 
		\begin{equation}
			C_1:= \frac{2\beta}{1-\alpha}
		\end{equation}
		we finally obtain
		\begin{equation}
			\Vert \vec{w} \Vert_2 \leq C_0 \frac{\sigma_k(\vec{v})_1}{\sqrt{k}} + C_1 
			\frac{ \langle \underline{\Phi(\vec{w}_\Lambda)} , \underline{\Phi(\vec{w})} \rangle  }{\Vert 
			\vec{w}_\Lambda \Vert_2} \, .
		\end{equation}
	\end{proof}
	We can now apply theorem \ref{theorem:C0C1} to the solutions of the $\Vert \cdot \Vert_0$ and 
	$\Vert \cdot 
	\Vert_1$ optimization problems. Assume $\vec{y}^\text{data}\in\mathcal{Y}$ is given data which is 
	produced by a sparse ``true'' input 
	$\vec{w}^*\in \mathcal{U}$, i.e.,
	\begin{equation}
		\Phi(\vec{w}^*) = \vec{y}^\text{data} \, .
	\end{equation}
	We want to infer $\vec{w}^*$ from $\vec{y}^\text{data}$.
	Sparsity of the ``true'' input $\vec{w}^*$ means, that for all $\vec{u}$ in
	\begin{equation}
		\mathcal{A} := \left\{ \vec{u} \in \mathcal{U} \, | \, \Vert \Phi(\vec{u}) - \vec{y} \Vert_2 = 0
		\right\}
	\end{equation}
	we find 
	\begin{equation}
		\Vert \vec{w}^* \Vert_0 \leq \Vert \vec{u} \Vert_0 \, .
	\end{equation}
	Now let $\hat{\vec{w}}$ be a solution of the convex $\Vert \cdot \Vert_1$ optimization problem, that is, 
	for all 
	$\vec{u}\in\mathcal{A}$ we find
	\begin{equation}
		\Vert \hat{\vec{w}} \Vert_1 \leq \Vert \vec{u} \Vert_1 \, .
	\end{equation}
	We can now apply theorem \ref{theorem:C0C1} to $\vec{w}=\hat{\vec{w}}-\vec{w}^*$. Note, that 
	\begin{equation}
		\Phi(\hat{\vec{w}}) = \Phi(\vec{w}^*)\, \Rightarrow \,\Phi(\vec{w})=0 \, .
	\end{equation}		
	We find
	\begin{equation}
		\Vert \hat{\vec{w}}-\vec{w}^* \Vert_2 \leq C_0 \frac{\sigma_k(\vec{w}^*)_1}{\sqrt{k}} \, .
	\end{equation}
	Note that by definition $\sigma_k(\vec{w}^*)_1$ is the best $k$-sparse approximation to $\vec{w}^*$ in 
	$1$-norm. Thus we have shown, that the convex $\Vert \cdot\Vert_1$ optimization yields an 
	approximation of 
	the unknown ``true'' input.
	
	Theorem \ref{theorem:C0C1} might be adjusted for various scenarios where we can make further assumptions 
	about the model error $\vec{w}^*$ or about stochastic or measurement errors. We want to derive a last 
	inequality for the case of bounded noise. Let $\xi$ represent the noise, and $\Phi_\text{noiseless}$ the 
	solution operator we have discussed so far. We now consider a new solution operator
	\begin{equation}
		\Phi(\vec{u})(t) := \Phi_\text{noiseless}(\vec{u})(t) + \xi(t) 
	\end{equation}
	that incorporates the noise $\xi$. Since $\xi(t)\in\mathbb{R}^P$ we can interpret 
	$\xi\in \mathcal{Y}$ and use the norm on $\mathcal{Y}$. We assume that $\xi$ is a bounded noise with 
	$\epsilon > 0$,
	\begin{equation}
		\Vert \xi \Vert_2 \leq \epsilon \, .
	\end{equation}
	We adjust the solution set
	\begin{equation}
		\mathcal{A} := \left\{ \vec{u} \in \mathcal{U} \, | \, \Vert \Phi(\vec{u}) - \vec{y} \Vert_2 
		\leq \epsilon
		\right\} 
	\end{equation}
	and define $\vec{w}^*,\hat{\vec{w}} \in \mathcal{A}$ as before with minimal $\Vert \cdot \Vert_0$ and 
	$\Vert \cdot \Vert_1$ norm, respectively.	
	From the theorem we get
	\begin{equation} \label{eq:T9}
			\Vert \vec{w} \Vert_2 \leq C_0 \frac{\sigma_k(\vec{w}^*)_1}{\sqrt{k}} + C_1 
			\frac{ \langle \underline{\Phi(\vec{w}_\Lambda)} , \underline{\Phi(\vec{w})} \rangle  }{\Vert 
			\vec{w}_\Lambda \Vert_2} \, .
	\end{equation}
	We want to estimate the scalar product by the Cauchy-Schwartz inequality
	\begin{equation} \label{eq:T8}
		\langle \underline{\Phi(\vec{w}_\Lambda)} , \underline{\Phi(\vec{w})} \rangle 
		\leq \Vert \Phi(\vec{w}_\Lambda) \Vert_2 \Vert \Phi(\vec{w}) \Vert_2 \, .
	\end{equation}
	By construction $\Lambda=\Lambda_0 \cup \Lambda_1$ has at most $2k$ elements. thus, it is possible to write 
	$\vec{w}=\vec{u}+\vec{v}$ where $\vec{u},\vec{v}\in \Sigma_k$ have disjoint support. With lemma
	\ref{lemma:underlinezero} we get
	\begin{equation}
		\Vert \Phi(\vec{w}_\Lambda)\Vert_2 \
		=\Vert \underline{\Phi(\vec{u}) + \Phi(\vec{v})} 
		\Vert_2		
		\leq \Vert \underline{\Phi(\vec{u})} + \underline{\Phi(\vec{v})} 
		\Vert_2 \, .
	\end{equation}
	We can now use the RIP$2k$ to get
	\begin{equation}
		 \Vert \Phi(\vec{w}_\Lambda)\Vert_2 \leq 
		 \sqrt{1+ \delta_{2k}} \Vert \underline{\vec{u}} + \underline{\vec{v}} \Vert_2
	\end{equation}
	and with lemma \ref{lemma:disjointSupport}
	\begin{equation}
		 \Vert \Phi(\vec{w}_\Lambda)\Vert_2 \leq 
		 \sqrt{1+ \delta_{2k}} \Vert \vec{u} + \vec{v} \Vert_2 =\sqrt{1+ \delta_{2k}} \Vert 
		 \vec{w}_\Lambda \Vert_2 \, .
	\end{equation}
	On the other hand we can write $\vec{w}= \hat{\vec{w}}-\vec{w}^*$ and get
	\begin{equation}
		\Vert \Phi(\vec{w}) \Vert = \left\Vert \left( \Phi(\hat{\vec{w}}) - \vec{y} \right) - \left( 
		\Phi(\vec{w}^*) - \vec{y} \right) \right\Vert_2 \leq
		 \left\Vert  \Phi(\hat{\vec{w}}) - \vec{y}  \right\Vert_2 + 
		 \left\Vert  
		\Phi(\vec{w}^*) - \vec{y} \right\Vert_2 \leq 2 \epsilon \, .
	\end{equation}
	With this, equation \eqref{eq:T8} becomes
	\begin{equation}
		\langle \underline{\Phi(\vec{w}_\Lambda)} , \underline{\Phi(\vec{w})} \rangle  \leq
		2 \epsilon \sqrt{1+\delta_{2k}} \Vert \vec{w}_\Lambda \Vert_2 
	\end{equation}
	and inserting this into equation \eqref{eq:T9} leads to
	\begin{equation}
		\Vert \vec{w} \Vert_2 \leq C_0 \frac{\sigma_k(\vec{w}^*)_1}{\sqrt{k}} + C_1 
			2 \epsilon \sqrt{1+\delta_{2k}} \, .
	\end{equation}
	We can now adjust the constant $C_2:= \sqrt{1+\delta_{2k}} C_1 $ to get the result
	\begin{equation}
	\Vert \hat{\vec{w}}-\vec{w}^* \Vert_2 \leq C_0 \frac{\sigma_k(\vec{w}^*)_1}{\sqrt{k}} + C_2
		 \epsilon 
	\end{equation}
	which is the equation from theorem 4 of the main text. 
	
\section{A Note on Weighted Gammoids}
	The edge weights of an influence graph are defined by
	\begin{equation}
		F(i\to j) := \frac{\partial  f_j(\vec{x})}{\partial x_i} \, .
	\end{equation}
	Since we only consider linear systems in this section, the edge weights are real valued constants. It is easy to see, 
	that there is a one-to-one correspondence between linear input-output systems and 
	weighted gammoids. However, 
	we want to emphasize, that the construction of the weighted gammoid is not restricted to linear systems, as 
	soon as we allow for state dependent weights, see for instance \cite{wey_rank_1998}. The investigation 
	of non-linear gammoids might yield 
	an approach 
	to a non-linear extension of our framework and shall be considered in future research.

	From \cite{wey_rank_1998} we deduced, that the gammoid of a system contains the information about the 
	structurally invertible input-output configurations. But we also now, that 
	structural properties are not sensitive to numerical ill-posedness. This is the reason why we have 
	examined the possibility of a convex optimization as well as the RIP$2k$ property. Those, however, are 
	hard to verify.	
	Fortunately we can again make use of the gammoid interpretation.
\subsection{Transfer Function} \label{sec:Transfer}
	The following lemma is known in the literature (see e.g. \cite{newman_networks_2010}), but usually given 
	for adjacency matrices with entries that are either zero or one. 
	We formulate it in a way that is consistent with our notation and such that the edge weights can be 
	arbitrary.
	%
	% Lemma
	%
	\begin{lemma} \label{lemma:Ak}
		Let $A\in\mathbb{R}^{N\times N}$ a matrix and $g=(\mathcal{N},\mathcal{E})$ the weighted graph 
		with nodes $\mathcal{N}=\{1,\ldots,N\}$ and edges $(i\to j)\in\mathcal{E}$ whenever $A_{ji}\neq 0$. For 
		each edge we define its weight as $F(i\to j):= A_{ji}$. The edge weights imply a weight 
		for sets of paths. Let $\mathcal{P}_k(a,b)$ denote the set of 
		paths from node $a$ to node $b$ of length $k$. Then
		\begin{equation}
			A^k_{ba} = F(\mathcal{P}_k(a,b))
		\end{equation}
	\end{lemma}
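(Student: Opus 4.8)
The plan is to prove the identity by induction on the path length $k$, exploiting the fact that matrix multiplication precisely mirrors the concatenation of paths. Throughout, ``path'' is understood in the sense of a walk, i.e.\ a sequence of consecutive edges that may revisit nodes; this is the convention under which the classical adjacency-matrix version of the statement holds, and it is what makes the concatenation argument below a clean bijection.

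For the base case $k=1$, a path from $a$ to $b$ is the single edge $(a\to b)$, which lies in the graph exactly when $A_{ba}\neq 0$. Thus $\mathcal{P}_1(a,b)$ is either the singleton $\{(a\to b)\}$ with weight $F(a\to b)=A_{ba}$, or empty, in which case the empty sum defining $F(\mathcal{P}_1(a,b))$ is $0=A_{ba}$. In both cases $A^1_{ba}=F(\mathcal{P}_1(a,b))$.

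For the inductive step, assume $A^k_{ca}=F(\mathcal{P}_k(a,c))$ for all nodes $a,c$. Writing the matrix product component-wise and using $A_{bc}=F(c\to b)$ together with the induction hypothesis gives
\[
	A^{k+1}_{ba}=\sum_{c=1}^N A_{bc}\,A^k_{ca}=\sum_{c=1}^N F(c\to b)\,F(\mathcal{P}_k(a,c)).
\]
The key observation is that every length-$(k+1)$ walk from $a$ to $b$ factors uniquely as a length-$k$ walk $\pi'$ from $a$ to some penultimate node $c$ followed by the edge $(c\to b)$, and the multiplicative definition of the path weight shows that this concatenation has weight $F(\pi')\,F(c\to b)$. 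Grouping the walks of $\mathcal{P}_{k+1}(a,b)$ by their penultimate node $c$ therefore yields
\[
	F(\mathcal{P}_{k+1}(a,b))=\sum_{c=1}^N F(c\to b)\!\!\sum_{\pi'\in\mathcal{P}_k(a,c)}\!\!F(\pi')=\sum_{c=1}^N F(c\to b)\,F(\mathcal{P}_k(a,c)),
\]
which coincides with the expression for $A^{k+1}_{ba}$ above, closing the induction.

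The only point requiring care — and the main, albeit minor, obstacle — is the bookkeeping of zero entries and the verification that the regrouping is a genuine bijection. If $A_{bc}=0$ then no edge $(c\to b)$ exists, so no length-$(k+1)$ walk has $c$ as penultimate node; the corresponding summand vanishes on both sides simultaneously, so the convention $F(\emptyset)=0$ keeps the combinatorial sum in step with the algebraic one. One must also confirm that each length-$(k+1)$ walk possesses a unique last edge, so that the map sending a walk to the pair $(\pi',\,c\to b)$ is bijective onto $\bigcup_c \mathcal{P}_k(a,c)$ paired with the admissible final edges; this guarantees that no walk is double-counted or omitted when the sum is reorganised.
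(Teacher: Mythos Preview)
Your proof is correct. The paper takes a slightly different but equally standard route: rather than inducting on $k$, it expands $A^k_{ba}$ directly as the iterated sum
\[
A^k_{ba}=\sum_{l_1,\ldots,l_{k-1}=1}^N A_{b l_{k-1}}\cdots A_{l_1 a},
\]
observes that a summand is nonzero precisely when the node list $(a,l_1,\ldots,l_{k-1},b)$ is a genuine walk in $g$, and then identifies each nonzero product with the weight $F(\pi)$ of the corresponding walk. Your induction packages the same idea recursively via the last edge, which makes the bijection and the handling of zero entries a bit more explicit; the paper's direct expansion, on the other hand, displays the full correspondence between products of $k$ matrix entries and length-$k$ walks in one line. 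Both arguments rely on the same interpretation of ``path'' as ``walk'', which you rightly flagged.
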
	
	%
	% Proof
	%
	\begin{proof}
		Let $l_0,l_1,\ldots,l_k\in\mathcal{N}$ be a list of nodes. If the path 
		$\pi:=(l_0\to \ldots \to l_k)$ 
		exists, then $\pi \in \mathcal{P}_k(l_0,l_k)$ and we can use the homomorphic property of $F$ to get
		\begin{equation}
			F(\pi) = F(l_0\to l_1)\ldots F(l_{k-1}\to l_k) = A_{l_kl_{k-1}} \ldots A_{l_1 l_0} \, .
		\end{equation}
		On the other hand, if $\pi$ does not exist, that means at least one of the terms $A_{l_il_{i-1}}$ 
		equals zero and
		\begin{equation}
			 A_{l_kl_{k-1}} \ldots A_{l_1 l_0} = 0 \, .
		\end{equation}
		When we compute the powers of $A$ we find
		\begin{equation}
			A^k_{ba} = \sum_{l_1,\ldots, l_{k-1}=1}^N A_{b l_{k-1}} \ldots A_{l_1 a}
		\end{equation}
		which sums up all node lists $l_1,\ldots,l_{k-1}\in\mathcal{N}$ with $l_0=a$ and $l_k=b$ fixed. It 
		is clear, that the terms in the sum do not vanish if and only if the path 
		$(a\to l_1 \to \ldots \to l_{k-1} \to b)$ exists. Thus we can replace the sum by
		\begin{equation}
			A^k_{ba} = \sum_{\pi \in \mathcal{P}_k(a,b)}^N A_{b l_{k-1}} \ldots A_{l_1 a}
		\end{equation}
		and we have already seen, that for an existing path we can replace the right hand side by 
		\begin{equation}
			A^k_{ba} = \sum_{\pi \in \mathcal{P}_k(a,b)} F(\pi) = F(\mathcal{P}_k(a,b))
		\end{equation}
		where the second equality comes simply from the definition of the weight function for sets of paths.
	\end{proof}
	In Laplace space, a linear dynamic input-output system takes the form
	\begin{equation}
		\tilde{\vec{y}}(\sigma) = T(\sigma) \tilde{\vec{w}}(\sigma)
	\end{equation}
	where $\sigma\in\mathbb{C}$ is a complex variable, $\tilde{\vec{w}}:\mathcal{T}$ and 
	$\tilde{\vec{y}}$ are the 
	Laplace transforms of the input $\vec{w}$ and output $\vec{y}$, 
	respectively, see for instance \cite{luenberger_introduction_1979}. One will realize, that 
	the transfer function $T$ is the Laplace representation of the operator $\Phi$.
	
	We have earlier discussed, that for linear systems, an input set $S$ can either be understood as a 
	restriction to the input space $\mathcal{U}$, or, equivalently, as a submatrix of the solution operator 
	$\Phi$ (in state space) and clearly also $T$ (in Laplace space).
	The differential equation of a linear system
	\begin{equation}
		\dot{\vec{x}}(t) = A \vec{x}(t) + \vec{w}(t)
	\end{equation}	
	can be written after Laplace transformation as
	\begin{equation}
		\tilde{\vec{x}}(\sigma) = (\sigma-A)^{-1} \tilde{\vec{w}}(\sigma) \, .
	\end{equation}
	We can interpret 
	\begin{equation}
		T^\text{full}(\sigma)=(\sigma-A)^{-1}
	\end{equation}		
	as the \emph{full transfer function}. For a given input set $S$ and output set $Z$ we can then simply take 
	the columns indicated by $S$ an rows indicated by $Z$ to get the transfer function for this specific 
	configuration.
	%
	% Proposition
	%
	\begin{proposition}\label{prop:Transferfunction}
		 Let $T^\text{full}$ be the full transfer function of a linear system and $i,j\in\mathcal{N}$ nodes 
		 in the weighted influence graph. Let $\mathcal{P}(i,j)$ denote the paths from $i$ to $j$. Then
		\begin{equation}
			T^\text{full}_{ji}(\sigma) = \frac{1}{\sigma} \sum_{\pi \in \mathcal{P}(i,j)} 
			\frac{F(\pi)}{\sigma^{\text{len}\,\pi}}
			\, .
		\end{equation}
	\end{proposition}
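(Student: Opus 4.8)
The plan is to expand the resolvent $(\sigma - A)^{-1}$ as a Neumann series and then read off its $(j,i)$ entry path by path using Lemma~\ref{lemma:Ak}. First I would factor out $\sigma^{-1}$, writing
\begin{equation}
	(\sigma - A)^{-1} = \sigma^{-1}\left(I - \sigma^{-1}A\right)^{-1} \, ,
\end{equation}
and observe that for $|\sigma|$ larger than the spectral radius of $A$ (equivalently, once $\sigma^{-1}A$ has operator norm below one) the geometric Neumann series converges absolutely, so that
\begin{equation}
	T^\text{full}(\sigma) = (\sigma - A)^{-1} = \sigma^{-1}\sum_{k=0}^{\infty} \sigma^{-k} A^{k} = \sum_{k=0}^{\infty} \frac{A^{k}}{\sigma^{k+1}} \, .
\end{equation}

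Next I would take the $(j,i)$ component of each side. Lemma~\ref{lemma:Ak} identifies $A^{k}_{ji}$ with the weight $F(\mathcal{P}_k(i,j))$ of the set of paths of length $k$ from $i$ to $j$, whence
\begin{equation}
	T^\text{full}_{ji}(\sigma) = \sum_{k=0}^{\infty} \frac{F(\mathcal{P}_k(i,j))}{\sigma^{k+1}} = \frac{1}{\sigma}\sum_{k=0}^{\infty} \sum_{\pi\in\mathcal{P}_k(i,j)} \frac{F(\pi)}{\sigma^{k}} \, ,
\end{equation}
using the definition $F(\mathcal{P}_k(i,j)) = \sum_{\pi\in\mathcal{P}_k(i,j)} F(\pi)$ in the second step. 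Since every $\pi$ in the inner sum satisfies $\text{len}\,\pi = k$, I may rewrite $\sigma^{-k}$ as $\sigma^{-\text{len}\,\pi}$. Finally, the set $\mathcal{P}(i,j)$ of all paths from $i$ to $j$ is the disjoint union $\bigcup_{k\geq 0}\mathcal{P}_k(i,j)$ graded by length, so the doubly indexed sum collapses into a single sum over $\mathcal{P}(i,j)$, which is exactly the claimed formula.

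The only delicate point is convergence and the regrouping of the series. For $|\sigma|$ large the Neumann series converges absolutely, which legitimises both the termwise extraction of the $(j,i)$ entry and the rearrangement of the doubly indexed sum into the single sum over $\mathcal{P}(i,j)$; note that this sum is genuinely infinite whenever the influence graph contains a cycle, since then arbitrarily long walks contribute. Having established the identity on the region where $|\sigma|$ exceeds the spectral radius, I would extend it to every $\sigma$ for which $\sigma - A$ is invertible by analytic continuation, observing that $T^\text{full}_{ji}$ is a rational function of $\sigma$ whose poles lie only at the eigenvalues of $A$. I expect this convergence and continuation bookkeeping to be the main, and essentially only, obstacle: the algebraic heart of the statement is an immediate consequence of Lemma~\ref{lemma:Ak} once the resolvent has been expanded.
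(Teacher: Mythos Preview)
Your proposal is correct and follows essentially the same route as the paper: expand the resolvent via the Neumann series, invoke Lemma~\ref{lemma:Ak} to identify $[A^k]_{ji}$ with $F(\mathcal{P}_k(i,j))$, and then merge the double sum over $k$ and $\mathcal{P}_k(i,j)$ into a single sum over $\mathcal{P}(i,j)$. If anything, your treatment is more careful than the paper's, which does not mention convergence or analytic continuation at all.
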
	
	%
	% Proof
	%
	\begin{proof}
		We use the Neumann-series to write
		\begin{equation}
			T_{ji}^\text{full}(\sigma) = \left[ (\sigma-A)^{-1} \right]_{ji} =\frac{1}{\sigma}
			 \left[\left(1-\frac{A}{\sigma}
			\right)^{-1}\right]_{ji} 
			= \frac{1}{\sigma}
			\sum_{k=0}^\infty \frac{ \left[A^k\right]_{ji}}{\sigma^k} \, .
		\end{equation}
		where the brackets just indicate that we first take the matrix power and then take the $ji$-th element. 
		With lemma \ref{lemma:Ak} we get
		\begin{equation}
			T^\text{full}_{ji}(\sigma) = \frac{1}{\sigma} \sum_{k=0}^\infty \sum_{\pi\in\mathcal{P}_k(i,j)} 
			\frac{F(\pi)}{\sigma^k} \, .
		\end{equation}
		We now see that in $\sigma^{-k}$ we always find that $k$ is the length of the path, $k=\text{len}\, \pi$. 
		Furthermore we can combine the two sums to one sum over all paths
		\begin{equation}
			T^\text{full}_{ji}(\sigma) = \frac{1}{\sigma}  \sum_{\pi\in\mathcal{P}(i,j)} 
			\frac{F(\pi)}{\sigma^k} \, .
		\end{equation}
	\end{proof}
\subsection{Transposed Gammoids}
	For a linear dynamic system
	\begin{equation}
	\begin{aligned}
		\dot{\vec{x}}(t) &= A\vec{x}(t) + B \vec{w}(t) \\
		\vec{x}(0) &= \vec{x}_0 \\
		\vec{y}(t) &= C\vec{x}(t)	
	\end{aligned}
	\end{equation}
	the system
	\begin{equation}
	\begin{aligned}
		\dot{\vec{x}}(t) &= A^T\vec{x}(t) + C^T \vec{y}(t) \\
		\vec{x}(0) &= \vec{x}_0 \\
		\vec{w}(t) &= B^T \vec{x}(t)	 
	\end{aligned}
	\end{equation}
	is called \textit{dual} in the literature, referring to the duality principle of optimization theory, see 
	for instance \cite{lunze_einfuhrung_2016}. 
	To avoid confusion, we will use the term \emph{transposed system} instead, which then leads to a \emph{transposed gammoid}. 
	This nomenclature helps avoiding confusion with the term \textit{dual gammoid}, which is already occupied by the
	duality principle of matroid theory \cite{whitney_abstract_1935} and which, to the best of our knowledge,
	is not related to dual dynamic systems.
	
	From a gammoid $\Gamma$ one can easily switch to the transposed gammoid $\Gamma'$ without detour over the 
	transposed dynamic system. Let $g$ and $g'$ denote the influence graphs of the original and the transposed 
	system, respectively. Both systems have differential equations for the variables $\vec{x}=(x_1,\ldots,
	x_N)^T$, so both influence graphs have $N$ nodes. To avoid confusion, say 
	\begin{equation}
		\mathcal{N}=\{1,\ldots,N\}
	\end{equation}		
	and the nodes of the transposed graph are indicated by a prime symbol 
	\begin{equation}
		\mathcal{N}' = \{1',\ldots,N' \} \, . 
	\end{equation}
	We also have a one-to-one correspondence between the edges $\mathcal{E}$ and $\mathcal{E}'$ which can be 
	written as
	\begin{equation}
		(i\to j)' = (j' \to i')
	\end{equation}		
	which is the gammoid analogue to $A_{ji}=(A^T)_{ij}$. This shows, that to switch from $g$ to $g'$, one 
	basically has to flip the edges. Finally, one will realize, that inputs and outputs swap their roles. So 
	if $S=(s_1,\ldots,s_M)$ is an input set in $g$, then $S'=(s_1',\ldots,s_M')$ is an output set of $g'$. 
	In the same way, an output set $Z$ in $g$ becomes an input set $Z'$ in $g'$. We find that
	\begin{equation}
		(\mathcal{L},g,\mathcal{M})' = (\mathcal{M}',g',\mathcal{L}')
	\end{equation}
	directly maps the gammoid $\Gamma$ to the transposed gammoid $\Gamma'$. We have just derived the construction 
	of the transposed gammoid for linear system. Clearly, we can use the derived formula to define the 
	transposed gammoid also for the general case.
	%
	% Defintion
	%
	\begin{definition}
		Let $\Gamma=(\mathcal{L},g,\mathcal{M})$ be a gammoid, we call 
		\begin{equation}
			\Gamma' := (\mathcal{M}',g',\mathcal{L}')
		\end{equation}
		the transposed gammoid.
	\end{definition}
	If the gammoid is a weighted gammoid, one simply {keeps} the weight of each single edge, 
	\begin{equation}
		F( i\to j ) = F( (i \to j)' ) \, .
	\end{equation}
	Now any path 
	\begin{equation}
		\pi = (n_0\to \ldots \to n_k)
	\end{equation}
	in $\Gamma$ corresponds to a path
	\begin{equation}
		\pi' = (n_k' to \ldots  n_0')
	\end{equation}
	in $\Gamma'$ and
	\begin{equation}
		F(\pi) = F(\pi') \, .
	\end{equation}

\subsection{Concatenation of Gammoids}
	Consider two gammoids $\Gamma_1=(\mathcal{L}_1,g_1,\mathcal{M}_1)$ and 
	${\Gamma}_2=({\mathcal{L}}_2,{g_2},{\mathcal{M}_2})$. If we now think of a signal that flows through 
	a gammoid from the inputs to the outputs we can define the 
	\emph{concatenation}
	\begin{equation}
		\Gamma := \Gamma_1 \circ \Gamma_2 \, .
	\end{equation}		
	A signal enters somewhere in the input ground set 
	$\mathcal{L}_1$ and flows through $\Gamma_1$ to the output ground set $\mathcal{M}_1$. From there, the 
	signal is passed over to $\mathcal{L}_2$ and flows through $\Gamma_2$ to the final output ground set 
	$\mathcal{M}_2$. To get a well defined concatenation, one must assume, that the signal transfer between the 
	gammoids, i.e., from $\mathcal{M}_1$ to $\mathcal{L}_2$ is well defined. To ensure this, we assume, that 
	$\mathcal{M}_1=\{m_1,\ldots, m_P\}$ and $\mathcal{L}_2=\{l_1,\ldots, l_P\}$ have the same size and are 
	present in a fixed ordering such that the signal is passed from $m_i$ to $l_i$. In other words, we identify 
	\begin{equation}
		m_i = l_i \quad \text{for} \quad i=1,\ldots ,P \, .
	\end{equation}
	The result $\Gamma=(\mathcal{L},g,\mathcal{M})$ is indeed again a gammoid with input ground set 
	$\mathcal{L}=\mathcal{L}_1$ and output ground set $\mathcal{M}=\mathcal{M}_2$. The graph $g$ of this 
	gammoid is simply the union of $g_1$ and $g_2$, i.e., 
	$g=(\mathcal{N}_1\cup \mathcal{N}_2, \mathcal{E}_1\cup \mathcal{E}_2)$.
	%
	% Proposition
	%
	\begin{proposition}
		Let $\Gamma_1=(\mathcal{L}_1,g_1,\mathcal{M}_1)$ and ${\Gamma}_2=({\mathcal{L}}_2,{g_2},{\mathcal{M}_2})$ 
		be two gammoids, $\Gamma=(\mathcal{L},g,\mathcal{M})$ the result of the concatenation $\Gamma_1\circ 
		\Gamma_2$.
		For any $S\subseteq \mathcal{L}$ and $K\subseteq \mathcal{M}$ we find: $S$ is linked in $g$ to $K$ if 
		and only if there is a 
		$Z\subseteq \mathcal{M}_1=\mathcal{L}_2$ such that $S$ is linked in $g_1$ to $Z$ and $Z$ is linked in 
		$g_2$ to $K$.
	\end{proposition}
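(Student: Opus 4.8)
The plan is to prove the two implications separately, in both cases by surgery on the families of node-disjoint paths that witness linking, exploiting that every node of $g$ lies in $\mathcal{N}_1$ or $\mathcal{N}_2$ and that the two parts share only the junction set $J := \mathcal{M}_1 = \mathcal{L}_2$ (under the identification $m_i = l_i$), so that every edge of $g$ lies entirely in $g_1$ or entirely in $g_2$. Throughout write $\lvert S\rvert = \lvert K\rvert =: M$.

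For the implication ``$\Leftarrow$'' I would start from a set $Z \subseteq J$ together with a family $\Pi_1$ of $M$ node-disjoint paths realising $S$ linked in $g_1$ to $Z$ and a family $\Pi_2$ realising $Z$ linked in $g_2$ to $K$; note $\lvert Z\rvert = M$. Since $\mathrm{ter}(\Pi_1) = Z = \mathrm{ini}(\Pi_2)$, each $z \in Z$ is the terminal node of exactly one $\alpha_z \in \Pi_1$ and the initial node of exactly one $\beta_z \in \Pi_2$, and I would concatenate $\alpha_z$ with $\beta_z$ at $z$ to obtain $M$ paths from $S$ to $K$ in $g$. The remaining work is node-disjointness: the $\alpha$'s live in $\mathcal{N}_1$ and are mutually disjoint, the $\beta$'s live in $\mathcal{N}_2$ and are mutually disjoint, so collisions can only occur at nodes of $J$, and I would rule these out by first replacing each family by a length-minimal one, forcing each $g_1$-path to meet $J$ only in its terminal node and each $g_2$-path to meet $J$ only in its initial node.

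For ``$\Rightarrow$'' I would take a family $\Pi = \{\pi_1,\dots,\pi_M\}$ witnessing $S$ linked in $g$ to $K$. Because $S \subseteq \mathcal{N}_1$, $K \subseteq \mathcal{N}_2$, and the only shared vertices are those of $J$, every $\pi_i$ must pass through at least one node of $J$; selecting one such node $z_i$ on each $\pi_i$ yields $M$ distinct junction nodes $Z = \{z_1,\dots,z_M\}$ (distinct by disjointness of $\Pi$). Splitting $\pi_i$ at $z_i$ gives a prefix from $s_i$ to $z_i$ and a suffix from $z_i$ to $k_i$; the prefixes form a node-disjoint family with initial set $S$ and terminal set $Z$, the suffixes one with initial set $Z$ and terminal set $K$, which is exactly what is required once each prefix lies in $g_1$ and each suffix in $g_2$.

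The main obstacle is precisely this last clause: a single $\pi_i$ may cross $J$ several times, alternating between $\mathcal{N}_1$ and $\mathcal{N}_2$, so there need not be any split node whose prefix stays in $\mathcal{N}_1$ while its suffix stays in $\mathcal{N}_2$. I would resolve this with a flow/Menger argument: $J$ is itself an $S$-$K$ vertex separator in $g$, so the minimum separator has size $\ge M$; I would argue that a minimum separator can be taken inside $J$ and use it as $Z$, then recover the two sub-linkages from the restriction of a maximum $S$-$K$ flow to $g_1$ and to $g_2$, taking care to cancel backward crossings of $J$ so that the region-one and region-two flows decompose into genuine $g_1$- and $g_2$-paths. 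This elimination of multiple and backward crossings is the delicate point, and it is exactly where a structural hypothesis on the junction (for instance, that nodes of $\mathcal{M}_1$ carry no outgoing $g_1$-edge and nodes of $\mathcal{L}_2$ no incoming $g_2$-edge, as holds in the transfer-function construction $(\Gamma\circ\Gamma')$) would force each path to cross $J$ exactly once and make the canonical split work without any flow machinery.
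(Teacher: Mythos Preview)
Your overall strategy---splitting or gluing the path families at the junction $J=\mathcal{M}_1=\mathcal{L}_2$---is exactly the paper's approach. The paper's proof picks, for each $\pi_i$, one node $z_i\in\mathcal{M}_1$ on the path, splits $\pi_i=\pi_i^1\circ\pi_i^2$ there, and declares that $\Pi^1$ links $S$ to $Z$ in $g_1$ and $\Pi^2$ links $Z$ to $K$ in $g_2$; for the converse it concatenates and asserts disjointness ``since $\pi_i^1$ only contains nodes from $\mathcal{N}_1$ and $\pi_i^2$ from $\mathcal{N}_2$.'' In other words, the paper silently treats the junction as being crossed exactly once by each path---precisely the hypothesis you flagged at the end of your proposal. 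Your analysis is therefore more scrupulous than the paper's own argument: you correctly spotted that, without extra structure, a path in $g$ can oscillate across $J$, and that a $g_1$-path and a $g_2$-path can collide at a node of $J\setminus Z$.

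That said, one of your proposed repairs has a gap. For the ``$\Leftarrow$'' direction you suggest replacing $\Pi_1,\Pi_2$ by length-minimal families so that each $g_1$-path meets $J$ only at its terminal node. This does not follow: nodes of $J=\mathcal{M}_1$ are ordinary nodes of $g_1$, and a shortest $s$--$z$ path in $g_1$ may perfectly well pass through some $j\in J\setminus Z$ (there is nothing forcing $J$-nodes to be terminal in $g_1$). So length-minimality alone does not rule out collisions at $J$. Your Menger/flow idea for ``$\Rightarrow$'' is in the right spirit but, as you acknowledge, is only a sketch; cancelling backward crossings so that the two restricted flows decompose into genuine $g_1$- and $g_2$-paths is exactly the nontrivial step, and you would need to carry it out.

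Your closing remark is the real point: under the structural hypothesis that $\mathcal{M}_1$-nodes have no outgoing $g_1$-edges and $\mathcal{L}_2$-nodes have no incoming $g_2$-edges (as holds in the paper's only use case $\Gamma\circ\Gamma'$), every $g$-path from $\mathcal{L}_1$ to $\mathcal{M}_2$ crosses $J$ exactly once, and then the naive split/concatenate works without any further machinery. The paper's proof should be read with that tacit assumption; as a fully general statement about arbitrary concatenations of gammoids it would need the additional argument you outline.
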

	%
	% Proof
	%
	\begin{proof}
		First assume, that $S\subseteq \mathcal{L}$ is linked in $g$ to $K\subseteq \mathcal{M}$ that is, there 
		is a family of node-disjoint paths $\Pi = \{\pi_1,\ldots , \pi_M \}$ from $S$ to $\mathcal{M}$, where 
		$M=\text{card}\, S=\text{card}\,K$.	By construction, for any edge $i\to j$ with $i\in\mathcal{N}_1$ 
		and $j\in\mathcal{N}_2$ we necessarily find that either $i \in \mathcal{M}_1$ or $j\in\mathcal{M}_1$. 
		Since $S\subseteq\mathcal{L}_1\subseteq \mathcal{N}_1$ and $K\subseteq \mathcal{M}_2\subseteq 
		\mathcal{N}_2$ it is clear, that each path $\pi_i$ has at least one node $z_i\in\mathcal{M}_1$ and these 
		nodes are pairwise distinct. We find that $Z:=\{z_1,\ldots, z_M\}$ acts as a separator. We can decompose 
		each path $\pi_i= \pi_i^1 \circ \pi_i^2$ at $z_i$ such that $\pi_i^1$ starts in $S$ and terminates at 
		$z_i$ and $\pi_i^2$ starts at $z_i$ and terminates in $K$. We find that 
		$\Pi^1=\{\pi_1^1,\ldots,\pi_M^1\}$ is a family of 
		node-disjoint paths such that $S$ is linked in $g_1$ to $Z$. Analogously through $\Pi^2$ we see that 
		$Z$ is linked in $g_2$ to $K$.
		
		Now assume $S$ is linked in $g_1$ to $Z$ and $Z$ is linked in $g_2$ to $K$. 
		Let $Z=(z_1,\ldots,z_M)$ where 
		again $M$ is the cardinality of $S$ and $K$. That means we find a set 
		$\Pi^1=\{\pi_1^1,\ldots , \pi_M^1\}$ of node-disjoint paths such that $\pi_i^1$ starts in $S$ and 
		terminates at $z_i$ and we find a family of node-disjoint paths 
		$\Pi^2=\{\pi_1^2,\ldots, \pi^2_{M}\}$ such 
		that $\pi_i^2$ starts at $z_i$ and terminates in $K$. We can now concatenate the paths $\pi_i:= 
		\pi_i^1\circ \pi_i^2$. Since $\pi_i^1$ only contains nodes from $\mathcal{N}_1$ and $\pi_i^2$ from 
		$\mathcal{N}_2$, it is clear, that the paths from $\Pi:=\{\pi_1 , \ldots, \pi_M\}$ are again 
		node-disjoint. Hence $S$ is linked in $g$ to $K$.
	\end{proof}		
	A special case is the concatenation of a gammoid with its own transpose, $\Gamma \circ \Gamma'$. Due to the 
	symmetry between paths in $\Gamma$ and $\Gamma'$ explained before, we find that an input set $S$ 
	is independent in $\Gamma \circ \Gamma'$ if and only if it is independent in $\Gamma$.
	
\subsection{Gramian Matrix}
	\begin{definition}
		Let $T:\mathbb{C}\to \mathcal{C}^{P \times M}$ be the transfer function of a linear system. 
		The \emph{input gramian} of the system is defined as
		\begin{equation}
			G(\sigma) := T^*(\sigma) T(\sigma) \quad s\in \mathbb{C}
		\end{equation}
		where the asterisk denotes hermitian conjugate.
	\end{definition}
	Let $S=\{s_1,\ldots,s_M \}$ be the input set and $Z=\{z_1,\ldots, z_P\}$ be the output set that belongs 
	to the transfer function $T$.
	Making use of proposition \ref{prop:Transferfunction} we can compute the input gramian via
	\begin{equation}
		G_{ji}(\sigma) = \sum_{k=1}^P  \frac{1}{\vert \sigma\vert^2}
		\sum_{\rho\in\mathcal{P}(s_i,z_k)} \frac{F(\rho)}{{\sigma}^{\text{len}\,\rho}}
		\sum_{\pi\in\mathcal{P}(s_j,z_k)} \frac{F(\pi)}{\bar{\sigma}^{\text{len}\,\pi}} \, .
	\end{equation}
	We already know that if there is a path 
	$\pi$ in $\Gamma$ that goes from $s_j$ to $z_k$, then there is a path $\pi'$ in $\Gamma'$ that goes from 
	$z_k'$ to $s_j'$.
	So if 
	\begin{equation}
		\rho = (n_0\to\ldots \to n_{l-1}\to n_l)
	\end{equation}		
	is a path in $\Gamma$ from $n_0=s_i$ to $n_l = z_k$, and if $\pi'$ 
	\begin{equation}
		\pi' = (p'_0 \to \ldots  \to p'_r)
	\end{equation}		
	is a path in $\Gamma'$ from 
	$p'_0 = z_k'$ to $p'_r = s_j'$, then with respect to the identification $z_k=z'_k$ we can interpret 
	\begin{equation}
		\rho \circ \pi' = (n_0\to\ldots \to n_{l-1}\to p'_0 \to \ldots \to p'_r)
	\end{equation}		
	as a path in $\Gamma\circ \Gamma'$ with
	\begin{equation}
		F(\rho \circ \pi') = F(\rho) F(\pi') = F(\rho)F(\pi) \, .
	\end{equation}
	We can introduce a multi-index notation 
	\begin{equation}
		\sigma^\psi := \sigma^{\text{len}\,\rho} \bar{\sigma}^{\text{len}\,\pi}
	\end{equation}		
	because we know, that any path $\psi$ in $\Gamma \circ \Gamma'$ has always a unique decomposition in such a 
	$\rho$ and $\pi'$. We end up with the formula
	\begin{equation} \label{eq:GramianShort}
		G_{ji}(\sigma) = \frac{1}{\vert s\vert^2} \sum_{\psi \in \mathcal{P}(s_i,s_j')} 
		\frac{F(\psi)}{\sigma^\psi}
	\end{equation}
	Before we turn our interest to the meaning of the gramian for dynamic compressed sensing, we want to 
	provide tools in the form of the following lemma and proposition.
	%
	% Lemma
	%
	\begin{lemma} \label{lemma:shortpaths}
		Let $\Gamma=(\mathcal{L},g,\mathcal{M})$ be a gammoid and let $a,b \in\mathcal{L}$ be two input nodes. In 
		$\Gamma \circ \Gamma'$ let $\eta_{aa'}$ denote the shortest path from $a$ to $a'$, $\eta_{bb'}$ the 
		shortest path from $b$ to $b'$ and $\eta_{ab'}$ shall denote the shortest path from $a$ to $b'$. 
		Then
		\begin{equation}
			\frac{\text{len}\, (\eta_{aa'}) + \text{len}\, (\eta_{bb'}) }{2} \leq \text{len}\, (\eta_{ab'}) \, .
		\end{equation}
	\end{lemma}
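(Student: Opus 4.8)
The plan is to exploit the explicit layered structure of $\Gamma \circ \Gamma'$. By the construction of the concatenation, the graph of $\Gamma \circ \Gamma'$ is the union of the unprimed graph $g$ and the primed graph $g'$, glued only along the identifications $z = z'$ for each output node $z \in \mathcal{M}$. Since every edge of $g$ joins two unprimed nodes and every edge of $g'$ joins two primed nodes, any path from an input node $a \in \mathcal{L}$ to an output node $b' \in \mathcal{L}'$ must leave the unprimed layer and enter the primed layer exactly once, at some crossing node $z \in \mathcal{M}$. Thus every such path decomposes uniquely as $\psi = \rho \circ \pi'$, where $\rho$ runs in $g$ from $a$ to $z$ and $\pi'$ runs in $g'$ from $z'$ to $b'$, and the two halves share no node other than $z$. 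I would first make this decomposition precise, noting that a shortest (hence simple) path crosses the identified layer exactly once.

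Next I would convert path lengths into shortest-path distances in the original graph. Write $d(x,z)$ for the length of a shortest directed path from $x$ to $z$ in $g$. The edge-flipping correspondence $(u \to v) \mapsto (v' \to u')$ between $g$ and $g'$ preserves length, so a path $z' \to \cdots \to b'$ in $g'$ mirrors a path $b \to \cdots \to z$ in $g$ of equal length; consequently the shortest primed half from $z'$ to $b'$ has length exactly $d(b,z)$. Because the unprimed and primed halves occupy disjoint node sets (apart from $z$), they can be shortened independently, so a shortest path crossing at a fixed $z$ has length $d(a,z) + d(b,z)$. Minimizing over the crossing node gives
\begin{equation}
\begin{aligned}
	\text{len}(\eta_{aa'}) &= 2 \min_{z \in \mathcal{M}} d(a,z), \\
	\text{len}(\eta_{bb'}) &= 2 \min_{z \in \mathcal{M}} d(b,z), \\
	\text{len}(\eta_{ab'}) &= \min_{z \in \mathcal{M}} \bigl[ d(a,z) + d(b,z) \bigr].
\end{aligned}
\end{equation}
Here the minima range over output nodes reachable from the relevant inputs; whenever $\eta_{ab'}$ exists, its optimal crossing node witnesses the existence and finiteness of the other two quantities, so no reachability obstruction arises.

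With these formulas the statement reduces to the superadditivity of the minimum: setting $\alpha_z := d(a,z)$ and $\beta_z := d(b,z)$, one has $\min_z \alpha_z + \min_z \beta_z \le \min_z (\alpha_z + \beta_z)$, since for every fixed $z$ the summand $\alpha_z + \beta_z$ dominates $\min_w \alpha_w + \min_w \beta_w$. Dividing the identity $\text{len}(\eta_{aa'}) + \text{len}(\eta_{bb'}) = 2\bigl(\min_z \alpha_z + \min_z \beta_z\bigr)$ by two and comparing with $\text{len}(\eta_{ab'}) = \min_z(\alpha_z + \beta_z)$ yields the claimed inequality. I expect the only genuine work to be the structural step --- pinning down that a shortest path crosses the identified layer exactly once and that the forward/reverse length correspondence between $g$ and $g'$ is exact --- while the concluding inequality is elementary; the degenerate case $a = b$ needs no separate treatment, since it gives equality.
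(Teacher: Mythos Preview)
Your proposal is correct and rests on the same structural insight as the paper: decompose a path in $\Gamma \circ \Gamma'$ at its unique crossing node $z \in \mathcal{M}$, and use the length-preserving reflection between $g$ and $g'$. The paper's proof is slightly more economical: rather than establishing the exact identities $\text{len}(\eta_{aa'}) = 2\min_z d(a,z)$ etc.\ and then invoking superadditivity of the minimum, it simply takes the decomposition $\eta_{ab'} = \alpha \circ \beta'$ at the specific crossing node of the given shortest path, observes that $\alpha \circ \alpha'$ is \emph{some} path from $a$ to $a'$ (so $\text{len}(\eta_{aa'}) \le 2\,\text{len}(\alpha)$), likewise $\text{len}(\eta_{bb'}) \le 2\,\text{len}(\beta)$, and adds. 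Your route computes both directions of the minimization identity where only one inequality is needed, but the underlying idea is identical.
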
 
	%
	% Proof
	%
	\begin{proof}
		By construction we know that there is a $z\in \mathcal{M}$ and a decomposition 
		$\eta_{ab'}=\alpha \circ \beta'$ such that $\alpha$ goes 
		from $a$ to a $z$ and $\beta$ goes from $b$ to $z$. We now find that $\alpha \circ \alpha'$ goes 
		from $a$ to $a'$. By assumption of the lemma, $\alpha \circ \alpha'$ is not shorter than $\eta_{aa'}$, 
		thus
		\begin{equation}
			\text{len}\, (\eta_{aa'}) \leq \text{len}\,(\alpha \circ \alpha') = 2\, \text{len}\, (\alpha) \, .
		\end{equation}
		Analogously we find
		\begin{equation}
			\text{len} \, ( \eta_{bb'} )\leq 2 \,\text{len}\, (\beta) \, .
		\end{equation}
		We can now add these two inequalities together to get
		\begin{equation}
			  \text{len}\, (\eta_{aa'}) + \text{len}\, (\eta_{bb'} ) \leq 
			 2 (\text{len} \, (\alpha) + \text{len}\, (\beta ) )  \, .
		\end{equation}
		On the right hand side we identify the length of $\eta_{ab'}$ to get
		\begin{equation}
			\text{len}\, (\eta_{aa'} )+ \text{len}\, (\eta_{bb'}) \leq 2 \, \text{len} \,(\eta_{ab'})  \, .
		\end{equation}
	\end{proof}
	%
	%
	% Proposition
	%
	\begin{proposition}\label{prop:short}
		Let $T$ be the transfer function of a dynamic system with input set $S=\{s_1,\ldots , s_M\}$ and 
		gammoid $\Gamma$, and let 
		$G=T^* T$ be the input gramian. Consider the quantity
		\begin{equation}
			\mu_{ij}(\sigma) := \frac{\vert G_{ij}(\sigma) \vert}{ \sqrt{ G_{ii}(\sigma)G_{jj}(\sigma)} } \, .
		\end{equation}
		Let $\eta_{ij'}$ be the shortest path in $\Gamma \circ \Gamma$ from $s_i$ to $s_j'$.
		If lemma \ref{lemma:shortpaths} holds with equality, then 
		\begin{equation}
			\lim_{\vert \sigma \vert \to \infty} \mu_{ij}(\sigma) = \frac{\vert F(\eta_{ij'}) 
			\vert}{\sqrt{F(\eta_{ii'})F(\eta_{jj'})}} \, .
		\end{equation}
		If the lemma holds with strict inequality, then
		\begin{equation}
			\lim_{\vert \sigma \vert \to \infty} \mu_{ij}(\sigma) = 0 .
		\end{equation}
		Note, that there can be several shortest paths, so $\eta_{ii'}$ can be a set of paths.
	\end{proposition}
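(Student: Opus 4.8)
The plan is to extract the $|\sigma|\to\infty$ asymptotics of each Gramian entry from the path-sum representation \eqref{eq:GramianShort} and then compare the leading powers of $\sigma$ in the numerator and denominator of $\mu_{ij}$. First I would note that for $|\sigma|$ larger than the spectral radius of $A$ the Neumann series underlying Proposition~\ref{prop:Transferfunction} converges, so that
\begin{equation}
	G_{ij}(\sigma) = \frac{1}{|\sigma|^2} \sum_{\psi \in \mathcal{P}(s_i, s_j')} \frac{F(\psi)}{\sigma^\psi}
\end{equation}
may be reordered by path length. Since there are only finitely many paths of each length, the shortest paths dominate as $|\sigma|\to\infty$: writing $d_{ij} := \text{len}(\eta_{ij'})$ for the length of the shortest path(s) from $s_i$ to $s_j'$ in $\Gamma\circ\Gamma'$, any longer path carries an extra factor $|\sigma|^{-1}$, so the leading contribution is $|\sigma|^{-2-d_{ij}}$ times the weight collected from the shortest paths.

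Next I would evaluate the leading coefficients, which is exactly where the multi-index $\sigma^\psi = \sigma^{\text{len}\,\rho}\,\bar{\sigma}^{\text{len}\,\pi}$ enters. On the diagonal, each shortest $s_i\to s_i'$ path has the symmetric form $\alpha\circ\beta'$ with $\alpha,\beta$ shortest paths from $s_i$ to a common nearest output node, so $\text{len}\,\rho = \text{len}\,\pi = d_{ii}/2$ and $\sigma^\psi = |\sigma|^{d_{ii}}$ is real and positive. Summing the products $F(\alpha)F(\beta)$ over all such paths yields $F(\eta_{ii'})$, a sum of squares of shortest-path weights, whence $G_{ii}(\sigma)\sim F(\eta_{ii'})\,|\sigma|^{-2-d_{ii}}$ with a positive leading coefficient. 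Off the diagonal the two halves of a shortest path need not be of equal length, so $\sigma^\psi$ carries a phase; evaluating the limit along the positive real axis (so that $\bar\sigma=\sigma$) removes it and gives $G_{ij}(\sigma)\sim F(\eta_{ij'})\,\sigma^{-2-d_{ij}}$.

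Finally I would assemble the ratio. Substituting these asymptotics,
\begin{equation}
	\mu_{ij}(\sigma) \sim \frac{|F(\eta_{ij'})|}{\sqrt{F(\eta_{ii'})\,F(\eta_{jj'})}}\; \sigma^{\frac{1}{2}(d_{ii}+d_{jj}) - d_{ij}},
\end{equation}
so the surviving power of $\sigma$ has exponent $\tfrac{1}{2}(d_{ii}+d_{jj})-d_{ij}$. Lemma~\ref{lemma:shortpaths} states precisely that this exponent is $\le 0$. In the equality case it vanishes, the powers cancel, and the limit is the finite value $|F(\eta_{ij'})|/\sqrt{F(\eta_{ii'})F(\eta_{jj'})}$; in the strict case the exponent is negative and $\mu_{ij}(\sigma)\to 0$, as claimed.

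The step I expect to require the most care is the bookkeeping of $\sigma^\psi$: one must justify that $\sigma^{\text{len}\,\rho}\bar\sigma^{\text{len}\,\pi}$ collapses to $|\sigma|^{\text{len}\,\psi}$ on the diagonal but must be tamed off the diagonal (here by taking $\sigma$ real), and one must confirm that the diagonal leading coefficient $F(\eta_{ii'})$ is strictly positive so that the normalizing square root is well defined. A secondary subtlety is excluding accidental cancellations among shortest-path weights that would annihilate a leading coefficient and force the expansion to a higher order.
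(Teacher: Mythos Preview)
Your approach is essentially the same as the paper's: expand each $G_{ij}$ via the path-sum formula \eqref{eq:GramianShort}, isolate the shortest-path terms as dominant when $|\sigma|\to\infty$, exploit the symmetry of diagonal paths to reduce $\sigma^{-\eta_{ii'}}$ to $|\sigma|^{-\text{len}\,\eta_{ii'}}$, and then read off the surviving exponent $\tfrac12(d_{ii}+d_{jj})-d_{ij}$, whose sign is controlled by Lemma~\ref{lemma:shortpaths}. The paper also pauses to verify that the diagonal sums are nonnegative (by grouping the four terms $\alpha\circ\beta'$, $\beta\circ\alpha'$, $\alpha\circ\alpha'$, $\beta\circ\beta'$), which is the positivity check you flag at the end.

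One unnecessary detour in your write-up: you restrict to real $\sigma$ to kill the off-diagonal phase in $\sigma^\psi$, which technically only establishes the limit along the positive real axis. The paper avoids this by using the absolute value already present in the definition of $\mu_{ij}$: since $|\sigma^\psi|=|\sigma|^{\text{len}\,\psi}$ for any complex $\sigma$, one has $\bigl|F(\eta_{ij'})\,\sigma^{-\eta_{ij'}}\bigr|=|F(\eta_{ij'})|\,|\sigma|^{-d_{ij}}$ directly, and the limit holds for $|\sigma|\to\infty$ in any direction.
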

	%
	% Proof
	%
	\begin{proof}
		Say $S=\{s_1,\ldots,s_M\}$ is the input set that leads to the transfer function $T$ and the input 
		gramian $G$. Using equation \eqref{eq:GramianShort} we can write
		\begin{equation} \label{eq:G1}
			\mu_{ij}(\sigma)=\frac{
			\left\vert
			\sum_{\psi\in\mathcal{P}(s_i,s_j')} F(\psi) \sigma^{-\psi} 
			\right\vert
			}{
			\sqrt{			
				\sum_{\pi\in\mathcal{P}(s_i,s_i')} F(\pi)\sigma^{-\pi}
				\sum_{\theta\in\mathcal{P}(s_j,s_j')} F(\theta)\sigma^{-\theta}
			}} \, .
		\end{equation}	
		The terms under the 
		square-root are non-negative. To see that, let $\pi=\alpha \circ \beta'$ be a path from $s_i$ to $s_i'$.
		Then we know, that also $\beta \circ \alpha'$, $\alpha \circ \alpha'$ and $\beta \circ \beta'$ exist and 
		all go from $s_i$ to $s_i'$. Thus, in $G_{ii}$ we always find the four terms
		\begin{equation}
			R:=\frac{F(\alpha \circ \beta')}{\sigma^{\text{len}\,\alpha}\bar{\sigma}^{\text{len}\,\beta}} +
			\frac{F(\beta \circ \alpha')}{\sigma^{\text{len}\,\beta}\bar{\sigma}^{\text{len}\,\alpha}} +
			\frac{F(\alpha \circ \alpha')}{\sigma^{\text{len}\,\alpha}\bar{\sigma}^{\text{len}\,\alpha}} +
			\frac{F(\beta \circ \beta')}{\sigma^{\text{len}\,\beta}\bar{\sigma}^{\text{len}\,\beta}}
		\end{equation}
		together. So it is sufficient to show, that $R$ is non-negative. With 
		$A:= F(\alpha)/\sigma^{\text{len}\,\alpha}$ and $B:= F(\beta)/\sigma^{\text{len}\,\beta}$
		we can rewrite $R$ as
		\begin{equation}
			R =A \bar{B} + B \bar{A} + A \bar{A} + B \bar{B}.
		\end{equation}
		With $A=x+\text{i} y$ and $B=u+\text{i}v$ we find
		\begin{equation}
			R = (x+u)^2(y+v)^2	 \geq 0 \, .	
		\end{equation}
		The same holds for $G_{jj}$. 
	
		We now proceed with \eqref{eq:G1}. As we want to take the limit $\vert \sigma \vert \to \infty$, 
		the smallest 
		powers of $\sigma$ will be dominant. The smallest powers of $\sigma$ correspond to the shortest 
		paths. We neglect higher orders and the asymptotic behaviour 
		\begin{equation}
			\mu_{ij}(\sigma) \simeq \frac{ \left\vert F(\eta_{ij'}) \sigma^{-\eta_{ij'}}  \right\vert}{\sqrt{
							F(\eta_{ii'})F(\eta_{jj'}) \sigma^{-(\eta_{ii'}+\eta_{jj'})}
			}}
		\end{equation}			
		where we use the sign ``$\simeq$'' to denote the asymptotic behaviour for 
		$\vert \sigma \vert \to\infty$. Since the path $\eta_{ii'}$ is always symmetric in the sense 
		$(\eta_{ii'})'=\eta_{ii'}$ we find
		$\sigma^{-\eta_{ii'}}=\vert \sigma \vert^{\text{len}\, (\eta_{ii'})}$. The same holds for 
		$\eta_{jj'}$. With this we 	
		get 
		\begin{equation}
			\mu_{ij}(s) \simeq  \frac{\vert F(\eta_{ij'})\vert }{\sqrt{F(\eta_{ii'})F(\eta_{jj'})}}
			\vert \sigma \vert^{\frac12 \left( \text{len}\, (\eta_{ii'}) + \text{len}\, (\eta_{jj'})\right) 
			- \text{len}\,(\eta_{ij'}) }
			\, .
		\end{equation}
		From lemma \ref{lemma:shortpaths} we know, that the exponent of $\vert s \vert$ is always non-negative, 
		thus the limit always exists. If the lemma holds with equality, then the exponent equals zero and we 
		get
		\begin{equation}
			\mu_{ij}(\sigma) \simeq  \frac{\vert F(\eta_{ij'})\vert }{\sqrt{F(\eta_{ii'})F(\eta_{jj'})}} \, .
		\end{equation}
		If inequality holds, then the exponent of $\vert \sigma \vert$ is negative and we find
		\begin{equation}
			\lim_{\vert \sigma \vert \to \infty}\mu_{ij}(\sigma) = 0 \, .
		\end{equation}				 
	\end{proof}

\section{Spark - Mutual Coherence Inequality}
	As a final result we will show that the spark-mutual coherence inequality that has already been presented for 
	the static problem in \cite{donoho_optimally_2003} stays valid for the gammoid of a linear dynamic system. 
	In the proof for the static problem, one utilizes the Gershgorin circle theorem. 
	
	For the dynamic problem we can do a similar approach locally, meaning we get an inequality at each point 
	$\sigma\in\mathbb{C}$ in the complex plain. We will first show, that the it makes sense to consider the \textit{generic 
	rank} instead of the standard rank of the transfer function $T$. This will help us to finally deduce a global 
	inequality.
\subsection{Eigenvalues of the Gramian}
	Let $T:\mathbb{C}\to \mathbb{C}^{P\times M}$ be the transfer 
	function of a linear dynamic system. One knows, that $T$ has singularities at the eigenvalues of $A$ (see 
	section 6. \ref{sec:Transfer}), but these will cause no issue for the following calculations. 
	Assume for an $\sigma_0\in\mathbb{C}$ we find the rank
	\begin{equation}
		\text{rank}\, T(\sigma) = r < M \, .
	\end{equation}
	If we regard $T(\sigma)=(\vec{t}_1(\sigma),\ldots,\vec{t}_M(\sigma))$ as a set of column vectors, then
	$\{\vec{t}_1(\sigma_0),\ldots, \vec{t}_M(\sigma_0)\}$ is a linearly dependent set of vectors. 
	This can have two reasons. 
	Either, $\{\vec{t}_1,\ldots,\vec{t}_M\}$ as function are linearly dependent, say of rank $r$. 
	Then it is clear, that the rank of $T(\sigma)$ will never exceed $r$. Or, $\{\vec{t}_1,\ldots,\vec{t}_M\}$ is 
	a set of linearly independent functions, and the linear dependence at $\sigma_0$ is just an unfortunate 
	coincidence. In this case, for any $\sigma$ in a vicinity of $\sigma_0$, we will find that the rank of 
	$T(\sigma)$ is $M$ 
	almost everywhere. For this reason, one defines the \emph{structural} \cite{lunze_einfuhrung_2016} or 
	\emph{generic rank} \cite{murota_matrices_2009} of $T$ as
	\begin{equation}
		\text{Rank}\, T := \max_{\sigma\in\mathcal{C}} \text{rank}\, T(\sigma) \, .
	\end{equation}
	From linear algebra it is clear, that for a fixed $\sigma\in\mathbb{C}$, the equation
	\begin{equation}
		T(\sigma) \tilde{\vec{w}}(\sigma) = \tilde{\vec{y}}(\sigma)
	\end{equation}
	can be solved for $\tilde{w}(\sigma)$ if the rank of $T(\sigma)$ equals $M$. From the argumentation above 
	it becomes clear, that a generic rank of $M$ already renders the whole system invertible.
	
	Now, we formulate the well known Gershgorin theorem \cite{gershgorin_uber_1931} in 
	a form suitable for our purpose.
	%
	% Lemma
	%
	\begin{lemma}\label{lemma:zeroeigen}
		Consider a complex matrix $G:\mathbb{C}\to\in\mathbb{C}^{M\times M}$.
%		\begin{equation}
%			\mu_{ij}(\sigma) := \frac{\vert G_{ij}(\sigma) \vert}{\sqrt{G_{ii}(\sigma)G_{jj}(\sigma)}} \, .
%		\end{equation}		
		We call $G$ strict diagonal dominant in $\sigma$ if for all $i=1,\ldots,M$ we find
		\begin{equation}
			\vert G(\sigma)_{ii} \vert > \sum_{j\neq i} \vert G(\sigma)_{ij}\vert \, .
		\end{equation}		 
	If $G$ is strict diagonal dominant in $\sigma$, then zero is not an eigenvalue of $G(\sigma)$.
	\end{lemma}
	%
	% Proof
	%
	\begin{proof}
		For a fixed $\sigma$ set $A:=G(\sigma)$. 
		Assume $\lambda\in\mathbb{C}$ is an eigenvalue of $A$ with eigenvector $\vec{v}\in\mathbb{C}^M$. Let 
		$v_i$ 
		be a component of $\vec{v}$ of maximal magnitude, i.e. $\vert v_i \vert \geq \vert v_j \vert$ for all 
		$j=1,\ldots,M$. Without loss of generality we assume $v_i=1$. We write the eigenvalue equation in 
		components
		\begin{equation}
			\sum_{j=1}^M A_{ij}v_j = \lambda v_i \, .
		\end{equation}
		We extract $j=i$ from the sum to get
		\begin{equation}
			\sum_{j\neq i} A_{ij} v_j = \lambda - A_{ii} \, .
		\end{equation}
		Taking the absolute value and applying the triangle inequality yields
		\begin{equation}
			\vert \lambda - A_{ii} \vert \leq \sum_{j\neq i} \vert A_{ij} \vert \vert v_j \vert \, .
		\end{equation}
		By constriction $\vert v_j \vert \leq \vert v_i \vert =1$ thus we get the final result
		\begin{equation}
			\vert \lambda - A_{ii} \vert \leq \sum_{j\neq i} \vert A_{ij} \vert \, .
		\end{equation}
		With $r_i= \sum_{j\neq i} \vert A_{ij} $ we can give a boundary of the spectrum $\text{spec}\,(A)$ via a 
		union of circles in the complex plane,
		\begin{equation}
			\text{spec}\,(A) \subseteq \bigcup_{i=1}^M 
			\{ \lambda \in \mathbb{C} \, | \, \vert \lambda - A_{ii} \vert \leq r_i   \, .
			\}
		\end{equation}
		
		Note that a strict diagonal dominant matrix necessarily has diagonal elements $\vert A_{ii} \vert > 0$-
		Furthermore $A_{ii}>r_i$ for each $i=1,\ldots, M$ and thus
		\begin{equation}
			0 \not\in \{ \lambda \in \mathbb{C} \, | \, \vert \lambda - A_{ii} \vert \leq r_i   \}  \, .
		\end{equation}
		Consequently $0 \not\in \text{spec}\,(A)$.
	\end{proof}
	Note, that the diagonal element of an input gramian $G_{ii}$ is given by 
	\begin{equation}
			G_{ij}(\sigma) = \sum_{j=1}^P T^*_{ij}(\sigma) T_{ji}(\sigma) = \sum_{j=1}^P \vert T_{ji}(\sigma) 
			\vert^2	
	\end{equation}
	and thus it is the zero function if and only if each $T_{ji}$ for $j=1,\ldots,P$ is the zero function. 
	Again consider the dynamic system
	\begin{equation}
			 T(\sigma) \tilde{\vec{w}}(\sigma) = \tilde{\vec{y}}(\sigma) \, .
	\end{equation}
	If the $i$-th column of $T(\sigma)$ is zero for all $\sigma\in\mathbb{C}$, then 
	$\tilde{w}_i$ has no influence on 
	the output at all. 
	It is therefore trivially impossible to gain any information about $\tilde{w}_i$. We want to 
	exclude this trivial case from our investigation and henceforth assume, that the diagonal elements of 
	$G$ are non-zero. By construction it is clear, that each diagonal element $G_{ii}(\sigma)$ is indeed 
	a positive real number.
	
\subsection{Strict Diagonal Dominance}
	Consider a linear dynamic input-output system with input set 
	$S = \{s_1,\ldots, s_M\}$ and input gramian $G:\mathbb{C} 
	\to \mathbb{C}^{M \times M}$ and recall the definition of the 
	coherence between input nodes $s_i$ and $s_j$
	\begin{equation}
		\mu_{ij}(\sigma) := \frac{\vert G_{ij}(\sigma) \vert}{\sqrt{G_{ii}(\sigma)G_{jj}(\sigma)}} \, .
	\end{equation}
	The mutual coherence at $\sigma$ is defined as
	\begin{equation}
		\mu(\sigma) := \max_{i\neq j} \mu_{ij}(\sigma)  \, .
	\end{equation}
	For the special case, that the transfer function $T$ is constant and real, this coincides with the 
	definition of the mutual coherence from \cite{donoho_optimally_2003}. Also from there we take the following 
	theorem and show that it stays valid for the dynamic problem.
	%
	% Theorem
	%
	\begin{proposition}\label{prop:DiagonalDominance}
		Consider a linear dynamic input-output system with input set $S=\{s_1,\ldots,s_M\}$, input gramian 
		$G$. Let $\sigma\in\mathbb{C}$ and and $\mu(\sigma)$ the mutual coherence in $\sigma$.
		If the inequality 
		\begin{equation}
			G_{ii}(s) > \mu(s)
		\end{equation}
		holds for $i=1,\ldots, M$, then $G$ is strictly diagonally dominant at $s$.
	\end{proposition}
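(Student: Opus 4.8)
The plan is to unpack both notions and reduce the claim to one scalar inequality per row. By Lemma~\ref{lemma:zeroeigen}, strict diagonal dominance of $G$ at $s$ means $|G_{ii}(s)| > \sum_{j\neq i}|G_{ij}(s)|$ for every $i=1,\dots,M$. Since $G=T^{*}T$ is a Gram matrix, each diagonal entry $G_{ii}(s)=\sum_{k}|T_{ki}(s)|^{2}$ is a non-negative real number, and the all-zero-column case has already been excluded just before the proposition, so $G_{ii}(s)>0$ and $|G_{ii}(s)|=G_{ii}(s)$. It therefore suffices to bound the off-diagonal row sum $\sum_{j\neq i}|G_{ij}(s)|$ from above by $G_{ii}(s)$, starting from the stated hypothesis $G_{ii}(s)>\mu(s)$.

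First I would feed in Definition~\ref{def:coherence}. Rearranging \eqref{eq:ijcoherence} gives the exact identity $|G_{ij}(s)|=\mu_{ij}(s)\sqrt{G_{ii}(s)\,G_{jj}(s)}$ for $i\neq j$, and by \eqref{eq:mutual_coherence} every off-diagonal entry obeys $|G_{ij}(s)|\le \mu(s)\sqrt{G_{ii}(s)\,G_{jj}(s)}$. To make the comparison with $G_{ii}(s)$ scale-free I would pass to the normalised Gram matrix $\widehat{G}_{ij}(s):=G_{ij}(s)/\sqrt{G_{ii}(s)G_{jj}(s)}$, which is the Gram matrix of the unit-normalised columns of $T(s)$; it has unit diagonal $\widehat{G}_{ii}(s)=1$ and off-diagonals $|\widehat{G}_{ij}(s)|=\mu_{ij}(s)\le\mu(s)$. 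The passage $\widehat{G}=D^{-1/2}GD^{-1/2}$ with $D=\operatorname{diag}(G_{ii})$ is a positive diagonal congruence that leaves the rank of $T(s)$ and the nonsingularity of $G(s)$ unchanged, so $\widehat{G}$ is the right object on which to read diagonal dominance, and that dominance is now governed purely by the coherences $\mu_{ij}(s)$.

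With this reduction the hypothesis $G_{ii}(s)>\mu(s)$ is the lever I would try to use for the final estimate: row $i$ of $\widehat{G}$ is strictly dominated as soon as $\widehat{G}_{ii}(s)=1$ exceeds the accumulated off-diagonal mass $\sum_{j\neq i}\mu_{ij}(s)$, and I would attempt to control that mass by combining the per-entry ceiling $\mu_{ij}(s)\le\mu(s)$ with the size information the hypothesis carries at \emph{both} indices $i$ and $j$, so that off-diagonals cannot simultaneously be large relative to the diagonals they are normalised against. Once $G_{ii}(s)>\sum_{j\neq i}|G_{ij}(s)|$ holds in every row, the proposition follows and plugs directly into Lemma~\ref{lemma:zeroeigen} to exclude $0$ from the spectrum of $G(s)$, which is the invertibility input that the spark bound of Theorem~\ref{theorem:sparkmu} ultimately requires.

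The main obstacle I anticipate is precisely this aggregation step: the coherence hypothesis is a \emph{per-entry} statement, while strict diagonal dominance is a \emph{row-sum} statement, so the real work is to show that bounding each normalised off-diagonal by $\mu(s)$ keeps the entire sum $\sum_{j\neq i}\mu_{ij}(s)$ strictly below the diagonal under $G_{ii}(s)>\mu(s)$ alone. I expect the crux to lie in exploiting the hypothesis at the indices that actually contribute to the row sum rather than only at $i$, and in ruling out the degenerate configuration in which many off-diagonals sit simultaneously near their coherence ceiling; this is exactly the place where, if the per-entry estimate proves insufficient, an additional control on the number of contributing off-diagonal terms would have to enter the argument.
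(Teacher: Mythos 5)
Your setup is correct (the diagonal entries $G_{ii}(s)=\sum_k \vert T_{ki}(s)\vert^2$ are positive reals once zero columns are excluded, and Definition~\ref{def:coherence} gives the per-entry bound $\vert G_{ij}(s)\vert \le \mu(s)\sqrt{G_{ii}(s)G_{jj}(s)}$), but the proof is not actually completed: the aggregation step you flag as ``the main obstacle'' is the entire content of the proposition, and you leave it open. The paper closes it with two ideas missing from your proposal. First, the hypothesis $G_{ii}(s)>\mu(s)$ is read after rescaling $G(\sigma)\leadsto G(\sigma)/\operatorname{tr}G(\sigma)$, so that $\sum_{i=1}^M G_{ii}(\sigma)=1$ and in particular each $G_{ii}(\sigma)<1$; the coherences $\mu_{ij}$ are invariant under this rescaling but the diagonal entries are not, and without the normalization the statement is in fact false --- take $G=\lambda J$ with $J$ the all-ones (rank-one Gram) matrix and $\lambda>1$: then $\mu(s)=1<\lambda=G_{ii}(s)$, yet no row is dominant for $M\ge 2$. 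Second, the hypothesis is used \emph{multiplicatively at both indices}: from $\mu(s)<G_{ii}(s)$ and $\mu(s)<G_{jj}(s)$ one gets $\mu(s)<\sqrt{G_{ii}(s)G_{jj}(s)}$, hence
\begin{equation}
	\vert G_{ij}(s)\vert \le \mu(s)\sqrt{G_{ii}(s)G_{jj}(s)} < G_{ii}(s)\,G_{jj}(s) \, ,
\end{equation}
and summing over $j\neq i$ yields $\sum_{j\neq i}\vert G_{ij}(s)\vert < G_{ii}(s)\sum_{j\neq i}G_{jj}(s) = G_{ii}(s)\left(1-G_{ii}(s)\right) < G_{ii}(s)$. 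This is strict diagonal dominance row by row, and no control on the \emph{number} of large off-diagonal terms is needed --- the trace constraint bounds the off-diagonal mass $\sum_{j\neq i}G_{jj}(s)$ by $1$, which is exactly the mechanism you were searching for.

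Your unit-diagonal detour $\widehat{G}=D^{-1/2}GD^{-1/2}$ also points in the wrong direction for this particular claim. It does preserve nonsingularity, which would suffice for the downstream use in Lemma~\ref{lemma:zeroeigen}, but strict diagonal dominance is not invariant under diagonal congruence, and the proposition asserts dominance of $G$ itself. Worse, normalizing the diagonal to $1$ discards precisely the quantitative information $G_{ii}(s)>\mu(s)$ that the argument must exploit, leaving only the ceiling $\mu_{ij}(s)\le\mu(s)$; as your own concern and the all-ones example show, that ceiling alone cannot keep $\sum_{j\neq i}\mu_{ij}(s)$ below the unit diagonal. The fix is to abandon the congruence and work with the trace-normalized gramian directly, as above.
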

	%
	% Proof
	%	
	\begin{proof}
	First, rescale the system by
	\begin{equation}
			G(\sigma) \leadsto \frac{G(\sigma)}{\text{tr}\, G(\sigma)} 
	\end{equation}			
	which gives the property
	\begin{equation}
		\sum_{i=1}^M G_{ii}(\sigma) = 1 \, . \label{eq:G2}
	\end{equation}
	Note, that $G_{ii}(\sigma)>0$ is already clear and from the equation above 
	$G_{ii}<1$. The case $M=1$ is trivial.
	By definition for all $i,j=1,\ldots,M$ with $i \neq j$
	\begin{equation}
		\vert G_{ij}(\sigma) \vert \leq \mu(\sigma) \sqrt{G_{ii}(\sigma)G_{jj}(\sigma)} \,.
	\end{equation}		
	By assumption of the proposition $\mu(\sigma) \leq G_{ii}(\sigma)$ for all $i=1,\ldots,M$, and since all 
	quantities are non-negative
	\begin{equation}
		\mu < \sqrt{G_{ii}(\sigma)G_{jj}(\sigma)} 
	\end{equation}
	for all $i\neq j$.
	Combine the latter two inequalities and sum over all $j=1,\ldots,$ $i-1,i+1,\ldots,M$ to get 
	\begin{equation}
		\sum_{j\neq i} \vert G_{ij}(\sigma) < G_{ii}(\sigma) \sum_{j\neq i} G_{jj}(\sigma) 
	\end{equation}
	for all $i,\ldots,M$.
	Due to equation \eqref{eq:G2}, the sum on the right hand side is smaller than one, thus for all $i=1,
	\ldots , M$ we find
	\begin{equation}
		\sum_{j\neq i} \vert G_{ij}(\sigma) \vert < G_{ii}(\sigma) 
	\end{equation}
	which is exactly the definition of strict diagonal dominance at $\sigma$.
	\end{proof}

\subsection{A note on the Coherence}
	In contrast to the static problem, where the coherence is a constant, we have here a function in the 
	complex plane. Say, we have a small input set $S=\{s_1,s_2\}$. It is possible, that $s_1$ and $s_2$ 
	are coherent in one regime of 
	$\mathbb{C}$ but will be incoherent in another. Proposition \ref{prop:DiagonalDominance} and 
	lemma \ref{lemma:zeroeigen} show, that a small coherence in a single $\sigma \in \mathbb{C}$ is sufficient 
	to get a high generic rank, and by this invertibility of the system.
	To have a measure whether two nodes $s_i$ and $s_j$ are distinguishable somewhere in $\mathbb{C}$ it 
	makes sense to define the \textit{global coherence} 
	\begin{equation}
		\mu_{ij} := \inf_{\sigma\in\mathbb{C}} \mu_{ij}(\sigma) \, .
	\end{equation}
	It is easy to see that the inequality
	\begin{equation}
		\max_{i\neq j} \inf_{\sigma\in\mathbb{C}} \mu_{ij}(\sigma) \leq   \inf_{\sigma\in\mathbb{C}} 
		\max_{i\neq j} \mu_{ij}(\sigma)
		\label{eq:G3}
	\end{equation}
	holds, formulated in terms of the global coherence $\mu_{ij}$ and mutual coherence $\mu(\sigma)$ 
	\begin{equation}
		\max_{i\neq j} \mu_{ij} \leq \inf_{\sigma\in\mathbb{C}} \mu(\sigma) \, . 
	\end{equation}
	Proposition \ref{prop:DiagonalDominance} holds for the mututal coherence $\mu(\sigma)$ at any 
	$\sigma\in\mathbb{C}$. The least restrictive bound for strict diagonal dominance is obviously achieved for 
	a small mutual coherence $\mu(\sigma)$, so we would like to compute the minimum or infimum of $\mu(\sigma)$,
	the \emph{global mutual coherence}
	\begin{equation}
		\mu := \inf_{\sigma \in \mathbb{C}}  \mu(\sigma) \, .
	\end{equation}	 
	The inequality above indicates, that small global coherences $\mu_{ij}$ do not necessarily lead to a 
	small global mutual coherence $\mu$. More precisely
	\begin{equation}
		\max_{i\neq j}\mu_{ij} \leq \mu \, .
	\end{equation}
 
 	As an example, consider three input nodes $s_1$, $s_2$ and $s_3$ and 
	three subsets of the complex plane $U,V,W\subseteq \mathbb{C}$. It might happen that $\mu_{12}(\sigma)|_{U}$ 
	is small, let us assume it vanishes, and so do $\mu_{23}(\sigma)|_{V}$ and $\mu_{13}(\sigma)|_W$. Thus, 
	all global coherences are vanishingly small. However, if $U$, $V$ and $W$ do not intersect, the 
	global coherences are 
	attained at different regions in the complex plane which means, in $U$ we can distinguish $s_1$ from 
	$s_2$ but 
	we cannot distinguish $s_2$ from $s_3$, and the same for $V$ and $W$.
	
	\subsubsection{Shortest Path Coherence}
		We want to estimate the global mutual coherence by the
		\emph{mutual shortest path coherence}
		\begin{equation}
			\mu^\text{short} := \lim_{\vert \sigma \vert\to \infty} \mu(\sigma) \, .
		\end{equation}
		which is clearly an upper bound for the global mutual coherence.		
		In contrast to the non-commuting infimum and maximum operations, 
		the proposition below shows, that the limit and the maximum operations commute. 
		We find
		\begin{equation}
			\lim_{\vert \sigma\vert\to \infty} \max_{i\neq j} \mu_{ij}(\sigma) =
			\max_{i\neq j} \lim_{\vert \sigma\vert\to \infty}  \mu_{ij}(\sigma) = \max_{i \neq j} 
			\mu_{ij}^\text{short} 
			\, .
		\end{equation}
		The quantity on the right hand side, $\mu_{ij}^\text{short}$ is the \emph{shortest path coherence} 
		between $s_i$ and $s_j$, which already appeared in proposition \ref{prop:short}.
		%
		% Proposition
		%
		\begin{proposition}
			Let $\mu_{ij}(\sigma)$ be the coherence of $s_i$ and $s_j$. Then
			\begin{equation}
				 \lim_{\vert \sigma\vert\to \infty} \max_{i\neq j} \mu_{ij}(\sigma) = 
				 \max_{i\neq j} \lim_{\vert \sigma\vert\to \infty} \mu_{ij}(\sigma) \, .
			\end{equation}
		\end{proposition}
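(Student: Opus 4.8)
The plan is to reduce the claim to the elementary fact that a maximum over a \emph{finite} index set commutes with limits, once each individual limit is known to exist. The crucial structural input is Proposition~\ref{prop:short}, which guarantees that for every pair $i\neq j$ the limit $L_{ij} := \lim_{\vert \sigma\vert \to \infty} \mu_{ij}(\sigma)$ exists: it equals either the shortest-path ratio $\vert F(\eta_{ij'})\vert / \sqrt{F(\eta_{ii'})F(\eta_{jj'})}$ or $0$, depending on whether lemma~\ref{lemma:shortpaths} holds with equality or strict inequality. Since the input set $S=\{s_1,\ldots,s_M\}$ is finite, the collection of pairs $\{(i,j) : i\neq j\}$ is finite as well, and this finiteness is exactly what allows the two operations to be interchanged.

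First I would fix $\epsilon > 0$ and invoke the existence of each $L_{ij}$: for every pair there is a radius $R_{ij} > 0$ such that $\vert \mu_{ij}(\sigma) - L_{ij}\vert < \epsilon$ whenever $\vert \sigma\vert > R_{ij}$. Because there are only finitely many pairs, I may set $R := \max_{i\neq j} R_{ij}$, which is again finite, so that for all $\vert \sigma \vert > R$ every one of these approximations holds simultaneously. Next I would propagate these simultaneous bounds through the maximum. For $\vert \sigma\vert > R$ we have $\mu_{ij}(\sigma) < L_{ij} + \epsilon \leq \max_{k\neq l} L_{kl} + \epsilon$ for each pair, hence $\max_{i\neq j}\mu_{ij}(\sigma) \leq \max_{i\neq j} L_{ij} + \epsilon$. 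Conversely, if $(i^*,j^*)$ attains $\max_{i\neq j} L_{ij}$, then $\max_{i\neq j}\mu_{ij}(\sigma) \geq \mu_{i^*j^*}(\sigma) > L_{i^*j^*} - \epsilon = \max_{i\neq j} L_{ij} - \epsilon$. Combining the two estimates gives $\vert \max_{i\neq j}\mu_{ij}(\sigma) - \max_{i\neq j} L_{ij}\vert < \epsilon$ for all $\vert \sigma\vert > R$, which is precisely the asserted equality of the limit and the maximum.

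There is essentially no hard obstacle here, and that is exactly the point: the argument is deliberately simple, to highlight the contrast with the infimum case discussed above, where the order of $\inf_{\sigma}$ and $\max_{i\neq j}$ genuinely cannot be exchanged (see the inequality~\eqref{eq:G3} and the three-node counterexample). The only place where care is needed is ensuring that the individual limits $L_{ij}$ exist \emph{before} interchanging; this is not automatic for an arbitrary function on $\mathbb{C}$, but is supplied here by Proposition~\ref{prop:short}. The finiteness of the pair index set is the second and equally essential ingredient, since the analogous interchange fails for the uncountable family indexed by $\sigma\in\mathbb{C}$ — which is why this proposition is stated for the limit $\vert\sigma\vert\to\infty$ rather than for a general infimum.
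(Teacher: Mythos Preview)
Your proof is correct and follows essentially the same approach as the paper: both reduce the claim to the elementary fact that a maximum over a finite index set commutes with limits once the individual limits are known to exist (supplied by Proposition~\ref{prop:short}), and both pass to a uniform threshold by taking the maximum over the finitely many per-pair thresholds. Your execution is in fact slightly cleaner than the paper's, which introduces an unnecessary case split of the index set into $I_0$ (indices with strictly smaller limit) and $I_1$ (indices attaining the maximal limit), whereas your direct sandwich argument via $(i^*,j^*)$ achieves the same conclusion in one stroke.
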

		%
		% Proof
		%
		\begin{proof}
			First note, that $\mu_{ij}(\sigma)$ is continuous, $0 \leq \mu_{ij}(\sigma) \leq 1$ and that 
			any singularity of $\mu_{ij}(\sigma)$ is removable. 	
			Furthermore we have already seen that each $\mu_{ij}(\sigma)$ convergent with limit 
			$\mu^\text{short}_{ij}$ as $\vert \sigma \vert \to \infty$. Due to these convenient properties, 
			the following setting is sufficient.
			
			Let $f_a:\mathbb{R}_{\geq 0} \to \mathbb{R}$ a family of continuous and 
			bounded functions with $a \in I$ where 
			$I$ is a finite index set. We want to show that
			\begin{equation}
				\lim_{x \to \infty } \max_{a\in I} f_a(x) = \max_{a\in I} \lim_{x\to\infty} f_a(x) \, .
			\end{equation}
			To see that, let $M_a:=\lim_{x\to\infty} f_a(x)$ and let $a^*\in I$ such that 
			$M_{a^*}=\max_{a\in I}M_a$. By definition of $M_a$, for any $\epsilon>0$ there is an $x_a$ such 
			that for all $x>x_a$ we have
			\begin{equation}
				\vert f_a(x) - M_a\vert < \epsilon \, .
			\end{equation}
			Set $x_0 := \max_{a\in I} x_a$ such that we can use the same epsilon and $x_0$ for all indices $a$. 
			Let us divide $I= I_0 \dot{\cup} I_1$ such that $I_0$ contains all indices with $M_a < M_{a^*}$ and 
			$I_1$ contains this indices with $M_a = M_{a^*}$

			Let us first consider all $I_0$ and let us choose $\epsilon$ such that
			\begin{equation}
				\epsilon < \frac{1}{2} \vert M_{a^*} - M_{a} \vert
			\end{equation}
			for all $a\in I_0$. We can rewrite this as
			\begin{equation}
				M_a + \epsilon < M_{a^*} - \epsilon \, .
			\end{equation}
			For this choice of $\epsilon$ there is an $x_0$ such that $\vert f_a(x) - M_a \vert < \epsilon$ 
			for $x>x_0$, i.e., 
			\begin{equation}						
				f_a(x)\in (M_a - \epsilon, M_a + \epsilon)
			\end{equation}
			an analogously
			\begin{equation}
				f_{a^*}(x)\in (M_{a^*}-\epsilon, M_{a^*}+\epsilon) \, .
			\end{equation}
			Due to our choice of epsilon these two intervals are disjoint and one can see that 
			$f_a(x)< f_{a^*}(x)$ for all $x > x_0$.	Thus for $x$ large enough we can always neglect $I_0$, 
			\begin{equation}
				\max_{a\in I} f_a(x) = \max_{a\in I_1} f_a(x) \, .
			\end{equation}
	
			Let us now focus on $I_1$. Consider 
			\begin{equation}
				r_a(x):=\vert f_a(x) - M_{a^*} \vert 
			\end{equation}
			for $a\in I_1$ and let $a'\in I_1$ such that $f_{a'}(x) = \max_{a\in I_1}f_a(x)$. It is now clear 
			that
			\begin{equation}
				 r_{a'}(x) \leq \max_{a \in I_1} r_a(x) 
			\end{equation}
			for all $x$. Insertion of the definitions yields
			\begin{equation}
				  \left\vert f_{a'}(x) - M_{a^*} \right\vert 
				  \leq \max_{a \in I_1} \vert f_a(x) - M_{a^*} \vert 
			\end{equation}
			and for $x> x_0$ we deduce
			\begin{equation}
			  \left\vert f_{a'}(x) - M_{a^*} \right\vert < \epsilon \, .
			\end{equation}
			Since we can do that for arbitrary small $\epsilon>0$ we find the convergence
			\begin{equation}
				\lim_{x\to\infty} f_{a'}(x) = M_{a^*} \, .
			\end{equation}
			We can now insert the definitions 
			\begin{equation}
				f_{a'}(x)=\max_{a\in I_1} f_a(x) = \max_{a\in I} f_a(x)
			\end{equation}	
			and		
			\begin{equation}
				M_{a^*}=\max_{a\in I} M_a = \max_{a\in I} \lim_{x\to \infty} f_a(x)
			\end{equation}						
			to get the desired result
			\begin{equation}
				\lim_{x\to\infty}   \max_{a\in I} f_a(x) =  \max_{a\in I} \lim_{x\to \infty} f_a(x) \, .
			\end{equation}
			
		\end{proof}
	\subsubsection{Feed-Forward Graphs}
		As one special case we want to consider systems with a feed-forward structure. Such a structure 
		for instance appears in artificial neural networks. From the matroid theory side, such systems correspond 
		to cascade gammoids which were investigated in \cite{mason_class_2018}. 
		
		A gammoid $\Gamma=(\mathcal{L},g,\mathcal{M})$ is called a \emph{cascade} if it has the following 
		structure.		
		The graph $g=(\mathcal{N},\mathcal{E})$ consists of a dijsoint union of $L+1$ node sets
		\begin{equation}
			\mathcal{N} = \mathcal{N}_0 \dot{\cup} \ldots \dot{\cup} \mathcal{N}_L
		\end{equation}				
		called layers. With 
		$(i,l)$ we mean node $i$ from layer $\mathcal{N}_l$. 
		Each edge in $\mathcal{E}$ has the form
		\begin{equation}
			(i,l) \to (j,l+1) \, .
		\end{equation}	
		The input and output ground sets are set to be $\mathcal{L}=\mathcal{N}_0$ and 
		$\mathcal{M}=\mathcal{N}_L$.		
		
		In a cascade, all paths from the input layer to the output layer have the same length. For 
		$(i,0)\in\mathcal{L}$ and $(j,L)\in\mathcal{M}$ we find the 
		transfer function to be
		\begin{equation}
			T_{ji}(\sigma) = \frac{1}{\sigma^L} \sum_{\pi\in \mathcal{P}((i,0),(j,L))} F(\pi)
		\end{equation}
		One can see that
		\begin{equation}
			A_{ji} := \sum_{\pi\in \mathcal{P}((i,0),(j,L))} F(\pi)
		\end{equation}		
		defines the entries of a real constant matrix $A$.		
		The transfer function achieves the simple form
		\begin{equation}
			T(\sigma) = \frac{1}{\sigma^L} A \, 
		\end{equation}
		and the coherence $\mu_{ij}(\sigma)$ turns out to be a constant
		\begin{equation}
			\mu_{ij}(\sigma) = \frac{\langle a_i , a_j \rangle}{\Vert a_i \Vert \Vert a_j \Vert} \, .
		\end{equation}
		The latter equation looks exactly like the coherence one defines for the static compressed 
		sensing problem \cite{donoho_optimally_2003}. It follows, that for the class of cascade gammoids the 
		different notions of coherence all coincide, and consequently the shortest path coherence indeed yields 
		a method to compute the mutual coherence exactly.
	
\subsection{Estimation of the Spark}
		Also this calculation is in analogy to the static problem from \cite{donoho_optimally_2003}.		
		Let $G$ be the gramian of a linear dynamic input-output system with input set $S=\{s_1,\ldots,s_M\}$ 
		and let $\mu(\sigma)$ be its mutual coherence at $\sigma\in\mathbb{C}$. By the definition
		\begin{equation}
			\vert G_{ij}(\sigma) \vert \leq \mu(\sigma) \sqrt{G_{ii}(\sigma)G_{jj}(\sigma)} \, .
		\end{equation}
		Now let $G_{kk}(\sigma)$ be the maximum of all $G_{ii}(\sigma)$ for $i=1,ldots,M$. We replace the 
		square root and sum over all $j=1,\ldots , i-1,i+1,\ldots,M$ to get
		\begin{equation}
		 		\sum_{j \neq i}\vert G_{ij}(\sigma) \vert \leq (M-1) \mu(\sigma)G_{kk}(\sigma) \, ,
		\end{equation}
		where we used that the right hand side is independent of $j$. The gramian is strict 
		diagonal dominant if for all $i=1,\ldots,M$ we have
		\begin{equation}
			 (M-1) \mu(\sigma)G_{kk}(\sigma) < G_{ii}(\sigma) \, .
		\end{equation}
		The latter inequality is therefore sufficient for strict diagonal dominance at $\sigma$ and 
		consequently to invertibility of the dynamic system. We therefore proceed with this inequality by using 
		$G_{ii}(\sigma)/G_{kk}(\sigma) < 1$ to get
		\begin{equation}
			M < \frac{1}{\mu(\sigma)} + 1 \, . \label{eq:G4}
		\end{equation}
		Note, that the gammoid $G$ depends on the choice of the input set $S\subseteq \mathcal{L}$. However, 
		the sufficient condition for strict diagonal dominance only takes $M=\text{card}\, S$ into account. It 
		becomes clear, that for all $\tilde{S}\subseteq \mathcal{L}$ with $\text{card}\,\tilde{S}\leq M$ we get 
		strict diagonal dominance. Since strict diagonal dominance leads to invertibility, it also 
		tells us that $\tilde{S}$ is independent $\Gamma$. By definition of the spark, $\text{spark}\, \Gamma$ 
		is the largest integer such that $\tilde{S}$ is independent in $\Gamma$ whenever 
		$\text{card}\,\tilde{S}\leq \text{spark}\,\Gamma$. Since any $M$ that fulfils \eqref{eq:G4} leads to 
		independence, it becomes clear, that the spark is not smaller than the right hand side of 
		this inequality. 
		Therefore
		\begin{equation}
			\text{spark} \, \Gamma \geq \frac{1}{\mu(\sigma)} + 1 \, .
		\end{equation}
		The tightest bound for the spark is attained for the global mutual coherence $\mu$. 
		In practice, however, we have to use the limit $\vert \sigma \vert \to \infty$  
		and rely on the shortest path coherence.

% Bibliography
\bibliographystyle{plain}
\bibliography{bib_error_localization}

\end{document}